\def \qed {\hfill \vrule height6pt width 6pt depth 0pt}
\def\textmatrix#1&#2\\#3&#4\\{\bigl({#1 \atop #3}\ {#2 \atop #4}\bigr)}
\def\dispmatrix#1&#2\\#3&#4\\{\left({#1 \atop #3}\ {#2 \atop #4}\right)}
\newcommand{\beg}{\begin{equation}}
	\newcommand{\eeg}{\end{equation}}
\newcommand{\ben}{\begin{eqnarray*}}
	\newcommand{\een}{\end{eqnarray*}}
\newtheorem{thm}{Theorem}[section]
\newtheorem{cor}[thm]{Corollary}
\newtheorem{lem}[thm]{Lemma}
\newtheorem{prop}[thm]{Proposition}
\numberwithin{equation}{section} \theoremstyle{definition}
\newtheorem{defn}[thm]{Definition}
\newtheorem{eg}[thm]{Example}
\newcommand{\C}{\mathbb{C}}
\newcommand{\G}{\mathbb{G}_2}
\newcommand{\D}{\mathbb{D}}
\newcommand{\T}{\mathbb{T}}
\newcommand{\N}{\mathbb{N}}
\newcommand{\HS}{\mathcal{H}}
\newcommand{\DC}{\overline{\mathbb{D}}}
\def\textmatrix#1&#2\\#3&#4\\{\bigl({#1 \atop #3}\ {#2 \atop #4}\bigr)}
\def\dispmatrix#1&#2\\#3&#4\\{\left({#1 \atop #3}\ {#2 \atop #4}\right)}
\begin{document}
	
	\title[Toral contractions and $\Gamma$-distinguished $\Gamma$-contractions]{The toral contractions and $\Gamma$-distinguished $\Gamma$-contractions}
	
	\author[Pal and Tomar]{SOURAV PAL AND NITIN TOMAR}
	
	\address[Sourav Pal]{Mathematics Department, Indian Institute of Technology Bombay, Powai, Mumbai-400076, India.} \email{sourav@math.iitb.ac.in, souravmaths@gmail.com}	
	
	\address[Nitin Tomar]{Mathematics Department, Indian Institute of Technology Bombay, Powai, Mumbai-400076, India.} \email{tnitin@math.iitb.ac.in, tomarnitin414@gmail.com}		
	
	\keywords{Distinguished boundary, Distinguished variety, Toral pair of contractions, $\Gamma$-distinguished $\Gamma$-contractions, Isometric dilation, $\Gamma$-isometric dilation}	
	
	\subjclass[2010]{47A20, 47A25, 14M12, 32A60, 32C25}	
	
	\thanks{The first named author is supported in part by the ``Core Research
Grant'' of Anusandhan National Research Foundation (ANRF), Govt. of India, with
Grant No. CRG/2023/005223. The second named author was supported by
the Prime Minister's Research Fellowship of Govt. of India with award number PMRF-1300140 during the course of the paper. At present, the second named author is supported by a fellowship from the first named author's `Core Research Grant (CRG)' of ANRF with Award No. CRG/2023/005223.}	
	
	\begin{abstract}
A pair of commuting Hilbert space contractions $(T_1,T_2)$ is said to be \textit{toral} if there is a polynomial $p \in \mathbb C[z_1,z_2]$ such that its zero set $Z(p)$ defines a distinguished variety in the bidisc $\mathbb D^2$ and $p(T_1,T_2)=0$. A pair of commuting Hilbert space operators $(S,P)$ is said to be a $\Gamma$-\textit{contraction} if the closed symmetrized bidisc
\[
\Gamma=\{ (z_1+z_2,z_1z_2)\,:\, |z_1|, \, |z_2| \leq 1 \}
\]
is a spectral set for $(S,P)$. A $\Gamma$-contraction $(S,P)$ is called $\Gamma$-\textit{distinguished} if $q(S,P)=0$ for some polynomial $q\in \mathbb C[z_1,z_2]$ whose zero set $Z(q)$ gives rise to a distinguished variety in the symmetrized bidisc $\mathbb G_2$. We find necessary and sufficient conditions such that a toral pair of contractions dilates to a toral pair of isometries. In the same spirit, we characterize all $\Gamma$-distinguished $\Gamma$-contractions that admit dilation to $\Gamma$-distinguished $\Gamma$-isometries. The distinguished boundary of a distinguished variety in $\mathbb D^2$ and $\mathbb G_2$ is determined. Examples are provided at places to show the contrasts between the theory of toral contractions and $\Gamma$-distinguished $\Gamma$-contractions.
	\end{abstract}

	\maketitle
	
	\section{Introduction} 
	
	\vspace{0.1cm}
	
\noindent Throughout the paper, all operators are bounded linear operators acting on complex Hilbert spaces. A contraction is an operator with norm not greater than $1$. The space of all operators acting on a Hilbert space $\HS$ is denoted by $\mathcal{B}(\HS)$. The following notations will be used:  $\C$ denotes the complex plane, $\D$ stands for the open unit disk $\{z: |z|<1\}$, $\T$ is the unit circle $\{z : |z|=1\}$ and $\mathbb{E}=\C \setminus \DC$, i.e., the complement of $\DC$. Spectral set, complete spectral set, distinguished boundary and rational dilation are defined in Section \ref{sec02}.
   
\smallskip

A pair of commuting contractions $(T_1,T_2)$ is said to be \textit{toral} if $p(T_1,T_2)=0$ for a polynomial $p$ in $\C[z_1,z_2]$ such that the zero set $Z(p)$ of $p$ intersects the bidisc $\D^2$ and exits through the distinguished boundary of $\DC^2$, the $2$-torus $\T^2$ without intersecting any other part of its topological boundary $\partial \DC^2$, that is, $Z(p)\cap \D^2 \neq \emptyset$ and $Z(p)\cap \partial \DC^2 = Z(p)\cap \T^2$. In the literature \cite{AglerMcCarthy}, such a set $Z(p)\cap \D^2$ is called a \textit{distinguished variety in} $\D^2$. Also, a polynomial $p$ in $\C[z_1,z_2]$ for which $Z(p)\cap \D^2$ is a distinguished variety in $\D^2$ is called a \textit{toral polynomial}. Ando's celebrated theorem \cite{Ando} tells us that every commuting pair of contractions dilates to a commuting pair of isometries. Thus, it is naturally asked if a toral pair of contractions dilates to a toral pair of isometries. Indeed, it remains an open problem for quite sometime, e.g. see \cite{DasII}. In this article, we address this problem by finding necessary and sufficient conditions and thus our first main result is the following.

\begin{thm}\label{main_toral}
		Let $(T_1, T_2)$ be a toral pair of commuting contractions acting on a Hilbert space $\HS$. Then the following are equivalent.
		\begin{enumerate}		
		\item $(T_1, T_2)$ dilates to a toral pair of commuting isometries.
		
        \item There is a Hilbert space $\mathcal{K} \supseteq \HS$, a toral pair of isometries $(D_1, D_2)$ on $\HS^\perp=\mathcal{K}\ominus \HS$ and $C_1, C_2 \in \mathcal{B}(\HS, \HS^\perp)$ such that the following hold:
		\begin{itemize}
			\item[$(i)$] $C_1T_2+D_1C_2=C_2T_1+D_2C_1$
			
			\item[$(ii)$] $C_1^*D_1=C_2^*D_2=0$
			
			\item[$(iii)$] $C_i^*C_i=D_{T_i}^2$ \quad  for  \ $i=1, 2 \,$.			
			\end{itemize}
			
			\item There is a toral polynomial $q(z_1, z_2)$ such that $Z(q) \cap \DC^2$ is a complete spectral set for $(T_1, T_2)$.		
		\end{enumerate}
		Moreover, if $f$ and $g$ are toral polynomials such that $f(T_1, T_2)=g(D_1, D_2)=0$, then $Z(fg) \cap \DC^2$ is a complete spectral set for $(T_1, T_2)$.
	\end{thm}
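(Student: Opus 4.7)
The overall plan is to translate the dilation into its canonical block form with respect to $\mathcal{K} = \HS \oplus \HS^\perp$, read off the three conditions by expanding isometry and commutativity, and then use the resulting toral pair of isometries $(V_1,V_2)$ (together with a subsequent unitary step) to deduce the complete spectral set claim.

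\textbf{Necessity.} Suppose $(T_1, T_2)$ dilates to a toral pair of commuting isometries $(V_1, V_2)$ on $\mathcal{K} \supseteq \HS$. Since $\HS$ is co-invariant for each $V_i$, its orthogonal complement $\HS^\perp$ is invariant, and hence each $V_i$ takes the lower-triangular form
\[ V_i = \begin{pmatrix} T_i & 0 \\ C_i & D_i \end{pmatrix}, \qquad C_i \in \mathcal{B}(\HS, \HS^\perp),\ D_i \in \mathcal{B}(\HS^\perp). \]
Expanding $V_i^*V_i = I$ into block equations yields $T_i^*T_i + C_i^*C_i = I$ (which is condition (3) after rewriting as $C_i^*C_i = D_{T_i}^2$), $C_i^*D_i = 0$ (condition (2)), and $D_i^*D_i = I$ (so $D_i$ is an isometry). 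Likewise, $V_1V_2 = V_2V_1$ splits off $T_1T_2 = T_2T_1$ (given), $D_1D_2 = D_2D_1$, and condition (1). Finally, if the toral polynomial $p$ annihilates $(V_1,V_2)$, then $p(V_1,V_2)$ is lower-triangular with $p(D_1,D_2)$ in its $(2,2)$-slot, forcing $p(D_1,D_2) = 0$, so $(D_1, D_2)$ is itself toral.

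\textbf{Sufficiency and construction of a toral dilation.} Conversely, given the data of the hypothesis, define $V_i$ by the same block formula on $\mathcal{K} := \HS \oplus \HS^\perp$. Conditions (1)--(3), together with the fact that $(D_1, D_2)$ is a commuting isometric pair, guarantee via block multiplication that $V_1, V_2$ are commuting isometries, and the dilation identity $P_\HS V_1^{n_1}V_2^{n_2}|_\HS = T_1^{n_1}T_2^{n_2}$ is immediate from the lower-triangular form. To see that $(V_1, V_2)$ is toral, take toral polynomials $f, g$ with $f(T_1,T_2) = 0$ and $g(D_1,D_2) = 0$ and set $h := fg$. For every polynomial $p \in \C[z_1,z_2]$, the block structure gives
\[ p(V_1, V_2) = \begin{pmatrix} p(T_1, T_2) & 0 \\ X_p & p(D_1, D_2) \end{pmatrix} \]
for some $X_p \in \mathcal{B}(\HS, \HS^\perp)$; in particular both diagonal entries of $h(V_1,V_2)$ vanish, so $h(V_1,V_2)^2 = h^2(V_1,V_2) = 0$ by commutativity. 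As $Z(h^2) = Z(h) = Z(f) \cup Z(g)$ is a distinguished variety in $\D^2$ (union of two distinguished varieties remains one), $h^2$ is toral, establishing that $(V_1, V_2)$ is a toral pair of isometries.

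\textbf{The complete spectral set claim.} Extend or further dilate $(V_1, V_2)$ to a commuting pair of unitaries $(U_1, U_2)$ annihilated by $fg$ (equivalently, by $(fg)^2$); the joint Taylor spectrum of $(U_1, U_2)$ then lies in $Z(fg) \cap \T^2$, which, by the description of the distinguished boundary of a distinguished variety in $\D^2$ obtained earlier in the paper, is precisely the distinguished boundary of $Z(fg) \cap \DC^2$. A standard spectral-theoretic estimate then shows that for every matrix-valued rational function $r$ with poles off $Z(fg) \cap \DC^2$,
\[ \|r(T_1, T_2)\| \leq \|r(U_1, U_2)\| = \sup_{\sigma(U_1, U_2)} \|r\| \leq \sup_{Z(fg) \cap \T^2} \|r\| = \sup_{Z(fg) \cap \DC^2} \|r\|, \]
the last equality being the maximum-modulus principle on the distinguished variety; this is the assertion that $Z(fg) \cap \DC^2$ is a complete spectral set for $(T_1, T_2)$. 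The hardest step is constructing the polynomially annihilated unitary pair $(U_1, U_2)$: a generic Ando/Ito-type unitary dilation of commuting isometries does \emph{not} preserve polynomial identities, so this step must exploit the distinguished-variety structure of $(V_1, V_2)$, presumably via a Wold-type decomposition adapted to $Z(fg) \cap \T^2$ together with a dedicated unitary-dilation result for toral isometries proved elsewhere in the paper.
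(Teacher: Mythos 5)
Your overall architecture matches the paper's proof: pass to a $2\times 2$ lower-triangular block form of the isometric dilation over $\mathcal{K}=\HS\oplus\HS^\perp$, read conditions (1)--(3) off the block equations for $V_i^*V_i=I$ and $V_1V_2=V_2V_1$, reverse the construction for sufficiency, and route the complete spectral set claim through a toral unitary extension of $(V_1,V_2)$ together with the identification $b(Z(fg)\cap\DC^2)=Z(fg)\cap\T^2$ and (one direction of) Arveson's theorem. However, there is one step that fails as written. In the necessity direction you assert that ``$\HS$ is co-invariant for each $V_i$'' for the given dilation $(V_1,V_2)$. For a general (power) dilation this is false: the dilation property $p(T_1,T_2)=P_\HS p(V_1,V_2)|_\HS$ only makes $\HS$ semi-invariant for the algebra generated by $V_1,V_2$, not co-invariant. (Already for $T=0$ on $\C$ and $U$ the bilateral shift on $\ell^2(\Z)$ with $\HS=\C e_0$, one has $P_\HS U^n|_\HS=T^n$ for all $n$ but $U^*e_0\notin\HS$.) Without co-invariance the lower-triangular block form, and hence the whole derivation of (1)--(3), does not get off the ground. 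The paper repairs exactly this point by first cutting down to the minimal isometric dilation space $\mathcal{K}=\overline{\operatorname{span}}\{\widetilde V_1^i\widetilde V_2^j h\}$ (Proposition \ref{prop9.08}, which also preserves the annihilating toral polynomial) and then proving that for the \emph{minimal} dilation $(V_1^*,V_2^*)$ is a co-isometric extension of $(T_1^*,T_2^*)$ (Proposition \ref{prop9.09}). You need to insert this reduction before claiming the block form.

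Two smaller remarks. First, in the sufficiency direction you only show that $h^2=(fg)^2$ annihilates $(V_1,V_2)$; the paper's Lemma \ref{lem910} gets $f(V_1,V_2)g(V_1,V_2)=0$ directly by multiplying the two block matrices in the order that places the vanishing diagonal entries against each other. Since $Z((fg)^2)=Z(fg)$, your weaker conclusion still suffices, but the theorem as stated asserts that $Z(fg)\cap\DC^2$ (not $Z((fg)^2)\cap\DC^2$) is the complete spectral set, and to invoke Theorem \ref{Arveson_toral} cleanly it is tidier to have $fg$ itself annihilate the dilation. Second, the ``hardest step'' you defer --- extending $(V_1,V_2)$ to commuting unitaries still annihilated by $fg$ --- is indeed supplied earlier in the paper (Proposition \ref{toral_extend}), but the mechanism is not a Wold decomposition adapted to $Z(fg)\cap\T^2$: one takes the minimal commuting normal (unitary) extension and applies Lubin's theorem (Lemma \ref{Lubin2}) that $p$ of the minimal c.n.e.\ is the minimal normal extension of $p$ of the subnormal tuple, whence polynomial identities persist. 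With the minimality reduction inserted and that citation made precise, your argument coincides with the paper's.
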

	
We shall prove this theorem in Section \ref{sec07}. In Section \ref{sec06}, we invoke the theory of analytic variety in a domain (e.g. see \cite{Rossi, Stanislaw}) to show that the distinguished boundary of a distinguished variety $Z(p)\cap \D^2$ is equal to $Z(p)\cap \T^2$ and by an application of this we prove in Theorem \ref{Arveson_toral} that a pair of commuting contractions $(T_1,T_2)$ dilates to a toral pair of unitaries if and only if there is a toral polynomial $q$ so that $Z(q) \cap \DC^2$ is a complete spectral set for $(T_1, T_2)$. Also, we obtain characterizations for a toral pair of commuting isometries in Theorem \ref{toral_isometries}. In fact Section \ref{sec06} sets the platform for proving the $(1) \Leftrightarrow (3)$ part of Theorem \ref{main_toral}.
	
\smallskip	

	 We intend to establish analogous results for the symmetrized bidisc. Indeed, this paper is more of a study of the symmetrized bidisc than the bidisc. The symmetrized bidisc $\mathbb G_2$ is a non-convex but polynomially convex domain in $\C^2$ defined by
	\[
	\mathbb{G}_2:=\{(tr(A), det(A)) \ : \ A=[a_{ij}]_{2 \times 2},  \|A\| <1  \} \subseteq \mathbb{C}^2.
	\]	
	The symmetrized bidisc has its origin in $2 \times 2$ spectral Nevanlinna-Pick interpolation. The general $n \times n$ spectral Nevanlinna-Pick interpolation problem states the following: given distinct points $z_1, \dotsc, z_n$ in $\D$ and $n \times n$ matrices $F_1, \dotsc, F_n$ in the spectral unit ball $\Omega_n$ of the space of $n \times n$ matrices $\mathcal{M}_n(\C)$, whether or under what conditions it is possible to find an analytic function $f: \D \to \Omega_n$ such that $f(z_i)=F_i, \ i=1, \dotsc, n$. It is obvious that a $2 \times 2$ matrix $F$ is in $\Omega_2$ if and only if its eigenvalues $\mu_1, \mu_2$ are in $\D$ and this happens if and only if $(\text{tr}(F), \text{det}(F))=(\mu_1+\mu_2, \mu_1\mu_2)$ belongs to $\G$. The symmetrization map $\pi: \C^2 \to \C^2$ is defined by $\pi(z_1, z_2)=(z_1+z_2, z_1z_2)$. The {symmetrized bidisc} $\G$ and its closure $\Gamma$, turn out to be the images of the bidisc $\D^2$ and its closure $\overline{\D}^2$ respectively under $\pi$, that is,
	\begin{align*}
		&\G=\pi(\D^2)=\{(z_1+z_2, z_1z_2) \ : \ |z_1|<1, |z_2|<1 \},\\
		&\Gamma= \overline{\mathbb G}_2=\pi(\DC^2)=\{(z_1+z_2, z_1z_2) \ : \ |z_1|\leq 1, |z_2| \leq 1 \}.
	\end{align*}
	The main motivation behind studying the symmetrized bidisc is that the $2 \times 2$ Nevanlinna-Pick interpolation problem reduces to a similar interpolation problem of $\G$ in the following way.

	\begin{prop}[\cite{Nikolov}, Proposition 1.1]
		Let $\alpha_1, \dotsc, \alpha_n$ be distinct points in $\D$ and let $F_1=\lambda_1I, \dotsc, \\ F_k=\lambda_kI \in \Omega_2$ be scalar matrices. Also let $F_{k+1}, \dotsc, F_n \in \Omega_2$ be non-scalar matrices. Suppose $\phi=(\phi_1, \phi_2) : \D \to \G$ is a holomorphic map such that $\phi(\alpha_j)=\sigma(F_j)=(\text{tr}(F_j), \text{det}(F_j))$ for $j=1, \dotsc, n$. Then there exists a holomorphic map $\psi:\D \to \Omega_2$ satisfying $\phi=\sigma\circ \psi$ and $\psi(\alpha_j)=F_j$ for $j=1, \dotsc, n$ if and only if $\phi_2'(\alpha_j)=\lambda_j\phi_1'(\alpha_j)$ for $j=1, \dotsc, k$.
	\end{prop}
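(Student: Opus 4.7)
The plan is to prove the two directions separately: necessity by direct differentiation, and sufficiency by realising $\psi$ as a traceless perturbation of $(\phi_1/2)I$ whose membership in $\Omega_2$ is automatic.

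\textbf{Necessity.} Suppose $\psi:\D \to \Omega_2$ is holomorphic with $\sigma \circ \psi = \phi$ and $\psi(\alpha_j) = F_j$. Differentiating $\phi = \sigma \circ \psi$ at $\alpha_j$ via the chain rule gives $\phi'(\alpha_j) = D\sigma_{F_j}(\psi'(\alpha_j))$. By Jacobi's formula $D\sigma_A(B) = (\text{tr}(B), \text{tr}(\text{adj}(A)B))$, and at $A = \lambda_j I$ this specialises to $D\sigma_{\lambda_j I}(B) = (\text{tr}(B), \lambda_j \text{tr}(B))$. Equating components forces $\phi_1'(\alpha_j) = \text{tr}(\psi'(\alpha_j))$ and $\phi_2'(\alpha_j) = \lambda_j \text{tr}(\psi'(\alpha_j)) = \lambda_j \phi_1'(\alpha_j)$, as required.

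\textbf{Sufficiency: the ansatz.} I would seek $\psi$ in the form $\psi(z) = (\phi_1(z)/2)\,I + B(z)$, with $B:\D \to \mathcal{M}_2(\C)$ holomorphic and traceless. Using the identity $\text{det}(cI + B) = c^2 + c\,\text{tr}(B) + \text{det}(B)$, the relation $\sigma \circ \psi = \phi$ is equivalent to the single algebraic constraint $\text{det}\,B(z) = \phi_2(z) - \phi_1(z)^2/4$. Since $(\phi_1(z), \phi_2(z)) \in \G$, the eigenvalues of $\psi(z)$, being the roots of $x^2 - \phi_1(z)x + \phi_2(z)$, lie automatically in $\D$, so $\psi(z) \in \Omega_2$ comes for free. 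The interpolation $\psi(\alpha_j) = F_j$ becomes $B(\alpha_j) = F_j - (\text{tr}(F_j)/2)I$; for scalar $F_j = \lambda_j I$ this reads simply $B(\alpha_j) = 0$. Parametrising $B = \begin{pmatrix} x & y \\ w & -x \end{pmatrix}$ converts the determinant constraint into $x^2 + yw = g$, where $g := \phi_1^2/4 - \phi_2$. Here the hypothesis plays its role: $g'(\alpha_j) = \phi_1(\alpha_j)\phi_1'(\alpha_j)/2 - \phi_2'(\alpha_j) = \lambda_j \phi_1'(\alpha_j) - \phi_2'(\alpha_j) = 0$, so $g$ vanishes to order at least two at each $\alpha_j$ with $j \leq k$, which is exactly what is needed to be consistent with the first-order vanishing forced on $x, y, w$ there.

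\textbf{Main obstacle.} The hard part will be producing globally defined holomorphic $x, y, w$ on $\D$ that simultaneously attain the prescribed values at all $\alpha_j$ and satisfy the nonlinear identity $x^2 + yw = g$. My strategy is: first choose $x$ holomorphic on $\D$ interpolating $x(\alpha_j) = ((F_j)_{11} - (F_j)_{22})/2$ with zeros of appropriate order at the scalar points, so that $h := g - x^2$ has zeros of order at least two at each $\alpha_j$ with $j \leq k$; second, exploit that $\D$ is a simply connected Stein manifold so that $h$ factors as $h = yw$ with holomorphic $y, w$ of prescribed zero structures, realising $y(\alpha_j) = (F_j)_{12}$ and $w(\alpha_j) = (F_j)_{21}$ at every node (for scalar points both vanish, consuming one order apiece of the at-least-double zero of $h$). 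The delicate step is arranging $y$ to have no spurious zeros that would obstruct the holomorphy of $w = h/y$; the freedom available in the initial choice of $x$, together with careful bookkeeping of zero multiplicities at the interpolation nodes, is precisely what I would exploit to overcome this, and this is the step demanding the most care in a full write-up.
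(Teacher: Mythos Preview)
The paper does not supply its own proof of this proposition: it is quoted from \cite{Nikolov} purely as motivation for studying $\G$, so there is no in-paper argument to compare against. Your proposal must therefore be judged on its own merits.

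Your necessity argument is correct and complete. For sufficiency, the traceless-perturbation ansatz $\psi = (\phi_1/2)I + B$ and the reduction to $x^2 + yw = g$, together with the observed second-order vanishing of $g$ at the scalar nodes, is exactly the right framework. The obstacle you flag is genuine and is indeed the crux. One refinement your sketch should make explicit: when a non-scalar $F_j$ happens to be diagonal (so $y_j = w_j = 0$ but $x_j \neq 0$), the vanishing of $h = g - x^2$ at $\alpha_j$ to order at least two is \emph{not} automatic from $x(\alpha_j)=x_j$ alone; it must be enforced by also prescribing $x'(\alpha_j) = g'(\alpha_j)/(2x_j)$, a Hermite rather than Lagrange condition on $x$. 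Once $x$ is so chosen (and generically enough that $h \not\equiv 0$), the factorisation $h = yw$ with the required interpolation can be completed cleanly: take $y_0$ to be a polynomial whose only zeros lie at the nodes with $y_j = 0$, each of order equal to the order of $h$ there when $w_j \neq 0$ and of order one when $w_j = 0$; set $w_0 = h/y_0$; then correct by a single non-vanishing factor $u = e^f$, with $f$ interpolating finitely many logarithms, to force $y = y_0 u$ and $w = w_0/u$ to hit the prescribed nonzero values. This resolves your ``spurious zeros'' worry directly: rather than trying to avoid extra zeros of $y$, one confines all of them to the explicit polynomial $y_0$ and lets the non-vanishing $u$ carry the remaining interpolation data.
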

		
 Obviously a bounded domain like $\G$ that has complex-dimension $2$, is much easier to deal with than a norm-unbounded object like $\Omega_2$ which has complex-dimension $4$. On the other hand, $\G$ was the first example of a non-convex domain in which the Caratheodory and Kobayashi distances coincide (see \cite{AglerIV}). The symmetrized bidisc has a rich literature from complex geometric, function theoretic and operator theoretic points of view, e.g. see \cite{AglerI15, AglerII16, AglerIV, Agler-Young-V, Pal8, Bhatt-Pal, PalShalit1} and the references therein.
 
 \medskip
	
It follows from Ando's theorem \cite{Ando} that a pair of commuting operators $T_1,T_2$ are contractions if and only if $\DC^2$ is a spectral set for $(T_1,T_2)$. This leads to considering a commuting pair of operators having $\Gamma$ as a spectral set.

\begin{defn}
A commuting pair of operators $(S,P)$ acting on a Hilbert space $\HS$ is said to be 

\begin{itemize}
 \item[(i)] a $\Gamma$-\textit{contraction} if $\Gamma$ is a spectral set for $(S,P)$, that is, the Taylor joint spectrum $\sigma_T(S,P) \subseteq \Gamma$ and von Neumann's inequality
\[
\|f(S,P)\| \leq \sup_{(s,p) \in \Gamma} \; |f(s,p)| = \|f\|_{\infty, \Gamma}
\]
holds for all rational functions $f=p \slash q$ with $p,q \in \C[z_1,z_2]$ and $q$ having no zeros in $\Gamma$ ;

\item[(ii)] a $\Gamma$-\textit{unitary} if $S,P$ are normal operators and $\sigma_T(S,P) \subseteq b\Gamma$ ;

\item[(iii)] a $\Gamma$-\textit{isometry} if there is a Hilbert space $\widetilde{\HS} \supseteq \HS$ and a $\Gamma$-unitary $(\widetilde{S}, \widetilde{P})$ acting on $\widetilde{\HS}$ such that $\HS$ is a common invariant subspace for $S,P$ and that $S=\widetilde{S}|_{\HS}, \; P=\widetilde P|_{\HS}$;

\item[(iv)] a \textit{pure} $\Gamma$-\textit{contraction} if $(S,P)$ is a $\Gamma$-contraction and $P$ is a pure contraction, i.e., ${P^*}^n \rightarrow 0$ strongly as $n \rightarrow \infty $.
 
\end{itemize} 

\end{defn}

\noindent The primary objects of study in this article are distinguished variety, polynomial defining a distinguished variety in $\D^2$ and $\mathbb G_2$ and operator pairs annihilated by polynomials. A commuting operator pair $(T_1,T_2)$ that is annihilated by a polynomial $q \in \C[z_1,z_2]$, i.e., satisfying $q(T_1,T_2)=0$ is called an \textit{algebraic pair}. Below we define a distinguished variety in a domain in $\C^n$.
\begin{defn}
Given a bounded domain $\Omega$ in $\C^n$, a nonempty set $V \subseteq \Omega$ is said to be a \textit{distinguished variety} in $\Omega$ if there is an algebraic variety $W \subset \C^n$ such that $V=W \cap \Omega $ and $W \cap \partial \overline{\Omega}= W \cap b\overline{\Omega}$, where $b\overline{\Omega}$ is the distinguished boundary of $\overline{\Omega}$.	
\end{defn}
The distinguished varieties in domains like bidisc, symmetrized bidisc, tetrablock or even more generally in polydisc and symmetrized polydisc have been extensively studied in past two decades, e.g. see \cite{AglerMcCarthy, AglerKneseMcCarthy2, Bhatt-Sau-Kumar, Das, DasII, Dritschel, Knese9, PalShalit1, PalS12, PalS, PalS11}. One of the most pioneering works in operator theory is And\^{o}'s inequality \cite{Ando}, which states that if $(T_1, T_2)$ is a commuting pair of contractions, then for every polynomial $p \in \C[z_1, z_2]$,
 \[
 \|p(T_1, T_2)\|\leq \sup\{|p(z_1, z_2)| : (z_1, z_2) \in \DC^2 \}.
 \] 
 The main motivation behind studying distinguished varieties is the following improvement of Ando's inequality by Agler and McCarthy. 
\begin{thm}[Agler and McCarthy, \cite{AglerMcCarthy}]
	Let $T_1, T_2$ be two commuting contractive matrices, neither of which has an eigenvalue of unit modulus. Then  there is a distinguished variety $V$ in $\D^2$ such that 
	\[
	\|p(T_1, T_2)\| \leq \sup_{(z_1,z_2) \in V} \,\, |p(z_1, z_2)| \quad \text{ for every } \, \, p \in \C[z_1, z_2].
	\]
	
\end{thm}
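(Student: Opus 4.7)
The plan is to construct the distinguished variety $V$ explicitly from $(T_1,T_2)$ via a dilation argument, and then to deduce the inequality using a maximum principle on $V$. The no-unimodular-eigenvalue hypothesis is essential: combined with the finite-dimensionality of $\HS$, it forces the spectra of $T_1$ and $T_2$ to lie in $\D$, so both operators are of class $C_{\cdot 0}$ and their minimal isometric dilations can be realised as pure shifts of finite multiplicity on vector-valued Hardy spaces.

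First I would produce a rational inner model. By And\^{o}'s theorem, $(T_1,T_2)$ admits a commuting isometric dilation $(V_1,V_2)$ on some $\mathcal{K} \supseteq \HS$. The Sz.-Nagy--Foias functional model identifies $V_1$ with multiplication by $z$ on $H^2(\mathcal{E})$ for a finite-dimensional coefficient space $\mathcal{E}$, with $\HS$ realised as the orthogonal complement of an $M_z$-invariant subspace. The commutativity $V_1V_2=V_2V_1$ forces $V_2$ to be multiplication by a bounded analytic function $\Psi:\D\to \mathcal{B}(\mathcal{E})$, and isometricity makes $\Psi$ an inner function. The analogous spectral hypothesis on $T_2$ then allows $\Psi$ to be chosen \emph{rational} inner, so that $\Psi(z)$ is unitary-valued on $\T$.

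Next set
\[
W=\{(z,w)\in\C^2:\det(wI_\mathcal{E}-\Psi(z))=0\}, \qquad V=W\cap\D^2.
\]
Since $\Psi(z)$ is contractive for $z\in\D$ and unitary for $z\in\T$, every root $w$ of $\det(wI_\mathcal{E}-\Psi(z))$ satisfies $|w|<1$ when $|z|<1$ and $|w|=1$ when $|z|=1$, which yields $W\cap\partial\DC^2=W\cap\T^2$ and certifies $V$ as a distinguished variety. For the inequality, observe that $p(T_1,T_2)$ is the compression to $\HS$ of $p(M_z,M_\Psi)$ on $H^2(\mathcal{E})$, so
\[
\|p(T_1,T_2)\|\leq \|p(M_z,M_\Psi)\|=\sup_{z\in\D}\|p(z,\Psi(z))\|.
\]
By the maximum principle this supremum is attained in the limit $|z|\to 1$, and for $z\in\T$ the matrix $\Psi(z)$ is unitary, hence normal. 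The spectral mapping theorem then gives $\|p(z,\Psi(z))\|=\max\{|p(z,w)|:w\in\sigma(\Psi(z))\}\leq \sup_V|p|$, because the eigenvalues of $\Psi(z)$ trace out the fibre of $V\cap\T^2$ above $z$ as $z$ runs through $\T$.

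The main obstacle is producing the rational inner function $\Psi$ realising the second component of the isometric dilation with exactly the right boundary behaviour. In general the analytic extension of $T_2$ along the isometric dilation of $T_1$ need only be contractive rather than inner, and there is no reason for its values on $\T$ to be unitary. The no-unimodular-eigenvalue hypothesis on the finite matrices $(T_1,T_2)$ is precisely tailored to remedy this: it permits a finite-rank reduction in the Sz.-Nagy--Foias model and forces $\Psi(z)$ to be unitary at every boundary point of $\D$. This is what endows $V$ with its distinguished character and gives the inequality its algebraic sharpness.
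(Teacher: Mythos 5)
The paper does not prove this statement; it is imported verbatim from Agler and McCarthy \cite{AglerMcCarthy} as motivation, so the only meaningful comparison is with the argument in that reference. Your outline follows the same overall strategy as theirs: dilate $(T_1,T_2)$ to a commuting pair of isometries modeled as $(M_z,M_\Psi)$ on $H^2(\mathcal{E})$ with $\dim\mathcal{E}<\infty$ and $\Psi$ a rational matrix inner function, let $V$ be the determinantal variety $\{(z,w):\det(wI-\Psi(z))=0\}\cap\D^2$, and conclude via subharmonicity of $\log\|p(z,\Psi(z))\|$ together with the spectral theorem for the unitary matrices $\Psi(z)$, $z\in\T$. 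That last part of your argument is essentially sound, modulo two points you gloss over: to certify $V$ as distinguished you must also show $W\cap(\D\times\T)=\emptyset$, i.e.\ that $\Psi(z)$ has no unimodular eigenvalue for $z\in\D$ (this requires ruling out a constant unitary direct summand of $\Psi$), and the boundary eigenvalue pairs lie in $W\cap\T^2$, so you need $W\cap\T^2\subseteq\overline{W\cap\D^2}$ before replacing $\sup_{\overline V}|p|$ by $\sup_V|p|$.

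The genuine gap sits exactly where you yourself locate ``the main obstacle,'' and you do not close it. And\^{o}'s theorem produces \emph{some} commuting isometric dilation $(V_1,V_2)$, but nothing in that construction makes $V_1$ a pure shift of finite multiplicity, nor $V_2$ multiplication by a \emph{rational} inner function: a generic And\^{o} dilation has a unitary part and infinite multiplicity, and even an inner function with values in a finite-dimensional space need not be rational or continuous up to $\T$. Asserting that the no-unimodular-eigenvalue hypothesis ``permits a finite-rank reduction in the Sz.-Nagy--Foias model and forces $\Psi(z)$ to be unitary at every boundary point'' is a restatement of the theorem's hardest step, not a proof of it. In \cite{AglerMcCarthy} this step is the content of the key lemma: an explicit one-step co-extension of $(T_1,T_2)$ carried out entirely on a finite-dimensional space, obtained by matching defect spaces with a suitably constructed unitary; finite-dimensionality of the resulting model is what makes the symbol a finite Blaschke--Potapov product, hence rational inner, hence continuous and unitary-valued on all of $\T$, and what makes $\det(wI-\Psi(z))=0$ an algebraic (rather than merely analytic) variety. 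Without that construction, or an equivalent Berger--Coburn--Lebow argument, the set you produce is not known to be a distinguished variety in the sense required, and the proof is incomplete.
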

A similar result for the symmetrized bidisc was proved by Pal and Shalit in \cite{PalShalit1}. Also, an explicit description of all distinguished varieties in the bidisc and the symmetrized bidisc were given in \cite{AglerMcCarthy} and \cite{PalShalit1} respectively. To go parallel with the bidisc, we shall use the following terminologies for the symmetrized bidisc.

\begin{defn}
A polynomial $p \in \C[z_1, z_2]$ is said to be \textit{$\Gamma$-distinguished} if its zero set $Z(p)$ defines a distinguished variety with respect to $\G$, i.e., $Z(p) \cap \G \ne \emptyset$ and $Z(p) \cap \partial \Gamma =Z(p) \cap b\Gamma$. Also, a $\Gamma$-contraction $(S, P)$ is called \textit{$\Gamma$-distinguished} if it is annihilated by a $\Gamma$-distinguished polynomial, i.e., there is a $\Gamma$-distinguished polynomial $q \in \C[z_1, z_2]$ such that $q(S, P)=0$.		
	\end{defn}
		
Every $\Gamma$-contraction dilates to a $\Gamma$-isometry as was shown in \cite{AglerI15} and \cite{Pal8}. Below we present necessary and sufficient conditions such that a $\Gamma$-distinguished $\Gamma$-contraction dilates to a $\Gamma$-distinguished $\Gamma$-isometry. This is an analogue of Theorem \ref{main_toral} for the symmetrized bidisc and is another main result of this paper.	
	
	\begin{thm}\label{thm814}	
		Let $(S, P)$ be a $\Gamma$-distinguished $\Gamma$-contraction acting on a Hilbert space $\HS$. Then the following are equivalent.
	\begin{enumerate}
	
	\item $(S, P)$ dilates to a $\Gamma$-distinguished $\Gamma$-isometry.

	\item There is a Hilbert space $\mathcal{K}$ containing $\HS$, a $\Gamma$-distinguished $\Gamma$-isometry $(D_1, D_2)$ on $\HS^{\perp}=\mathcal{K}\ominus \HS$ and $C_1, C_2 \in \mathcal{B}(\HS, \HS^\perp)$ such that the following hold:	
	\begin{equation}\label{eqn1002}
			\begin{split}
					& (i) \ \ C_1P+D_1C_2=C_2S+D_2C_1 \ \ (ii)  \ \ S-S^*P=C_1^*C_2  \\ & (iii)  \ \ C_2^*D_2=0  \ \ (iv) \ \ C_2^*C_2=D_P^2 \ \ (v) \ \ C_1=D_1^*C_2.
			\end{split}
		\end{equation}	
	
	\item There is a $\Gamma$-distinguished polynomial $p(z_1, z_2)$ such that $Z(p) \cap \Gamma$ is a complete spectral set for $(S,P)$.
\end{enumerate}		
 Moreover, if $f$ and $g$ are $\Gamma$-distinguished polynomials that annihilate $(S, P)$ and $(D_1, D_2)$ respectively, then $Z(fg) \cap \Gamma$ is a complete spectral set for $(S, P)$. 	
\end{thm}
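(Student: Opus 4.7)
The plan is to prove both directions by writing any $\Gamma$-isometric dilation of $(S,P)$ in block matrix form with respect to $\mathcal{K} = \HS \oplus \HS^\perp$ and translating the structural equations defining a $\Gamma$-isometry into algebraic relations among the blocks. For the necessity direction, let $(T,V)$ be a $\Gamma$-distinguished $\Gamma$-isometric dilation of $(S,P)$. Since $(T,V)$ is isometric, $\HS^\perp$ is invariant for $T$ and $V$, so they take the lower-triangular form
\[
T = \begin{pmatrix} S & 0 \\ C_1 & D_1 \end{pmatrix}, \qquad V = \begin{pmatrix} P & 0 \\ C_2 & D_2 \end{pmatrix}.
\]
The pair $(D_1, D_2) = (T|_{\HS^\perp}, V|_{\HS^\perp})$ is the restriction of a $\Gamma$-isometry to an invariant subspace, hence itself a $\Gamma$-isometry; if $q$ is a $\Gamma$-distinguished polynomial annihilating $(T,V)$, restriction yields $q(D_1, D_2) = 0$, so $(D_1, D_2)$ is also $\Gamma$-distinguished. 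I would then derive the five identities by expanding: $V^*V = I$ block-by-block (giving (3) and (4)); the fundamental Agler--Young relation $T = T^*V$ (giving (2) and (5), with $C_1^*D_2 = 0$ falling out automatically); and the off-diagonal entry of $TV = VT$ (giving (1)).

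For sufficiency, I would define $T$ and $V$ on $\mathcal{K} := \HS \oplus \HS^\perp$ by the same block formulae and verify in turn $TV = VT$ (from (1) and the commutativities of $(S,P)$ and $(D_1,D_2)$); $V^*V = I$ (from (3), (4), together with $D_2^*D_2 = I$); and $T = T^*V$ (from (2), (5), the identity $D_1 = D_1^*D_2$ for the $\Gamma$-isometry $(D_1,D_2)$, plus the derived equality $C_1^*D_2 = C_2^*D_1D_2 = C_2^*D_2D_1 = 0$ which combines (5) with commutativity of $D_1,D_2$ and (3)). Invoking the Agler--Young characterization of $\Gamma$-isometries then promotes $(T,V)$ to a bona fide $\Gamma$-isometry, which is manifestly a dilation of $(S,P)$. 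To see $(T,V)$ is $\Gamma$-distinguished, the block lower-triangular structure forces $g(T,V)$ to have lower-right entry $g(D_1,D_2) = 0$ and $f(T,V)$ to have upper-left entry $f(S,P) = 0$; a direct $2\times 2$ block multiplication gives $g(T,V)f(T,V) = 0$, and since $f(T,V)$ and $g(T,V)$ commute, $(fg)(T,V) = 0$ with $fg$ a $\Gamma$-distinguished polynomial.

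For the complete spectral set claim, the dilation inequality yields $\|[q_{ij}(S,P)]\| \leq \|[q_{ij}(T,V)]\|$ for every matrix polynomial $[q_{ij}]$. The Taylor joint spectrum of $(T,V)$ sits inside $\Gamma$ (since $(T,V)$ is a $\Gamma$-contraction) and inside $Z(fg)$ (via polynomial functional calculus applied to $(fg)(T,V) = 0$), hence inside $Z(fg) \cap \Gamma$. Passing to the minimal $\Gamma$-unitary extension of $(T,V)$ and invoking the spectral theorem for the resulting commuting normal pair then bounds $\|[q_{ij}(T,V)]\|$ by $\sup\{\|[q_{ij}(s,p)]\| : (s,p) \in Z(fg) \cap \Gamma\}$, establishing that $Z(fg) \cap \Gamma$ is a complete spectral set for $(S,P)$.

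The delicate points I anticipate are, first, invoking the correct characterization of $\Gamma$-isometries (involving not only the algebraic relations $V^*V = I$, $TV = VT$, $T = T^*V$ but also an appropriate spectral or numerical-radius bound on $T$) to certify that the block operator constructed in the sufficiency direction is a genuine $\Gamma$-isometry rather than a pair satisfying only the fundamental-looking equations; and second, arranging for the minimal $\Gamma$-unitary extension of $(T,V)$ to remain annihilated by $fg$, so that its Taylor spectrum lies in $Z(fg) \cap b\Gamma$ and the spectral-supremum bound used for the complete spectral set conclusion is legitimately available. The necessity direction is mostly bookkeeping once one commits to block matrices, but the sufficiency direction and the spectral-set refinement are where the technical content resides.
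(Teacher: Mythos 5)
Your overall architecture coincides with the paper's: a $2\times 2$ lower-triangular block decomposition of the dilation, translation of the $\Gamma$-isometry axioms $TV=VT$, $T=T^*V$, $V^*V=I$ into the five equations, the derivation $C_1^*D_2=C_2^*D_1D_2=C_2^*D_2D_1=0$, and reduction of the spectral-set claim to the $\Gamma$-distinguished analogue of Arveson's theorem. However, there are places where your argument either asserts something false or leaves the real work undone. First, in the necessity direction your justification for the lower-triangular form is wrong: being an isometric dilation does not make $\HS^\perp$ invariant for $T$ and $V$. (Take $V$ the unilateral shift on $\ell^2$ and embed $\HS=\C$ as the span of $e_1$; then $V$ dilates the zero operator on $\HS$, yet $Ve_0=e_1\notin\HS^\perp$.) By Sarason's lemma $\HS$ is only semi-invariant for a general dilation; to make $\HS^\perp$ invariant you must first cut down to the minimal dilation space $\mathcal{K}=\overline{\operatorname{span}}\{\widetilde{T}^i\widetilde{V}^jh : h\in\HS,\ i,j\ge 0\}$, check that the annihilating polynomial survives this restriction, and then run the standard argument that a minimal isometric dilation is a co-isometric extension of the adjoint. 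This is exactly what Propositions \ref{prop1012} and \ref{prop1013} of the paper do, and it cannot be skipped.

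Second, in the sufficiency direction the relations $TV=VT$, $T=T^*V$, $V^*V=I$ alone do not make $(T,V)$ a $\Gamma$-isometry: the pair $(cI,I)$ with $c$ real and large satisfies all three. You correctly flag that a norm or spectral bound on $T$ is needed, but you do not supply it, and it is not automatic from the block form, since $\|T\|\le 2$ does not follow from $\|S\|\le 2$ and $\|D_1\|\le 2$ for a triangular block matrix. The paper closes this by invoking the spectral-radius form of the $\Gamma$-isometry criterion from \cite{Pal8} together with the inclusion $\sigma(T)\subseteq\sigma(S)\cup\sigma(D_1)$ for triangular operator matrices from \cite{Hong}, giving $r(T)\le 2$. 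Third, for the complete spectral set conclusion you need (i) that the $\Gamma$-distinguished $\Gamma$-isometry $(T,V)$ extends to a $\Gamma$-unitary still annihilated by $fg$, which rests on Lubin's lemma on minimal commuting normal extensions (Proposition \ref{prop8.1}), and (ii) the identification $b(Z(fg)\cap\Gamma)=Z(fg)\cap b\Gamma$ together with polynomial convexity of $Z(fg)\cap\Gamma$, so that Arveson's theorem applies and the matrix-polynomial inequality upgrades to all matricial rational functions. You name both issues as delicate points but resolve neither; they are precisely where Theorems \ref{dist_var} and \ref{Main} of the paper carry the load.
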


We prove this theorem in Section \ref{sec08}. In Section \ref{sec06}, we determine the distinguished boundary of a distinguished variety $Z(p) \cap \Gamma$ in the symmetrized bidisc. More precisely, we show that $b(Z(p) \cap \Gamma)= Z(p) \cap b\Gamma$. It is well-known that a $\Gamma$-isometry naturally extends to a $\Gamma$-unitary. We show in Section \ref{sec08} that every $\Gamma$-distinguished $\Gamma$-isometry extends to a $\Gamma$-distinguished $\Gamma$-unitary. A $\Gamma$-contraction $(S, P)$ that dilates to a $\Gamma$-distinguished $\Gamma$-unitary (or $\Gamma$-isometry) $(T, U)$ must be $\Gamma$-distinguished. Indeed, if $p$ is a $\Gamma$-distinguished polynomial such that $p(T, U)=0$, then  $p(S,P)=P_\HS p(T, U)|_\HS=0$. In Example \ref{3.14}, we construct a $\Gamma$-contraction that is not $\Gamma$-distinguished. 

\smallskip

The theory of $\Gamma$-distinguished $\Gamma$-contractions is not a straight-forward analogue of the theory of toral contractions. In fact, these two classes of commuting operators exhibit behaviours that are in stark contrast with each other. For example, Agler, Knese and McCarthy \cite{AglerKneseMcCarthy2} found the following necessary condition for an algebraic pair of commuting pure isometries. 
	
	\begin{thm}[\cite{AglerKneseMcCarthy2}, Theorem 1.8] \label{1.2} For every algebraic pair of commuting pure isometries $(V_1, V_2)$, there is a polynomial $q \in \C[z_1, z_2]$ such that $Z(q) \subseteq \D^2 \cup \T^2 \cup \mathbb{E}^2$ and $q(V_1, V_2)=0$.
	\end{thm}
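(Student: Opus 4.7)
The plan is to invoke the Berger--Coburn--Lebow (BCL) functional model for commuting pure isometries. Since the product $V_1V_2$ of two commuting pure isometries is again a pure isometry, the pair $(V_1,V_2)$ is unitarily equivalent to $(M_{\Phi_1},M_{\Phi_2})$ acting on a vector-valued Hardy space $H^2_{\mathcal E}(\D)$, where $\Phi_1,\Phi_2\in H^\infty_{\mathcal B(\mathcal E)}(\D)$ are operator-valued inner functions satisfying the twisting relation $\Phi_1(z)\Phi_2(z)=\Phi_2(z)\Phi_1(z)=zI_{\mathcal E}$ for every $z\in\D$. Under this identification the algebraic hypothesis $p(V_1,V_2)=0$ becomes the pointwise operator identity $p(\Phi_1(z),\Phi_2(z))=0$ on $\mathcal E$ for every $z\in\D$, so for each fixed $z$ the joint spectrum of the commuting pair $(\Phi_1(z),\Phi_2(z))$ lies in the planar slice $Z(p)\cap\{(z_1,z_2) : z_1z_2=z\}$.

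Next I would build $q$ from the spectral data of the symbols. After reducing, via a cyclic-subspace argument applied to the algebraic relation $p(V_1,V_2)=0$, to the case where the BCL multiplicity space $\mathcal E$ is finite-dimensional, one may set
\[
q(z_1,z_2):=\det\bigl(z_1\, I_{\mathcal E}-\Phi_1(z_1z_2)\bigr),
\]
cleared of any rational denominator in $z_1z_2$ so that $q\in\C[z_1,z_2]$. Applied fibrewise, the Cayley--Hamilton theorem yields $q(M_{\Phi_1},M_{\Phi_2})=0$, hence $q(V_1,V_2)=0$. The zero set of $q$ is then precisely the collection of pairs $(z_1,z_2)$ with $z_1\in\sigma(\Phi_1(z_1z_2))$, and the twisting relation $\Phi_2(w)=w\,\Phi_1(w)^{-1}$ simultaneously pins the partner coordinate to $z_2\in\sigma(\Phi_2(w))$, where $w=z_1z_2$.

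The main obstacle is verifying $Z(q)\subseteq\D^2\cup\T^2\cup\mathbb E^2$, which rests on a radial spectral dichotomy for inner operator functions: $\sigma(\Phi_i(w))\subseteq\T$ whenever $w\in\T$ (since $\Phi_i(w)$ is then unitary on $\mathcal E$) and $\sigma(\Phi_i(w))\subseteq\overline{\D}$ whenever $w\in\D$, with touching of $\T$ only on reducing subspaces on which $\Phi_i(w)$ acts unitarily. A fibrewise Wold-type decomposition of each $\Phi_i$ peels off these reducing unitary summands, and on the remaining purely contractive part one obtains the strict inclusion $\sigma(\Phi_i(w))\subseteq\D$ for $w\in\D$ and $\sigma(\Phi_i(w))\subseteq\mathbb E$ for $|w|>1$ via analytic continuation through the inversion $w\mapsto 1/\bar w$. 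The constraint $z_1z_2=w$ then forces $(z_1,z_2)$ into one of the diagonal sectors $\D^2$, $\T^2$ or $\mathbb E^2$ and excludes every mixed case. The delicate technical work lies in carrying out the finite-dimensional reduction, performing the Wold decomposition of the operator-valued symbols cleanly, and confirming that the determinantal polynomial survives these reductions without degenerating to the zero polynomial.
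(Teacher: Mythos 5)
First, note that the paper does not prove this statement at all: it is quoted verbatim from Agler--Knese--McCarthy \cite{AglerKneseMcCarthy2}, and the closest the paper comes to a proof is the parallel machinery of Section \ref{sec03} (cyclic reduction, minimal square-free polynomial, determinantal annihilator), which models the pair over the shift $M_z$ on $H^2(\C^n)$ directly. Measured against that template, your proposal has a concrete gap that is not a matter of missing technical detail.

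The determinantal polynomial you construct from the BCL symbols, $q(z_1,z_2)=\det\bigl(z_1 I_{\mathcal E}-\Phi_1(z_1z_2)\bigr)$, is in general \emph{not} inner toral. Take $V_1=V_2=S$, the unilateral shift on $H^2$. Then $\mathcal E=\ker S^{*2}$ is two-dimensional, the BCL symbol is $\Phi_1(w)=\bigl(\begin{smallmatrix}0&w\\1&0\end{smallmatrix}\bigr)$, and your polynomial is $q(z_1,z_2)=z_1^2-z_1z_2=z_1(z_1-z_2)$. Its zero set contains the line $\{z_1=0\}$, hence points such as $(0,2)\notin \D^2\cup\T^2\cup\mathbb{E}^2$. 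The root cause is visible in your own argument: the ``pinning'' $z_2\in\sigma(\Phi_2(w))$ via $\Phi_2(w)=w\,\Phi_1(w)^{-1}$ requires $z_1\neq 0$, and precisely at $z_1=0$ the second coordinate is left completely unconstrained. Whenever the BCL projection $P$ is nonzero one has $\det\Phi_1(0)=0$, so $z_1$ divides $q$ and the entire line $\{0\}\times\C$ sits inside $Z(q)$. One can try to repair this by stripping the maximal power $z_1^k$ (using injectivity of the isometry $V_1$ to keep the annihilation), but you would then still have to exclude zeros $(0,\beta)$ with $|\beta|=1$ of the stripped polynomial, and nothing in your argument addresses this. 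The established route avoids the problem entirely by modelling $V_2$ itself as $M_z$ on $H^2(\C^n)$ (after the cyclic reduction that bounds $\dim\ker V_2^*$, as in Lemma \ref{lem3.6} of this paper) and taking $\det\bigl(z_1I-\Phi(z_2)\bigr)$, so that the second coordinate is the free Hardy-space variable and no spurious component appears.

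A secondary but genuine issue is that the reduction to finite-dimensional $\mathcal E$ is only gestured at. Different cyclic subspaces produce different determinantal annihilators, and assembling them into a single polynomial $q$ for the whole space requires the minimal-polynomial and square-free divisibility arguments that occupy Lemmas \ref{3.9}--\ref{3.12} and Theorem \ref{thm:0312} in the paper's symmetrized-bidisc analogue; this is real work, not a routine reduction. Your maximum-modulus argument for the spectral dichotomy (unimodular eigenvalues of $\Phi_i(w)$ for $w\in\D$ force a unitary direct summand, contradicting pureness) is correct and is indeed the right tool for the $\D^2$ and $\mathbb{E}^2$ sectors once the construction itself is fixed.
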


 Such a polynomial $q$ is referred to as an \textit{inner toral} polynomial in \cite{AglerKneseMcCarthy2}. These polynomials have been further studied in detail by Knese in \cite{Knese9}. Evidently, an inner toral polynomial $q$ satisfies $Z(q) \cap \partial \D^2 =Z(q)\cap \T^2$. Thus, multiplication of an inner toral polynomial $q$ with any toral polynomial $q_0$ makes $qq_0$ a toral polynomial. It then follows from Theorem \ref{1.2} that every algebraic pair of commuting pure isometries is a toral pair. An analogue of this result does not hold for the symmetrized bidisc. In Example \ref{3.14}, we construct an algebraic pure $\Gamma$-isometry that cannot be annihilated by any $\Gamma$-distinguished polynomial. The underlying reason is that a (pure) $\Gamma$-isometry may or may not be the symmetrization of a commuting pair of (pure) isometries. In Section \ref{sec04}, we characterize all $\Gamma$-isometries which arise as the symmetrization of a commuting pair of isometries. 
  

	\section{Preliminaries}\label{sec02}
	
	
	\noindent 	In this Section, we recall a few basic concepts from the literature. These facts will be used frequently throughout the paper. We begin with the definition of spectral and complete spectral set.		
				
	\subsection{Spectral set and complete spectral set.}
	For a compact subset $X$ of $\C^n$, let
	$Rat(X)$ be the algebra of
	rational functions $p\slash q$, where $p,q \in \C[z_1, \dots , z_n]$ such that $q$
	does not have any zeros in $X$. Let $\underline{T}=(T_1, \dotsc, T_n)$ be a commuting tuple of operators on a Hilbert space $\mathcal{H}$. The set $X \subseteq \mathbb{C}^n$ is said to be a \textit{spectral set} for $\underline{T}$ if the Taylor joint spectrum $\sigma_T(\underline{T})$ of $\underline{T}$ is contained in $X$ and 
	\begin{equation}\label{vNe}
	\|f(\underline{T})\| \leq \|f\|_{\infty, X}=\sup\{|f(\xi)| \ : \ \xi \in X  \} 
	\end{equation}
	for every $f \in Rat(X)$. If (\ref{vNe}) holds for every matricial rational function $F=[f_{ij}]_{m \times m}$, then $X$ is said to be a \textit{complete spectral set} for $\underline{T}$. Note that for a matricial rational function $F=[f_{ij}]_{m \times m}$, where each $f_{ij} \in \, Rat(X)$, we denote by $F(T_1, \dotsc, T_n)$ the block matrix of operators $[f_{ij}(T_1, \dotsc, T_n)]_{m \times m}$ and the right hand side of the inequality in (\ref{vNe}) is the following here: 
	\[
	\|F\|_{\infty, X}=\sup\{\|[f_{ij}(\xi)]_{m\times m}\| \ : \ \xi \in X  \}.
	\]
	
	\subsection{Distinguished boundary and rational dilation.}\label{sub2.2} Let $X\subset \C^n$ be a compact set. A \textit{boundary} for $X$ is a closed subset
	$C$ of $X$ such that every function in $Rat(X)$ attains its maximum modulus
	on $C$. It follows from the theory of uniform algebras that the intersection of all
	the boundaries of $X$ is also a boundary of $X$ and it is the smallest among all
	boundaries. This is called the \textit{distinguished boundary} of $X$ and is
	denoted by $bX$. For a bounded domain $\Omega \subset \C^n$, we denote by $b\Omega$ or $b \overline{\Omega}$, the
	distinguished boundary of $\overline{\Omega}$, and for the sake of simplicity sometimes we call it just the
	distinguished boundary of $\Omega$.
	
	\smallskip	
	
Let $X$ be a spectral set for a commuting $n$-tuple $\underline{T}=(T_1, \dots , T_n)$. Then $\underline T$ is said to have a \textit{rational dilation} or \textit{normal $bX$-dilation} if there exist a Hilbert space $\mathcal{K},$ an isometry $V: \mathcal{H} \to \mathcal{K}$ and a commuting $n$-tuple of normal operators $\underline{N}$ on 
	$\mathcal{K}$ with $\sigma_T(\underline{N}) \subseteq bX$ such that 
	\begin{equation}\label{eq:Rat}
		f(\underline{T})=V^*f(\underline{N})V
	\end{equation}
	for all $f \in Rat(X)$, or simply $f(\underline{T})=P_\mathcal{H}f(\underline{N})|_\mathcal{H}$ for every $f \in Rat(X)$ when $\mathcal{H}$ is realized as a closed linear subspace of $\mathcal{K}$ and $P_\HS$ is the orthogonal projection of $\mathcal{K}$ onto the space $\mathcal{H}$. The dilation is said to be \textit{minimal} when 
	\[
	\mathcal{K}=\overline{\mbox{span}}\{f(\underline{N})h \ :\ h \in \mathcal{H} \ \mbox{ and } \ f \in Rat(X)\}.
	\]

The following two elementary results are well-known and are useful in our context. 

	\begin{lem}
		Let $(T_1, \dotsc, T_n)$ be a commuting tuple of normal operators on a Hilbert space $\mathcal{H}$. Then a compact set $K$ is a spectral set for $(T_1, \dotsc, T_n)$ if and only if $\sigma_T(T_1, \dotsc, T_n)\subseteq K$.
	\end{lem}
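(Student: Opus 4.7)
The plan is to prove both implications by exploiting the fact that commuting normal tuples are governed by a joint spectral measure, so every quantity of interest is computed pointwise on the joint spectrum.

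First I would handle the forward direction, which is essentially by definition. If $K$ is a spectral set for $(T_1,\dotsc,T_n)$, then by the very definition of a spectral set recalled in Subsection~2.1, one has $\sigma_T(T_1,\dotsc,T_n)\subseteq K$, with no use of normality required.

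For the backward direction I would invoke the spectral theorem for a commuting tuple of normal operators: there exists a unique compactly supported regular projection-valued measure $E$ on $\mathbb C^n$, supported exactly on $\sigma_T(T_1,\dotsc,T_n)$, such that
\[
T_i=\int_{\sigma_T(\underline T)} z_i\, dE(z), \qquad i=1,\dotsc,n.
\]
This furnishes a $*$-homomorphism $f\mapsto \int f\,dE$ from $C(\sigma_T(\underline T))$ into $\mathcal B(\HS)$ that is isometric: $\|\int f\,dE\|=\sup_{z\in\sigma_T(\underline T)}|f(z)|$. Assuming $\sigma_T(\underline T)\subseteq K$, any $f\in\mathrm{Rat}(K)$ is holomorphic (and in particular continuous) on a neighborhood of $K$ and hence on $\sigma_T(\underline T)$. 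A standard compatibility check (polynomials map correctly, and inverses of functions nonvanishing on $\sigma_T(\underline T)$ map to inverses) shows that $f(\underline T)$ defined via the Taylor/rational functional calculus coincides with $\int f\,dE$. Therefore
\[
\|f(\underline T)\|=\sup_{z\in\sigma_T(\underline T)}|f(z)|\;\le\;\sup_{z\in K}|f(z)|=\|f\|_{\infty,K},
\]
which is exactly the von Neumann inequality required to conclude that $K$ is a spectral set.

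The only nontrivial point, and thus the main thing to justify carefully, is the identification of the rational functional calculus $f(\underline T)$ with the spectral integral $\int f\,dE$; everything else is either a definition or a direct consequence of the spectral theorem. Once that identification is in place, the inequality $\sup_{\sigma_T(\underline T)}|f|\le\sup_K|f|$ is immediate from $\sigma_T(\underline T)\subseteq K$, completing the proof.
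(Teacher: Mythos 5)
Your proof is correct and is the standard argument: the paper itself does not prove this lemma but defers to the literature (citing \cite{TaylorI12} and Lemma 5.8 of \cite{PalS}), where the proof proceeds exactly as you describe, via the joint spectral measure and the isometry of the continuous functional calculus on $C(\sigma_T(\underline{T}))$. The one point you rightly flag — identifying the rational functional calculus with the spectral integral — is handled by the usual spectral mapping argument (a polynomial $q$ nonvanishing on $\sigma_T(\underline{T})$ yields an invertible $q(\underline{T})$ whose inverse is $\int q^{-1}\,dE$), so no gap remains.
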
	
One can find a proof to this result in the literature, e.g. see \cite{TaylorI12} or Lemma 5.8 in \cite{PalS}. Also, the following proposition can be located as Lemma 3.12 in \cite{PalS}.

\begin{prop}\label{basicprop:01} 
		Let $X$ be a compact polynomially convex set in $\C^n$ and let $\underline{T}=(T_1, \dotsc, T_n)$ be a commuting tuple of operators for which $X$ is a spectral set. Then $\underline{T}$ admits a rational dilation if and only if  there exist a Hilbert space $\mathcal{K},$ an isometry $V: \mathcal{H} \to \mathcal{K}$ and a commuting $n$-tuple of normal operators $\underline{N}$ on 
		$\mathcal{K}$ with $\sigma_T(\underline{N}) \subseteq bX$ such that 
		\begin{equation}\label{eq-new1}
			p(\underline{T})=V^*p(\underline{N})V
		\end{equation}
		for every polynomial $p$ in $\C[z_1, \dotsc, z_n]$.	
	\end{prop}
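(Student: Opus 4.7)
The forward implication is essentially immediate. If $\underline{T}$ has a rational dilation in the sense of Subsection \ref{sub2.2}, then by definition $f(\underline{T})=V^*f(\underline{N})V$ for every $f \in Rat(X)$, and since polynomials form a subclass of $Rat(X)$, equation (\ref{eq-new1}) holds in particular.

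For the nontrivial converse, the plan is to reduce the identity for an arbitrary $f \in Rat(X)$ to the hypothesised polynomial identity via uniform approximation, where polynomial convexity of $X$ is the crucial structural input. First I would note that since $\underline{N}$ is a commuting tuple of normal operators with Taylor joint spectrum contained in $bX \subseteq X$, the set $X$ is automatically a spectral set for $\underline{N}$ by the lemma preceding this proposition. Thus both $\underline{T}$ and $\underline{N}$ satisfy the von Neumann type inequality $\|r(\cdot)\| \leq \|r\|_{\infty, X}$ for every $r \in Rat(X)$, and in particular for every polynomial.

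Next, given $f = p/q \in Rat(X)$, the polynomial $q$ has no zeros on the compact set $X$, so there is an open neighbourhood $U \supseteq X$ on which $q$ does not vanish and hence $f$ is holomorphic on $U$. Because $X$ is polynomially convex, the Oka-Weil approximation theorem yields a sequence of polynomials $(p_k)_{k \geq 1}$ with $\|p_k - f\|_{\infty, X} \to 0$. Applying the spectral set inequalities for $\underline{T}$ and for $\underline{N}$ gives
\[
\|p_k(\underline{T}) - f(\underline{T})\| \leq \|p_k - f\|_{\infty, X} \to 0, \qquad \|p_k(\underline{N}) - f(\underline{N})\| \leq \|p_k - f\|_{\infty, X} \to 0,
\]
so $p_k(\underline{T}) \to f(\underline{T})$ and $p_k(\underline{N}) \to f(\underline{N})$ in operator norm. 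Passing to the limit in the assumed identity $p_k(\underline{T}) = V^* p_k(\underline{N}) V$ then produces $f(\underline{T}) = V^* f(\underline{N}) V$, which is exactly the rational dilation property.

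The step that demands the most care is the invocation of Oka-Weil, which genuinely uses polynomial convexity; without this hypothesis the polynomial hull of $X$ would be strictly larger and a generic $f \in Rat(X)$ could not be realised as a uniform limit of polynomials on $X$, so the whole reduction would break down. The remaining ingredients, namely the continuity of the Taylor-type functional calculi of $\underline{T}$ and $\underline{N}$ extracted from the spectral set inequality, are routine once $X$ has been recognised as a spectral set for $\underline{N}$ via the normality of its entries.
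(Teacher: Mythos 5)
Your argument is correct: the forward direction is immediate, and the converse via Oka--Weil approximation on the polynomially convex set $X$, combined with the von Neumann inequalities for $\underline{T}$ and for the normal tuple $\underline{N}$ (the latter holding because $\sigma_T(\underline{N})\subseteq bX\subseteq X$), is exactly the standard route. The paper does not reproduce a proof but cites Lemma 3.12 of \cite{PalS}, and your argument is the expected one for that result.
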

	
	Naturally, our curiosity extends to ask how these classical concepts (i.e., spectral set, complete spectral set and rational dilation) are related with each other. The following famous theorem due to Arveson combines them beautifully.
	
	\begin{thm}[Arveson, \cite{ArvesonI}]\label{thm_Arveson} Let $\underline{T}$ be an $n$-tuple of commuting operators on a Hilbert space $\HS$ for which a compact set $X \subseteq \C^n$ is a spectral set. Then $X$ is a complete spectral set for $\underline{T}$ if and only if $\underline{T}$ has a normal $bX$-dilation.	
	\end{thm}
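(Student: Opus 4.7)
The plan is to prove the two directions separately; the forward implication is a computational check, while the converse rests on the Arveson extension theorem together with Stinespring dilation.

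For the easy direction, suppose $\underline{T}$ admits a normal $bX$-dilation $(\mathcal{K}, V, \underline{N})$ with $\sigma_T(\underline{N}) \subseteq bX$ and $f(\underline{T}) = V^* f(\underline{N}) V$ for every $f \in Rat(X)$. Given a matricial rational function $F=[f_{ij}]_{m\times m}$ with entries in $Rat(X)$, applying the dilation formula entrywise gives $F(\underline{T}) = (I_m \otimes V)^* F(\underline{N})(I_m \otimes V)$, and since $I_m \otimes V$ is an isometry we obtain $\|F(\underline{T})\| \leq \|F(\underline{N})\|$. The joint spectral theorem for commuting normal tuples yields $\|F(\underline{N})\| = \sup_{z \in \sigma_T(\underline{N})} \|F(z)\| \leq \sup_{z \in bX} \|F(z)\|$. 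Finally, the function $z \mapsto \|F(z)\|$ is the supremum of moduli of rational functions in $Rat(X)$ (parametrized by unit vectors), so its maximum on $X$ is attained on the distinguished boundary $bX$; hence $\sup_{bX}\|F\|=\|F\|_{\infty, X}$, showing that $X$ is a complete spectral set.

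For the converse, assume $X$ is a complete spectral set for $\underline{T}$ and define the unital algebra homomorphism $\rho : Rat(X) \to \mathcal{B}(\mathcal{H})$ by $\rho(f) = f(\underline{T})$. The hypothesis says that $\rho$ is completely contractive with respect to the supremum norm over $X$. Because $bX$ is a boundary for $Rat(X)$, the restriction map $Rat(X) \to C(bX)$ is isometric on all matrix levels, so $\rho$ factors through a completely contractive unital homomorphism $\rho_0$ defined on the uniform closure $A$ of $Rat(X)|_{bX}$ in $C(bX)$. Invoking the Arveson extension theorem extends $\rho_0$ to a unital completely positive map $\widetilde{\rho} : C(bX) \to \mathcal{B}(\mathcal{H})$. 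Stinespring's dilation theorem then yields a Hilbert space $\mathcal{K} \supseteq \mathcal{H}$ and a unital $*$-representation $\pi : C(bX) \to \mathcal{B}(\mathcal{K})$ with $\widetilde{\rho}(g) = P_{\mathcal{H}}\,\pi(g)|_{\mathcal{H}}$ for every $g \in C(bX)$. Setting $N_i := \pi(z_i|_{bX})$, the tuple $\underline{N}=(N_1,\dots,N_n)$ consists of commuting normal operators, and since $\pi$ is a $*$-representation of $C(bX)$ its image is a commutative $C^*$-algebra with Gelfand spectrum contained in $bX$; this forces $\sigma_T(\underline{N}) \subseteq bX$. For $f \in Rat(X)$ one has $f(\underline{T}) = \rho(f) = P_{\mathcal{H}}\,\pi(f|_{bX})|_{\mathcal{H}} = P_{\mathcal{H}}\, f(\underline{N})|_{\mathcal{H}}$, which via Proposition \ref{basicprop:01} gives the normal $bX$-dilation.

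The principal obstacle is to land the joint Taylor spectrum of $\underline{N}$ inside the distinguished boundary $bX$ rather than merely inside $X$. Had I applied Arveson--Stinespring directly to the inclusion $Rat(X) \hookrightarrow C(X)$, the resulting $*$-representation would only produce normal operators with spectrum in $X$. The fix is to exploit that $bX$ is a boundary for $Rat(X)$, which lets one replace $C(X)$ by $C(bX)$ before extending, so that the Stinespring representation is a representation of $C(bX)$, and its generators automatically have joint spectrum in $bX$. A secondary subtlety is justifying that the matricial maximum modulus $\sup_{X}\|F(z)\| = \sup_{bX}\|F(z)\|$ transfers from the scalar case to matrix-valued rational functions, which follows by testing against pairs of unit vectors and using the scalar distinguished boundary property.
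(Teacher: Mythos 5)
Your argument is correct and is exactly the standard Arveson--Stinespring proof; the paper itself states this result as a citation to \cite{ArvesonI} without giving a proof, so there is no in-paper argument to compare against. The only cosmetic remark is that in the forward direction you do not need the equality $\sup_{bX}\|F\|=\|F\|_{\infty,X}$ at all -- the trivial inclusion $bX\subseteq X$ already gives $\sup_{bX}\|F\|\leq \|F\|_{\infty,X}$, which is the direction required; the nontrivial equality (and the complete isometry of restriction to $bX$) is genuinely needed only where you invoke it in the converse, and your justification there via testing against unit vectors is sound.
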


	\section{The algebraic pairs}\label{sec03}
	
	\vspace{0.1cm}

\noindent Recall that an algebraic pair is a commuting pair of operators $(T_1,T_2)$ that is annihilated by a polynomial $p$ in $\C[z_1,z_2]$. We begin this Section with various notions of algebraic pairs associated with the bidisc and the symmetrized bidisc. However, the main aim of this Section is to study an algebraic pair associated with the symmetrized bidisc, namely the algebraic pure $\Gamma$-isometries. Needless to mention that an algebraic pure $\Gamma$-isometry $(S,P)$ is an algebraic pair and a $\Gamma$-isometry such that $P$ is a pure isometry i.e., an isometry with ${P^*}^n \rightarrow 0$ strongly as $n \rightarrow \infty$. We recall from the `Introduction' that a polynomial $p \in \C[z_1, z_2]$ is said to be toral or $\Gamma$-distinguished if its zero set defines a distinguished variety in the bidisc or the symmetrized bidisc respectively.

\medskip

 \textbf{Note:} We want to emphasize that the term \textit{toral polynomial} is used in two different contexts in the literature. One definition is attributed to Agler et al. \cite{AglerMcCarthyStankus}, while another one appears in the work \cite{DasII} by Das et al. To avoid any confusion with the existing literature, we briefly discuss these two definitions here. Let $p \in \C[z_1, z_2]$. Then $p$ is called \textit{toral in the sense of Agler et al.} if $\T^2$ is a determining set for the zero set $Z(p)$ of $p$, i.e., if $f$ is a holomorphic function on $Z(p)$ and $f|_{Z(p) \cap \T^2}=0$, then $f=0$ on $Z(p)$. On the other hand, if $Z(p)$ defines a distinguished variety with respect to $\D^2$, then $p$ is called \textit{toral in the sense of Das et al.}. Let us consider, the polynomial $p(z_1, z_2)=z_1-z_2$ whose zero set is $Z(p)=\{(z, z) : z \in \C \}$. Then it is not difficult to see that $p$ is toral in the sense of Das et al. Moreover, if $f$ is holomorphic on $Z(p)$, then the map $\widehat{f}: \C \to \C, \ \widehat{f}(z)= f(z, z)$ is an entire function. Let $f|_{Z(p) \cap \T^2}=0$. Then $\widehat{f}=0$ on $\T$ and, by identity theorem, $\widehat{f}=0$. Consequently, $f=0$ on $Z(p)$. Thus, $p$ is also toral in the sense of Agler and co-authors. Hence, these two classes of polynomials have a non-empty intersection. However, they are not same, i.e. these two notions of toral polynomials do not coincide. The polynomial $q(z_1, z_2)=1-z_1z_2$ is not toral in the sense of Das et al. as $Z(q) \cap \D^2 =\emptyset$. Since $Z(q)$ is disjoint from $\D^2 \cup (\C \setminus \DC)^2$, it follows from Theorem 3.5 in \cite{AglerMcCarthyStankus} that $p$ is toral in the sense of Agler et.al. In this article, we adopt the definition of toral polynomial in the sense of Das et al. as in \cite{DasII}.

\smallskip

 The following important class of polynomials appeared in the literature, e.g. see \cite{AglerKneseMcCarthy2, Knese9}.
\begin{defn}
	A polynomial $q \in \C[z_1, z_2]$ is said to be \textit{inner toral} if $Z(q) \subseteq \mathbb{D}^2 \cup \mathbb{T}^2 \cup \mathbb{E}^2$. 
\end{defn}
Needless to mention, every inner toral polynomial $q$ satisfies $Z(q) \cap \partial \D^2=Z(q) \cap \T^2$. Thus, an inner toral polynomial is close to being a toral polynomial. Indeed, the product of an inner toral polynomial with a toral polynomial gives a toral polynomial. However, we mention that not every toral polynomial is inner toral as the following example from \cite{DasII} clarifies.

\begin{eg}
	Consider $q(z_1, z_2)=z_1z_2-1$. Then $Z(q) \cap \partial \D^2 \subseteq \T^2$. So, the polynomial $q_0(z_1, z_2)=(z_1+z_2)q(z_1, z_2)$ is toral. Since $(2, 1\slash 2) \in Z(q_0) \cap (\mathbb{E} \times \D)$,  $q_0(z_1, z_2)$ is not inner toral. \qed
\end{eg} 

Below we define a natural analogue of inner toral polynomial for the symmetrized bidisc.

\begin{defn}
A polynomial $p \in \C[z_1, z_2]$ is said to be \textit{distinguished} if $Z(p)$ is a subset of $\mathbb{G}_2 \cup b\Gamma \cup \pi(\mathbb{E}^2)$.
\end{defn} 
One can construct distinguished and $\Gamma$-distinguished polynomials from inner toral polynomials and toral polynomial respectively via symmetrization. The subsequent example explains this.
\begin{eg} \label{3.3}
 Let $q(z_1, z_2)$ be a toral polynomial. Evidently, the symmetric polynomial $q_s(z_1, z_2)=q(z_1, z_2)q(z_2, z_1)$ is also toral. Since $q_s$ is a symmetric polynomial, there exists $p \in \C[z_1, z_2]$ such that $q_s=p\circ \pi$. It can be easily verified that $p$ is a $\Gamma$-distinguished polynomial. \qed 
 \end{eg}

\smallskip 
 
Conversely, if $p$ is a $\Gamma$-distinguished polynomial, then the polynomial $q=p \circ \pi$ is symmetric and toral as the points in the symmetrized bidisc are just symmetrization of the points of $\mathbb D^2$.

\begin{eg} \label{3.4}
	The polynomial $q(z_1, z_2)=z_1- z_2$ is a toral polynomial whose zero set has empty intersection with $\partial \overline{\mathbb D}^2 \setminus \T^2$. Evidently, $q_s(z_1, z_2)=(z_1- z_2)(z_2- z_1)$ is a symmetric toral polynomial with $Z(q_s) \cap \D^2 \neq \emptyset$. Thus, the polynomial $\widetilde q$ satisfying $q_s=\widetilde q \circ \pi$ is $\Gamma$-distinguished. Some routine calculations yield that $\widetilde q(z_1,z_2)=4z_2-z_1^2$ is $\Gamma$-distinguished. 
	\qed 
\end{eg}	

In this Section, we prove the existence of a square-free minimal annihilating polynomial for any algebraic pure $\Gamma$-isometry. One can ask if the zero set of this minimal polynomial is a distinguished variety in $\mathbb G_2$. We will see in the next Section that it is not true. We begin with the following notion. 
	\begin{defn}\label{3.6}
		A $\Gamma$-isometry $(S,P)$ is called \textit{cyclic} if there exists $u \in \mathcal{H}$ such that the set
		\[
		\mathbb{C}[S,P] u:=\{q(S,P)u: q \in \mathbb{C}[z_1, z_2]\}
		\]
		is dense in $\mathcal{H}$. Such a vector $u \in \mathcal H$ is called a \textit{cyclic vector} for the cyclic $\Gamma$-isometry $(S,P)$.
		
	\end{defn}
Given a contraction $T$ acting on a Hilbert space $\mathcal{H}$, it is well-known that $TD_T=D_{T^*}T$ (see Section 3 in \cite{NagyFoias6} for details).
As a consequence, we have the following elementary lemma.
\begin{lem}
For a Hilbert space contraction $T$, the following operator identity holds:
\begin{equation}\label{eqn_decomp}
\mathcal{D}_{T^*}=\overline{T\mathcal{D}_T}\oplus Ker \ T^*.
\end{equation}
\end{lem}

\begin{proof}
We have that $
T\mathcal{D}_T \subseteq \mathcal{D}_{T^*} $ and $T^*\mathcal{D}_{T^*} \subseteq \mathcal{D}_T$.
For any $x \in Ker \ T^*$, we have $x=x-TT^*x=D_{T^*}^2x$. Hence, $Ker \ T^* \subseteq D_{T^*}\mathcal{H}$. On the other hand, for every $x \in \mathcal{H}$ and $y \in Ker \ T^*$, we have
$
\langle TD_Tx, y\rangle = \langle D_Tx, T^*y\rangle=0.
$
So, $Ker \ T^*$ is orthogonal to $T\mathcal{D}_T$. Now, for any $g \in (T\mathcal{D}_T)^\perp \subseteq \mathcal D_{T^*}$ we have $T^*g \perp \mathcal{D}_T$. Again, $g \in \mathcal D_{T^*}$ and $T^*D_{T^*}=D_TT^*$ imply that $T^*g \in T^*\mathcal{D}_{T^*}\subseteq \mathcal D_T$.
Therefore, $T^*g=0$ and hence $g \in Ker \ T^*$ which proves that $(T\mathcal{D}_T)^\perp \subseteq \, Ker \ T^*$. So, we have (\ref{eqn_decomp}).
\end{proof}


The following lemma shows how an annihilating polynomial gives an estimate of the dimension of the defect space of $P^*$ for a cyclic $\Gamma$-isometry $(S,P)$. In this connection, we say that a polynomial $q \in \mathbb{C}[z_1, z_2]$ has degree $(n, m)$ if it has degree $n$ in $z_1$ and $m$ in $z_2$.	 
	\begin{lem} \label{lem3.6}
		If $(S,P)$ is a cyclic $\Gamma$-isometry on a Hilbert space $\mathcal{H}$ satisfying $q(S,P)=0$, where $q$ has degree $(n, m)$, then dim $\mathcal{D}_{P^*} \leq n.$
	\end{lem}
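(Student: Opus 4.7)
The plan is to realize $\mathcal{D}_{P^*}$ as a concrete subspace on which $S^*$ acts, extract from $q(S,P)=0$ a scalar polynomial identity for the compression of $S$, and then bound the dimension using cyclicity.

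Since $(S,P)$ is a $\Gamma$-isometry, $P$ is an isometry; hence $D_P=0$ and the decomposition (\ref{eqn_decomp}) applied to $P$ yields $\mathcal{K}:=\mathcal{D}_{P^*}=\ker P^*=(\text{Ran}\,P)^\perp$. Because $SP=PS$, the subspace $\mathcal{K}^\perp=\text{Ran}\,P$ is $S$-invariant, so with respect to $\HS=\mathcal{K}\oplus\mathcal{K}^\perp$,
\[
S=\begin{pmatrix} A & 0 \\ C & D \end{pmatrix} \quad\text{and}\quad S^i=\begin{pmatrix} A^i & 0 \\ * & D^i \end{pmatrix}\quad(i\ge 0),
\]
while $P$ has vanishing first row in this decomposition (because $\text{Ran}\,P\subseteq\mathcal{K}^\perp$), forcing $P^j$ to have zero $(1,1)$-block for every $j\ge 1$. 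Writing $q(z_1,z_2)=\sum_{i=0}^{n}\sum_{j=0}^{m}a_{ij}z_1^iz_2^j$, the $(1,1)$-block of the equation $q(S,P)=0$ therefore collapses to
\[
p_0(A)=0, \qquad p_0(z):=q(z,0)=\sum_{i=0}^{n}a_{i0}z^i,
\]
a polynomial relation on $A$ of degree at most $n$.

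If $p_0\equiv 0$, I factor out the largest power of $z_2$ dividing $q$, writing $q=z_2^k\widetilde q$ with $\widetilde q(z,0)\not\equiv 0$. Since $P$ is injective (being an isometry), $P^k\widetilde q(S,P)=q(S,P)=0$ forces $\widetilde q(S,P)=0$, and $\widetilde q$ still has $z_1$-degree at most $n$. So I may assume $p_0\not\equiv 0$ with $d:=\deg p_0\le n$.

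Finally, I identify $\mathcal{K}$ with a cyclic subspace for $A$. Let $u$ be a cyclic vector for $(S,P)$, let $Q$ be the orthogonal projection of $\HS$ onto $\mathcal{K}$, and set $u_1:=Qu$. For every $j\ge 1$, $S^iP^ju=P^jS^iu\in\text{Ran}\,P=\mathcal{K}^\perp$, so continuity of $Q$ combined with cyclicity of $u$ gives $\mathcal{K}=\overline{\mbox{span}}\{QS^iu:i\ge 0\}$. The matrix form of $S^i$ together with the $S$-invariance of $\mathcal{K}^\perp$ yields $QS^iu=A^iu_1$, hence $\mathcal{K}=\overline{\mbox{span}}\{A^iu_1:i\ge 0\}$. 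The identity $p_0(A)=0$ then expresses $A^d$ as a linear combination of $I,A,\dots,A^{d-1}$, so inductively $A^iu_1\in\mbox{span}\{u_1,Au_1,\dots,A^{d-1}u_1\}$ for every $i\ge 0$. This proves $\dim\mathcal{K}\le d\le n$, which is the desired estimate. The only subtle step is the reduction when $q(z,0)\equiv 0$, handled by the injectivity of $P$; the rest is a direct block-matrix and cyclicity computation.
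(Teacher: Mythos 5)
Your proof is correct. It reaches the same two pillars as the paper's argument --- that only $q(\cdot,0)$ survives when $q(S,P)$ is tested against $\ker P^*$, and that cyclicity of $u$ forces $\ker P^*$ to be generated by the projections of $S^iu$ --- but you organize them differently. The paper argues by contradiction and by duality: assuming $\dim\ker P^*>n$, it picks $h\in\ker P^*$ orthogonal to $u,Su,\dots,S^{n-1}u$, uses $q(S,P)^*h=q(S,0)^*h=0$ together with polynomial division of $p(z_1,0)$ by $q(z_1,0)$ to show $h\perp\C[S,P]u$, and concludes $h=0$. You instead work directly: you block-decompose $S$ and $P$ with respect to $\ker P^*\oplus\operatorname{Ran}P$, read off $q(A,0)=0$ for the compression $A$ of $S$ to $\ker P^*$, identify $\ker P^*$ as the cyclic space $\overline{\operatorname{span}}\{A^iQu\}$, and bound its dimension by $\deg q(\cdot,0)$. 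The two are essentially adjoint to one another ($A^*=S^*|_{\ker P^*}$), but your version is constructive rather than by contradiction, and it has one concrete advantage: you explicitly dispose of the degenerate case $q(z_1,0)\equiv 0$ by cancelling the largest power of $z_2$ using injectivity of the isometry $P$. The paper's division step $p(z_1,0)=f(z_1)q(z_1,0)+g(z_1)$ with $\deg g<n$ silently assumes $q(z_1,0)\not\equiv 0$, so your reduction actually patches a small gap in the published argument.
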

	
	\begin{proof}
		Let $u\in \mathcal H$ be such that $\mathcal{H}=\overline{Span} \{p(S,P)u: p \in \mathbb{C}[z_1, z_2]\}$. Since $P$ is an isometry, we have $D_P=0.$ Thus, it follows from (\ref{eqn_decomp}) that $\mathcal{D}_{P^*}= \mbox{Ker} \, P^*$. If possible let $\operatorname{dim}$ Ker $P^{*}>n$. We can choose some non-zero $h \in \operatorname{Ker}\, P^{*}$ that is orthogonal to $S^{i}u$ for $i=0,1, \ldots, n-1$. Now, observe that
		$
		0=q(S,P)^{*}h=q(S, 0)^{*}h.
		$
		Let $p \in \mathbb{C}[z_1, z_2]$ and write
		$
		p(z_1, 0)=f(z_1)q(z_1, 0)+g(z_1),
		$
		where $g$ has degree less than $n$. Then,
		$
		p(S,P)^{*}h  =p\left(S, 0\right)^{*}h =f(S)^{*} q(S, 0)^{*}h+g\left(S\right)^{*}h  =g\left(S\right)^{*}h$.	Again, since $h$ is orthogonal to $S^iu$ for $i=0,1, \ldots, n-1$ and degree of $g<n,$  we have that 
		$
		\left\langle p(S,P) u,h \right\rangle=\left\langle g\left(S\right)u, h\right\rangle=0,
		$
		for any $p \in \mathbb{C}[z_1, z_2].$ Since $u$ is a cyclic vector, we have that $h=0$ and this leads to a contradiction. Hence, $\operatorname{dim} \mathcal D_{P^*}= \operatorname{dim} (\operatorname{Ker} \, P^{*}) \leq n$ and the proof is complete.		
	\end{proof}

For a Hilbert space $\HS$, the vectorial Hardy-Hilbert space $H^2(\HS)$ consists of all holomorphic functions from $\mathbb{D}$ to the Hilbert space $\HS$ with square summable coefficients, that is
\[
H^2(\HS)=\left\{\overset{\infty}{\underset{n=0}{\sum}}x_nz^n \ : \ z \in \D, \ x_n \in \HS \ \text{and} \  \overset{\infty}{\underset{n=0}{\sum}}\|x_n\|^2 < \infty \right\}.
\]
The \textit{Toeplitz operator} with symbol $\phi$, denoted by $T_\phi$, is defined for any bounded analytic function $\phi: \D \to \mathcal{B}(\HS)$ as the multiplication by $\phi$ on $H^2(\HS)$, i.e., $T_\phi(f)(z)=\phi(z)(f(z))$ for every $f \in H^2(\HS)$ and $z \in \D$. For $\phi(z)=zI$, we simply write $T_z$ instead of $T_\phi$. The rich operator theory of the symmetrized bidisc is based on one fundamental result from \cite{Pal8}. It states that for every $\Gamma$-contraction $(S,P)$, there is a unique operator $A \in \mathcal B(\mathcal D_P)$ with numerical radius $\omega(A)$ being not greater than $1$ such that $A$ satisfies the following operator equation in $X$:
\begin{equation} \label{eqn:funda-01}
S-S^*P=D_PXD_P.
\end{equation}
The unique operator $A$ is called the \textit{fundamental operator} of the $\Gamma$-contraction $(S,P)$. The following theorem gives an explicit model for a pure $\Gamma$-isometry in terms of the fundamental operator of its adjoint.
	\begin{thm}[\cite{PalShalit1}, Theorem 2.16]  \label{thm:modelpure}
		Let $(S,P)$ be a pair of commuting operators on a Hilbert space $\mathcal{H}$. If $(S,P)$ is a pure $\Gamma$-isometry, then there is a unitary operator $U:\mathcal{H} \to H^2(\mathcal{D}_{P^*})$ such that 
		\[
		S=U^*T_\phi U \quad \mbox{and} \quad P=U^*T_zU, \quad \mbox{where} \  \phi(z)=F_*^*+F_*z,
		\]
		$F_* \in B(\mathcal{D}_{P^*})$ being the fundamental operator of $(S^*, P^*)$.
	\end{thm}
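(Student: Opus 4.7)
The plan is to put $P$ into its Wold shift form and then use the $\Gamma$-isometric identity $S^*P=S$ from Theorem \ref{2.7} to realize $S$ as a Toeplitz operator with a linear pencil symbol.

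First I would invoke the Wold decomposition of the pure isometry $P$: the canonical unitary $U:\HS \to H^2(\mathcal{D}_{P^*})$ defined on $h\in\HS$ by $Uh(z)=\sum_{n \ge 0}(D_{P^*}P^{*n}h)\,z^n$ intertwines $P$ with the unilateral shift, $UPU^* = T_z$. Since $S$ commutes with $P$, the transported operator $USU^*$ commutes with $T_z$ on $H^2(\mathcal{D}_{P^*})$, and the standard description of the commutant of the shift forces $USU^* = T_\phi$ for some bounded analytic $\phi:\D \to \mathcal{B}(\mathcal{D}_{P^*})$; write $\phi(z)=\sum_{n\ge 0}A_n z^n$.

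Next I would impose the $\Gamma$-isometric relation $S^*P=S$, which under $U$ becomes $T_\phi^* T_z = T_\phi$. A direct termwise comparison on the vectors $z^n x$ with $x\in \mathcal{D}_{P^*}$, using that $T_\phi$ is lower-triangular in the block basis indexed by $n$ while $T_\phi^*$ is upper-triangular, forces $A_n = 0$ for all $n \ge 2$ together with the single coefficient relation $A_0 = A_1^*$. Setting $F_* := A_1$, this yields $\phi(z) = F_*^* + F_* z$, which is the claimed form of the symbol.

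Finally, to identify $F_*$ with the fundamental operator of $(S^*,P^*)$, I would compute $T_\phi^* - T_\phi T_z^*$ on the same basis. Since $I - T_z T_z^* = D_{T_z^*}^2$ is the orthogonal projection onto the constant functions, i.e.\ the embedded copy of $\mathcal{D}_{P^*}$, this operator annihilates $z^n x$ for every $n \ge 1$ and sends a constant vector $x$ to $F_* x$; unravelling the unitary $U$, this reads exactly as $S^* - S P^* = D_{P^*} F_* D_{P^*}$, and the uniqueness of the solution to \eqref{eqn:funda-01} identifies $F_*$ as the fundamental operator of $(S^*,P^*)$. The main obstacle is the termwise bookkeeping in the second step: analyticity of $\phi$ is precisely what makes the coefficient comparison $T_\phi^* T_z = T_\phi$ collapse to the two clean conditions $A_{n\ge 2}=0$ and $A_0=A_1^*$, and one must additionally check that the norm bound $\|S\|\le 2$ is equivalent to $\omega(F_*)\le 1$ so that $F_*$ is a legitimate fundamental operator.
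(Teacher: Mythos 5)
This theorem is quoted from \cite{PalShalit1} and the paper supplies no proof of its own; your argument is correct and is essentially the standard one from that source: Wold decomposition of the pure isometry $P$, the commutant of the unilateral shift to realize $S$ as $T_\phi$, the $\Gamma$-isometric identity $S^*P=S$ to collapse $\phi$ to the linear pencil $F_*^*+F_*z$, and the fundamental equation to identify $F_*$. One small simplification: since $D_{P^*}$ has dense range in $\mathcal{D}_{P^*}$, the solution of $S^*-SP^*=D_{P^*}XD_{P^*}$ is unique outright, so $F_*$ is identified as the fundamental operator of $(S^*,P^*)$ by uniqueness alone, and $\omega(F_*)\le 1$ comes for free from the existence theorem rather than requiring a separate check against $\|S\|\le 2$.
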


The following theorem states that every pure $\Gamma$-contraction $(S,P)$ can be modeled as a compression of a pure $\Gamma$-isometry which is obtained in terms of the fundamental operator of $(S^*,P^*)$.
	
	\begin{thm}[\cite{Bhatt-Pal}, Theorem 3.1]\label{thm:2.12}
	Let $(S, P)$ be a pure $\Gamma$-contraction defined on a Hilbert space $\HS$. Then the operator pair $(T_{F_*^*+F_*z}, T_z)$ on $H^2(\mathcal{D}_{P^*})$  is a minimal isometric dilation of $(S, P)$. Here $F_*$ is the fundamental operator of $(S^*, P^*)$. Moreover, $S^*=T_{F_*^*+F_*z}^*|_\HS$ and $P^*=T_z^*|_\HS$.
\end{thm}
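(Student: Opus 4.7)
The strategy is to embed $\HS$ into $H^2(\mathcal{D}_{P^*})$ via the classical Sz.-Nagy construction so that $P^*$ is realized as the restriction of $T_z^*$, and then show that with $\phi(z) = F_*^* + F_* z$ the Toeplitz operator $T_\phi$ simultaneously dilates $S$ on the same embedding. Since $P$ is a pure contraction, $\sum_{n \geq 0} \|D_{P^*} P^{*n} h\|^2 = \|h\|^2$ for every $h \in \HS$, so the map $V: \HS \to H^2(\mathcal{D}_{P^*})$ defined by $(Vh)(z) = \sum_{n \geq 0} (D_{P^*} P^{*n} h)\, z^n$ is an isometry. A direct coefficient computation yields $T_z^* V = V P^*$, so $V \HS$ is invariant under $T_z^*$ and $P^* = T_z^*|_{V\HS}$; minimality is inherited from the minimality of Sz.-Nagy's isometric dilation of $P$, since $\overline{\operatorname{span}}\{T_z^n V h : n \geq 0, h \in \HS\} = H^2(\mathcal{D}_{P^*})$ by density of $D_{P^*} \HS$ in $\mathcal{D}_{P^*}$.

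I would next verify that $(T_\phi, T_z)$ is itself a pure $\Gamma$-isometry via the characterization in Theorem \ref{2.7}: the two operators commute, $T_z$ is a pure isometry, a Fourier-coefficient check gives $T_\phi^* T_z = T_\phi$, and
\[
\|T_\phi\| = \sup_{\theta} \|F_*^* + e^{i\theta} F_*\| = 2 \sup_{\theta,\, \|x\|=1} \bigl|\operatorname{Re}(e^{i\theta/2} \langle F_* x, x \rangle)\bigr| \leq 2\,\omega(F_*) \leq 2,
\]
using only the defining numerical-radius bound $\omega(F_*) \leq 1$ of the fundamental operator.

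The heart of the proof is the identity $V S^* = T_\phi^* V$. Comparing Taylor coefficients and invoking the commutativity $S^* P^{*n} = P^{*n} S^*$ reduces this to the single operator identity
\[
D_{P^*} S^* = F_* D_{P^*} + F_*^* D_{P^*} P^* \qquad \text{on } \HS.
\]
Both sides have range in $\mathcal{D}_{P^*}$, and since $\mathcal{D}_{P^*} \cap \ker D_{P^*} = \{0\}$ it suffices to verify equality after applying $D_{P^*}$ on the left. Substituting the fundamental equation $D_{P^*} F_* D_{P^*} = S^* - S P^*$ for the $\Gamma$-contraction $(S^*, P^*)$ together with its adjoint $D_{P^*} F_*^* D_{P^*} = S - P S^*$, and using $D_{P^*}^2 = I - P P^*$, the expression collapses to $P(P^* S^* - S^* P^*) = 0$. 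This yields $V S^* = T_\phi^* V$, so $V \HS$ is co-invariant under $T_\phi$ and $S^* = T_\phi^*|_{V\HS}$, completing the proof. The main obstacle is precisely this operator identity: since $D_{P^*}$ need not be invertible, both fundamental equations and the commutativity of $S^*$ and $P^*$ must be deployed simultaneously rather than attempting to invert $D_{P^*}$ directly.
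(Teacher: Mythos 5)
This theorem is quoted from \cite{Bhatt-Pal} and the present paper supplies no proof of its own, so the comparison is with the original source: your argument is correct and follows essentially the same route, namely the Sz.-Nagy embedding $h\mapsto\sum_{n\ge 0}D_{P^*}P^{*n}h\,z^n$, the key intertwining identity $D_{P^*}S^{*}=F_{*}D_{P^*}+F_{*}^{*}D_{P^*}P^{*}$ established by left-multiplying by $D_{P^*}$ and combining the two fundamental equations for $(S^*,P^*)$ with the commutativity of $S^*$ and $P^*$, and the bound $\|T_{F_*^*+F_*z}\|\le 2\omega(F_*)\le 2$. The only point stated more tersely than it deserves is the minimality claim, which requires the standard coefficient-by-coefficient argument showing that any $g\perp T_z^{n}V\HS$ for all $n$ has $D_{P^*}g_n=0$ with $g_n\in\mathcal{D}_{P^*}$, hence $g=0$; but this is routine and your proof is sound.
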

	
The next lemma is a consequence of Theorem \ref{thm:modelpure}.

	\begin{lem} 
		Let $(S,P)$ on a Hilbert space $\mathcal{H}$ be a pure $\Gamma$- isometry and let $F_*$ be the fundamental operator of $(S^*, P^*)$. If $\dim \mathcal{D}_{P^*} < \infty,$ then $(S,P)$ is algebraic. Moreover, if $\omega(F_*)<1,$ then $(S,P)$ is $\Gamma$-distinguished.	
	\end{lem}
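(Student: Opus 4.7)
By Theorem~\ref{thm:modelpure}, up to unitary equivalence we may take $\mathcal{H}=H^2(E)$, $S=T_\phi$ and $P=T_z$, where $E=\mathcal{D}_{P^*}$ has finite dimension $n$ and $\phi(z)=F_*^*+F_*z$. Define
\[
q(w,z):=\det(wI_E-\phi(z)).
\]
Since $\phi(z)$ is linear in $z$, this is a polynomial in $\mathbb{C}[w,z]$ of degree $n$ in $w$. By the Cayley-Hamilton theorem (applied in the commutative ring $\mathbb{C}[z]$) we have $q(\phi(z),z)=0\in\mathcal{B}(E)$ for every $z$. Because $q(T_\phi,T_z)$ is the Toeplitz operator with operator-valued symbol $z\mapsto q(\phi(z),z)$, it follows that $q(S,P)=0$, proving that $(S,P)$ is algebraic.

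For the moreover part, assume $\omega(F_*)<1$; the plan is to show that this same $q$ is $\Gamma$-distinguished. The central observation is that if $v\in E$ is a unit eigenvector of the $n\times n$ matrix $\phi(z_0)$ with eigenvalue $s_0$, then
\[
s_0=\langle \phi(z_0)v,v\rangle=\bar\beta+z_0\beta,\qquad \beta:=\langle F_*v,v\rangle,
\]
so, writing $\alpha:=\bar\beta$, we obtain $s_0=\alpha+\bar\alpha z_0$ with $|\alpha|\leq \omega(F_*)<1$. A direct calculation yields
\[
|s_0-\overline{s_0}\,z_0|+|z_0|^2=|\alpha|(1-|z_0|^2)+|z_0|^2<1\quad\text{and}\quad |s_0|<2
\]
whenever $|z_0|<1$, which by the strict form of Theorem~\ref{thm205}(b) places $(s_0,z_0)$ in the open set $\mathbb{G}_2$. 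Picking any $z_0\in \mathbb{D}$ and any eigenvalue of $\phi(z_0)$ therefore produces a point of $Z(q)\cap \mathbb{G}_2$, so this intersection is nonempty.

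To verify $Z(q)\cap \partial\Gamma=Z(q)\cap b\Gamma$, let $(s,p)\in Z(q)\cap \partial\Gamma$; then $(s,p)\in \Gamma$, hence $|p|\leq 1$. If $|p|<1$, the preceding computation places $(s,p)\in \mathbb{G}_2$, contradicting $(s,p)\in \partial\Gamma$. Thus $|p|=1$, whereupon Theorem~\ref{thm205}(b) forces $|s-\bar s p|\leq 1-|p|^2=0$, and then Theorem~\ref{thm:0206}(b) yields $(s,p)\in b\Gamma$; the reverse inclusion $Z(q)\cap b\Gamma\subseteq Z(q)\cap \partial\Gamma$ is trivial. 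The key subtlety—and the main obstacle one must engineer around—is the \emph{strict} bound $\omega(F_*)<1$: it is precisely what converts the parameterization $s=\alpha+\bar\alpha p$ of $\Gamma$ from Theorem~\ref{thm205}(d) into membership in the open set $\mathbb{G}_2$, since with only $\omega(F_*)\leq 1$ the zero set of $q$ could meet $\partial\Gamma\setminus b\Gamma$ and destroy the distinguished-variety property.
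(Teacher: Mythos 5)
Your proof is correct and follows essentially the same route as the paper: reduce to the model $(T_{F_*^*+F_*z},T_z)$ on $H^2(\mathcal{D}_{P^*})$ and annihilate it with the characteristic polynomial $q(z_1,z_2)=\det\bigl(z_1I-F_*^*-F_*z_2\bigr)$, which is $\Gamma$-distinguished when $\omega(F_*)<1$. The only difference is that where the paper simply cites Theorem 3.5 of \cite{PalShalit1} for the latter fact, you reprove it directly via the unit-eigenvector computation $s=\alpha+\overline{\alpha}p$ with $|\alpha|\leq\omega(F_*)<1$ and the resulting strict inequality $|s-\overline{s}p|<1-|p|^2$; this is in fact the standard argument underlying that cited theorem, and your version of it is sound.
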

	
	\begin{proof}
		By Theorem \ref{thm:modelpure}, a model for $(S,P)$ is the pair of multiplication operators $(T_{F_*^*+F_*z},T_z)$ on $H^2(\mathcal{D}_{P^*}).$ Since $\dim \mathcal{D}_{P^*} < \infty,$ we have that $F_*$ is a matrix. It follows from Theorem 3.5 in \cite{PalShalit1} that the polynomial $q(z_1, z_2)=det(F_*^*+F^*z_2-z_1I)$
is $\Gamma$-distinguished if $\omega(F_*)<1$. It can easily be verified that $q$ annihilates $(T_{F_*^*+F_*z},T_z)$.
	\end{proof}
	
	We now prove that every algebraic pure $\Gamma$-isometry has a minimal polynomial. We begin with the following lemma.
	
	\begin{lem} \label{3.9}
		Let $\left(S, P\right)$ be a cyclic pure $\Gamma$-isometry annihilated by an irreducible polynomial $q \in \mathbb{C}[z_1, z_2] .$ Then $q$ divides any polynomial $p$ that satisfies $p(S,P)=0$.	
	\end{lem}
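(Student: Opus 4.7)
The plan is to argue by contradiction via the Taylor joint spectrum. The case $\mathcal{H} = \{0\}$ is vacuous, so assume $\mathcal{H} \neq \{0\}$; then $P$, being a pure isometry on a nonzero space, is by the Wold decomposition a nonzero unilateral shift of some positive multiplicity and hence has spectrum $\sigma(P) = \DC$, an uncountable set. I will show that if $p \in \C[z_1, z_2]$ satisfies $p(S, P) = 0$ and $q \nmid p$, then $\sigma(P)$ must in fact be finite---a contradiction.

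Since $\C[z_1, z_2]$ is a UFD and $q$ is irreducible (hence prime), the assumption $q \nmid p$ forces $\gcd(p, q) = 1$ in $\C[z_1, z_2]$. The scalar cases are easy to dispose of: if $p = 0$ the conclusion is trivial, and if $p$ is a nonzero constant then $p(S, P) = 0$ forces $\mathcal{H} = \{0\}$. So I may assume $p$ is nonconstant. A classical consequence of Bezout's theorem for affine plane curves---equivalently, the fact that $\C[z_1, z_2]/(p, q)$ is a finite-dimensional $\C$-algebra whenever $p$ and $q$ are coprime nonconstants---then shows that the common zero set $Z(p) \cap Z(q) \subset \C^2$ is finite.

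Next, I would invoke the spectral mapping theorem for the Taylor joint spectrum: for every polynomial $f \in \C[z_1, z_2]$, $\sigma(f(S, P)) = f(\sigma_T(S, P))$. Applying this to $p$ and $q$ yields $\sigma_T(S, P) \subseteq Z(p) \cap Z(q)$, so $\sigma_T(S, P)$ is finite. The projection property of the Taylor spectrum, $\sigma(P) = \{\lambda_2 : (\lambda_1, \lambda_2) \in \sigma_T(S, P)\}$, then forces $\sigma(P)$ to be finite as well, contradicting the uncountability recorded above. The contradiction establishes $q \mid p$.

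The main subtlety I anticipate is pinpointing and justifying the Bezout-type statement that $Z(p) \cap Z(q)$ is finite; the remainder of the argument is a clean deployment of standard spectral machinery. An alternative route uses the cyclic vector $u$ directly: the annihilator ideal $I = \{f \in \C[z_1, z_2] : f(S, P) = 0\}$ contains $q$, and if $(q) \subsetneq I$ then $\C[z_1, z_2]/I$ would be a finite-dimensional $\C$-algebra (since $\C[z_1, z_2]/(q)$ has Krull dimension one), which would make $\mathcal{H} = \overline{\C[S, P]u}$ finite-dimensional and preclude $P$ from being a pure isometry. Either route leads to the same conclusion.
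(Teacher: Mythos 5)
Your proof is correct, and it takes a genuinely different route from the paper's. The paper argues through the model theorem: cyclicity forces $\dim\mathcal{D}_{P^*}<\infty$ (Lemma \ref{lem3.6}), so $(S,P)$ is unitarily equivalent to $(T_{F_*^*+F_*z},T_z)$ with a matrix-pencil symbol $\phi$, and the factorization $q(z_1,z_2)I=(z_1I-\phi(z_2))Q(z_1,z_2)$ combined with irreducibility of $q$ gives $q\mid p$ (the Agler--Knese--McCarthy style argument). You instead run a soft spectral argument: the Taylor spectral mapping theorem places $\sigma_T(S,P)$ inside $Z(p)\cap Z(q)$, coprimality of $p$ and $q$ (B\'ezout) makes that intersection finite, and the projection property then forces $\sigma(P)$ to be finite, contradicting $\sigma(P)=\DC$ for a pure isometry on a nonzero space. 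Every ingredient is standard, and the spectral-mapping step is used in exactly this form elsewhere in the paper (e.g.\ in Example \ref{3.14}). Your route is shorter, bypasses the model theorem entirely, and in fact never uses cyclicity, so it proves a stronger statement (the conclusion holds for any commuting pair in which $P$ has infinite spectrum); your alternative sketch via the annihilator ideal and Krull dimension is the variant that genuinely exploits the cyclic vector, and it is also sound. What the paper's proof buys in exchange is the explicit pencil/determinantal picture tying $q$ to the symbol of the model, which is the viewpoint the rest of Sections \ref{sec03}--\ref{sec05} is built on. One cosmetic remark: the case $\HS=\{0\}$ is not vacuous but degenerate (every polynomial annihilates there), so explicitly setting it aside, as you do, is the right move.
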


	\begin{proof}
		
It follows from Theorem \ref{thm:modelpure}, that there is a unitary
operator operator $U:\mathcal{H} \to H^2(\mathcal{D}_{P^*})$ such that
                $
                S=U^*T_\phi U$ and $P=U^*T_zU$,
                where $\phi(z)=F_*^*+F_*z$
                with $F_* \in B(\mathcal{D}_{P^*})$ being the fundamental
operator of
$(S^*,P^*)$. By Lemma \ref{lem3.6}, the space $\mathcal{D}_{P^*}$ is
finite-dimensional. Thus $\phi$ is a matrix-valued linear polynomial. It
is easy to see that
                $
                q(T_\phi, T_z)=Uq(S, P)U^*=0$. Let the polynomial $q$ be given by $q(z_1,
z_2)=\overset{n}{\underset{i,j=0}{\sum}}q_{ij}z_1^iz_2^j$. Since $z_1I$
and $\phi(z_2)$ is a pair of commuting matrices acting on
$\mathcal{D}_{P^*},$ we have
                \begin{equation*}
                        \begin{split}
                                q(z_1, z_2)I=q(z_1I,
z_2I)-q(T_\phi,T_z)&=q(z_1I,
z_2I)-q(\phi(z_2)
,z_2I)\\
                                &=\overset{n}{\underset{i,j=0}{\sum}}q_{ij}z_1^iz_2^jI-\overset{n}{\underset{i,j=0}{\sum}}q_{ij}\phi(z_2)^iz_2^j\\
                                &=\overset{n}{\underset{j=0}{\sum}}\ \
\overset{n}{\underset{i=1}{\sum}}q_{ij}(z_1^i-\phi(z_2)^i)z_2^j\\
                                &=(z_1I-\phi(z_2))Q(z_1, z_2),\\
                        \end{split}
                \end{equation*}
                for some matricial polynomial $Q(z_1, z_2)$. The last
equality follows
from the fact that
                \[
                A^n-B^n=(A-B)(A^{n-1}+A^{n-2}B+ \dotsc + AB^{n-2}+B^{n-1})
                \]
                for any pair of commuting matrices $(A, B)$ and for any $n
\in
\mathbb{N}$. Now, $Q$ is not identically zero because, otherwise $q$
would also be equal to $0$ then, and consequently $Q$ would have a lower
degree in $z_1$ than $q$. So, the non-zero entries of the matrix
polynomial $Q$ cannot vanish identically on $Z(q)$. Since
$(z_1I-\phi(z_2)) Q(z_1, z_2) =q(z_1, z_2) I  =0 $ on $Z(q)$, we have
$z_1Q(z_1, z_2)=\phi(z_2) Q(z_1, z_2)$ on $Z(q)$.
                So, if $p \in \mathbb{C}[z_1, z_2]$ annihilates $(S,P)$
and hence
$(T_\phi,T_z)$, then
                \[
                p(\phi(z_2), z_2I)Q(z_1, z_2)=p(z_1, z_2) Q(z_1, z_2)=0
\quad \text { on
} Z(q).
                \]
Since $q$ is irreducible and $Q$ does not vanish identically on $Z(q)$, we
have that $q$ divides $p$.		 
	\end{proof}
	
The following theorem holds in general for a cyclic subnormal pair (e.g. see \cite{JohnMcCarthy3, Conway4, ConwayJohn5}) and thus holds in particular for a cyclic $\Gamma$-isometry. However, we state the theorem here for a cyclic $\Gamma$-isometry for our purpose. Let us mention here that for any compactly supported measure $\mu$ in $\mathbb{C}^{2}$, $P^{2}(\mu)$ denotes the closure of the polynomials in $L^{2}(\mu)$.
	\begin{thm} \label{3.10}
		Let $(S,P)$ be a cyclic $\Gamma$-isometry on the Hilbert space $\mathcal{H}$, with cyclic vector $u$. Then there is a positive Borel measure $\mu$ on some compact set $K$ in $\mathbb{C}^2$ and a unitary operator $U$ from $\mathcal{H}$ onto $P^{2}(\mu)$ that maps $u$ to the constant function $1$, and such that $U$ intertwines $(S,P)$ with the pair $\left(M_{z_1}, M_{z_2}\right)$ of multiplication by the coordinate functions.	
	\end{thm}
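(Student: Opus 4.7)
The plan is to exploit the subnormality of $(S,P)$ together with the spectral theorem for commuting normal operators. By the definition of a $\Gamma$-isometry, there is a $\Gamma$-unitary extension $(\widetilde S, \widetilde P)$ of $(S, P)$ acting on a Hilbert space $\widetilde{\HS} \supseteq \HS$. Theorem \ref{2.6} tells us that $\widetilde S, \widetilde P$ are commuting normal operators with $\sigma_T(\widetilde S, \widetilde P) \subseteq b\Gamma$. Let $E$ be the joint spectral measure of $(\widetilde S, \widetilde P)$, supported on the compact set $K := \sigma_T(\widetilde S, \widetilde P) \subset \C^2$.

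First I would define the scalar positive Borel measure $\mu$ on $K$ by $\mu(\Delta) = \langle E(\Delta) u, u \rangle$. The functional calculus for commuting normals, together with the invariance of $\HS$ under $(S,P)$ and the identities $S = \widetilde S|_{\HS}$, $P = \widetilde P|_{\HS}$, yields the key identity
\[
\|p(S,P) u\|^2 = \langle p(\widetilde S, \widetilde P)^* p(\widetilde S, \widetilde P) u, u \rangle = \int_K |p(z_1, z_2)|^2 \, d\mu(z_1, z_2)
\]
for every $p \in \C[z_1, z_2]$. Applied to a difference $p - q$, this shows that the assignment $p(S,P) u \mapsto p$ descends to a well-defined linear isometry $U$ from the subspace $\C[S, P] u \subseteq \HS$ into $L^2(\mu)$.

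Next, since $u$ is cyclic, $\C[S, P] u$ is dense in $\HS$, so $U$ extends by continuity to an isometry from $\HS$ into $L^2(\mu)$ whose range is the closure of $\C[z_1, z_2]$ in $L^2(\mu)$, namely $P^2(\mu)$. By construction, $Uu = 1$. The intertwining relations follow from the computations
\[
U\bigl(S \cdot p(S, P) u\bigr) = U\bigl((z_1 p)(S, P) u\bigr) = z_1 \cdot p = M_{z_1}\bigl(U(p(S, P) u)\bigr),
\]
and the analogous identity for $P$, which hold on the dense subspace $\C[S, P] u$ and therefore extend to all of $\HS$ by continuity.

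I do not expect a genuine obstacle here: the argument is the standard way of packaging the spectral theorem for commuting normals through a cyclic vector. The only thing requiring care is that $(S, P)$ itself need not be normal, so one must pass through the $\Gamma$-unitary extension $(\widetilde S, \widetilde P)$ when invoking the joint functional calculus --- but the invariance $\HS \subseteq \widetilde{\HS}$ together with $S = \widetilde S|_{\HS}$ and $P = \widetilde P|_{\HS}$ renders this transition entirely routine.
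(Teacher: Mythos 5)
Your argument is correct and is exactly the standard construction: the paper itself gives no proof of this statement, remarking only that it holds for any cyclic subnormal pair and citing Conway and McCarthy, and the proof in those references is precisely the one you give --- pass to the commuting normal extension, compress its joint spectral measure at the cyclic vector to get $\mu$, and check that $p(S,P)u\mapsto p$ is a well-defined isometry onto the polynomials in $L^2(\mu)$. The only cosmetic remark is that the ``$V$'' in the statement is a typo for the pair $(S,P)$, which you have interpreted correctly.
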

	
	Theorem \ref{3.10} makes it easy to show that the minimal polynomial of an algebraic pure $\Gamma$-isometry is square-free which we show below.
	
	\begin{lem} 
		Suppose $(S,P)$ is a pure $\Gamma$-isometry which is annihilated by $p={\displaystyle \prod_{i=1}^n p_{i}^{t_{i}}}$, where the irreducible factors of $p$ are $p_{1}, \dots , p_n$ with multiplicities $t_{1}, \dots t_n$ respectively.	Let $q={\displaystyle \prod_{i=1}^n p_{i}}$. Then $q(S,P)=0$.
	\end{lem}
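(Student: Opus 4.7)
The plan is to reduce to the cyclic case by restricting to cyclic subspaces and then appeal to the concrete $P^2(\mu)$ model of Theorem \ref{3.10}. It suffices to show $q(S,P)u=0$ for every $u \in \HS$, since $\HS = \overline{\mathrm{span}}\{u : u \in \HS\}$.

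Fix $u \in \HS$ and set $\HS_u := \overline{\C[S,P]u}$. Since $\HS_u$ is invariant under both $S$ and $P$, and a $\Gamma$-isometry extends to a $\Gamma$-unitary on some bigger space (a $\Gamma$-unitary extension of $(S,P)$ still serves as a $\Gamma$-unitary extension for the restriction), the pair
\[
(S_u, P_u) := (S|_{\HS_u},\, P|_{\HS_u})
\]
is a cyclic $\Gamma$-isometry with cyclic vector $u$. Applying Theorem \ref{3.10}, there is a compactly supported positive Borel measure $\mu_u$ on some compact set $K_u \subset \C^2$ and a unitary $U_u : \HS_u \to P^2(\mu_u)$ with $U_u u = 1$, intertwining $(S_u, P_u)$ with the pair of coordinate multiplication operators $(M_{z_1}, M_{z_2})$ on $P^2(\mu_u)$.

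Since $p(S,P)=0$, we have $p(S_u, P_u)=0$, i.e.\ $M_p = 0$ as an operator on $P^2(\mu_u)$. Applying $M_p$ to the constant function $1 \in P^2(\mu_u)$ gives $p = 0$ in $L^2(\mu_u)$, so $p$ vanishes $\mu_u$-almost everywhere. Equivalently, $\mathrm{supp}(\mu_u) \subseteq Z(p)$ up to a $\mu_u$-null set. But $p$ and $q = \prod_{i=1}^n p_i$ share exactly the same irreducible factors, so $Z(q) = Z(p)$, and therefore $q$ vanishes $\mu_u$-almost everywhere as well. Consequently $M_q = 0$ on $P^2(\mu_u)$, i.e.\ $q(S_u, P_u) = 0$. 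In particular,
\[
q(S,P)u = q(S_u, P_u)u = 0.
\]
As $u \in \HS$ was arbitrary, $q(S,P) = 0$.

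The one point that requires care is verifying that the restriction $(S_u, P_u)$ is indeed a $\Gamma$-isometry so that Theorem \ref{3.10} applies; this is handled by the observation above that a $\Gamma$-unitary dilation of $(S,P)$ restricts appropriately. Beyond that, the argument is essentially automatic once the $P^2(\mu_u)$ model is in hand, because the conclusion reduces to the set-theoretic equality $Z(p) = Z(q)$.
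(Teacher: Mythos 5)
Your proposal is correct and follows essentially the same route as the paper: restrict to the cyclic subspace $\overline{\C[S,P]u}$, invoke the $P^{2}(\mu)$ model of Theorem \ref{3.10}, observe that $p$ (hence $q$, since $Z(q)=Z(p)$) vanishes on the support of $\mu$, and conclude $q(S,P)u=0$ for each $u$. The only cosmetic difference is that you phrase the final step as $M_q=0$ on $P^{2}(\mu_u)$ while the paper writes out the corresponding $L^{2}(\mu)$ norm estimate explicitly; the content is identical.
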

	
	\begin{proof} Let $(S, P)$ be a pure $\Gamma$-isometry acting on a Hilbert space $\HS$. We need to show that $q(S,P)u=0$ for every $u \in \mathcal{H}$. For an arbitrary vector $u$ in $\mathcal{H}$, consider the space $	\mathcal{K}=\overline{\mathbb{C}[S,P] u}$.
		Suppose $(S',P')=(S,P)|_{\mathcal K}$. By Theorem \ref{3.10}, there is a unitary $U : \HS \to P^2(\mu)$ such that  $(S',P')=(U^*M_{z_1}U, U^*M_{z_2}U)$ for some positive Borel measure $\mu$ on some compact set. Thus
		\[
		0=p(S,P)=p(S',P')=p(M_{z_1}, M_{z_2}).
		\]
		In particular, $p(M_{z_1}, M_{z_2})1=p(z_1, z_2)=0$ on the support of $\mu$. Note that $q$ contains each irreducible factor of $p$. Therefore, $q$ vanishes identically on the support of $\mu$, and so we have 
		\begin{equation*}
			\begin{split}
				\|q(S,P)u\|^2&=\|q(U^*M_{z_1}U, U^*M_{z_2}U)u\|^2\\
				&=\|U^*q(M_{z_1}, M_{z_2})Uu\|^2\\
				&=\|q(M_{z_1}, M_{z_2})f\|_{P^2(\mu)}^2 \quad [f=Uu] \\
				&=\int|q(M_{z_1}, M_{z_2})f|^{2} d \mu\\
				&=\int|(q(M_{z_1}, M_{z_2}).1)(f)|^{2} d \mu\\
				& \leq \int|q(z_1, z_2)|^{2} d \mu . \int|f|^{2} d \mu \ \\
				&=0 \quad [\because q=0 \text{ on the support of } \mu].\\
			\end{split}
		\end{equation*}
		Hence, $q(S,P)u=0$. Since $u$ is arbitrary, we have $q(S,P)=0$. This finishes the proof.
	\end{proof}
	
	\begin{lem}\label{3.12}
		Suppose a pure $\Gamma$-isometry $(S,P)$ is annihilated by a polynomial $q$, where $q$ is a product of distinct irreducible factors and $(S,P)$ is not annihilated by any factor of $q$. Then $q$ divides any polynomial that annihilates $(S,P)$.
	\end{lem}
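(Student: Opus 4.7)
The plan is to reduce to the irreducible case already handled in Lemma \ref{3.9}. Fix an irreducible factor $p_i$ of $q = p_1 \cdots p_n$ and set $q_i = q/p_i$. By hypothesis the proper factor $q_i$ does not annihilate $(S,P)$, so there exists $u_i \in \HS$ with $v_i := q_i(S,P) u_i \neq 0$. Since $p_i q_i = q$ annihilates $(S,P)$, we have $p_i(S,P) v_i = 0$, so $v_i$ is a nonzero vector in $\ker p_i(S,P)$.

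Next I would pass to the cyclic invariant subspace $\mathcal{K}_i := \overline{\C[S,P] v_i}$ and consider the restriction $(S_i, P_i) := (S|_{\mathcal{K}_i}, P|_{\mathcal{K}_i})$. It is cyclic with cyclic vector $v_i$ by construction. Since $p_i(S,P)$ commutes with $S$ and $P$, we have $p_i(S,P) r(S,P) v_i = r(S,P) p_i(S,P) v_i = 0$ for every polynomial $r$, and by density $p_i(S_i, P_i) = 0$. To conclude that $(S_i, P_i)$ is a pure $\Gamma$-isometry I would use two facts. First, by the defining extension property of a $\Gamma$-isometry, there is a $\Gamma$-unitary $(\widetilde{S}, \widetilde{P})$ on an enlarged space $\widetilde{\HS} \supseteq \HS$ with $\HS$ invariant and $S = \widetilde{S}|_{\HS}$, $P = \widetilde{P}|_{\HS}$; since $\mathcal{K}_i \subseteq \HS$ is invariant under $S, P$, it is also invariant under the extension, so $(S_i, P_i)$ is again the restriction of a $\Gamma$-unitary to an invariant subspace, hence a $\Gamma$-isometry. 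Second, an invariant subspace of a pure isometry is itself a pure isometry, because $\bigcap_n P_i^n \mathcal{K}_i \subseteq \bigcap_n P^n \HS = \{0\}$, which is equivalent to purity of $P_i$ via the Wold decomposition.

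Now Lemma \ref{3.9} applies to the cyclic pure $\Gamma$-isometry $(S_i, P_i)$ annihilated by the irreducible polynomial $p_i$, yielding that every polynomial that annihilates $(S_i, P_i)$ is divisible by $p_i$. In particular, for any $p$ with $p(S,P) = 0$ we also have $p(S_i, P_i) = 0$, and therefore $p_i \mid p$ for each $i$. Since $p_1, \dots, p_n$ are pairwise distinct irreducibles in the UFD $\C[z_1, z_2]$, the product $q = p_1 \cdots p_n$ divides $p$, which completes the argument.

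The main obstacle is the verification that the restriction of $(S,P)$ to the not-necessarily-reducing cyclic invariant subspace $\mathcal{K}_i$ remains a cyclic pure $\Gamma$-isometry. Neither the identity $S^*P = S$ nor the purity of $P$ pass obviously to such compressions, but both do go through: the $\Gamma$-isometric property via the ambient $\Gamma$-unitary extension, and purity via the Wold-decomposition characterization $\bigcap_n P^n \HS = \{0\}$.
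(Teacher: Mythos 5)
Your proof is correct and follows essentially the same route as the paper: factor out one irreducible $p_i$, use the non-vanishing of $q/p_i$ to produce a nonzero cyclic vector, restrict to the cyclic invariant subspace, and invoke Lemma \ref{3.9}. The only difference is that you explicitly verify that the restriction to the (non-reducing) cyclic invariant subspace is again a pure $\Gamma$-isometry, a point the paper asserts without proof; your verification via the ambient $\Gamma$-unitary extension and the Wold characterization $\bigcap_n P^n\mathcal{H}=\{0\}$ is sound.
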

	\begin{proof} 
		Let $q_{0}$ be an irreducible factor and write $q=q_{0} q_{1}$. Since, $(S,P)$ is not annihilated by any of these factors, there exists $u_0 \in \mathcal{H}$ such that $u:=q_{1}(S,P) u_{0} \neq 0$. Let $\mathcal K$ be the cyclic subspace generated by $u$, i.e.,
		\[
		\mathcal{K}:=\overline{\mathbb{C}[S,P] u}=\overline{\mbox{span}}\{g(S,P) u: g \in \mathbb{C}[z_1,z_2]\}.
		\]
		Needless to mention, $\mathcal K$ is a joint invariant subspace for $(S,P)$. Let $(S',P')=\left (S|_\mathcal{K}, \ P|_\mathcal{K} \right)$, which is a pure cyclic $\Gamma$-isometry annihilated by $q_0$. By Lemma \ref{3.9}, $q_0$ divides every polynomial that annihilates $(S',P')$. If $g(S,P)=0$ for some $g \in \mathbb{C}[z_1, z_2]$, then $g(S',P')=0$ and Lemma \ref{3.9} implies that $q_{0}$ divides $g $. Since $q_{0}$ is arbitrary, every irreducible factor of $q$ divides $g$. Hence $q$ divides $g$ and the proof is complete.		
	\end{proof}
		
Combining all these results, we arrive at the following main theorem of this Section.
	
	\begin{thm} \label{thm:0312}
		Let $(S,P)$ be an algebraic pure $\Gamma$-isometry. Then there exists a square-free polynomial $q$ that annihilates $(S,P) $. Moreover, if $p$ is any polynomial that annihilates $(S,P)$, then $q$ divides $p$.	
	\end{thm}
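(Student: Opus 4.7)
The plan is to combine the two preceding lemmas with a short finite iteration. Since $(S,P)$ is algebraic, pick any polynomial $p \in \mathbb C[z_1,z_2]$ with $p(S,P)=0$. Factor $p$ into its distinct irreducible factors with multiplicities as $p = \prod_{i=1}^{n} p_i^{t_i}$. By the lemma immediately preceding Lemma \ref{3.12}, the radical $q_0 = \prod_{i=1}^{n} p_i$ is square-free and also annihilates $(S,P)$. So the first half of the theorem (existence of a square-free annihilator) is already in hand; what remains is to arrange that this annihilator has the minimality property required by Lemma \ref{3.12}, namely that no proper (irreducible) factor annihilates $(S,P)$.

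To achieve this, I would run a finite pruning procedure on $q_0$. Inspect each irreducible factor $p_i$: if the polynomial $q_0/p_i$ still annihilates $(S,P)$, replace $q_0$ by $q_0/p_i$; otherwise keep $p_i$. Since $q_0$ has only finitely many irreducible factors, this process halts after at most $n$ steps, producing a square-free polynomial $q$ that annihilates $(S,P)$ and such that no proper (irreducible) factor of $q$ annihilates $(S,P)$. The polynomial $q$ is still a product of distinct irreducibles because it is a divisor of the square-free $q_0$.

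Now I invoke Lemma \ref{3.12}. The hypothesis of that lemma is exactly that $q$ is a product of distinct irreducible factors and that no factor of $q$ annihilates $(S,P)$, both of which hold by construction. The conclusion of Lemma \ref{3.12} then gives: every polynomial that annihilates $(S,P)$ is divisible by $q$. This simultaneously yields the divisibility part of the theorem and, as a side effect, uniqueness of $q$ up to a scalar multiple.

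The only potentially delicate point is verifying that the pruning step is well-defined, i.e. that whenever we drop a factor $p_i$ we actually land on a genuine annihilator rather than merely shrinking $q_0$; but this is built into the test we use to decide whether to drop $p_i$, so there is no real obstacle. In short, the theorem follows essentially by assembling: the square-free-radical lemma (to produce $q_0$), a finite-step minimization over the factors of $q_0$ (to arrive at $q$), and Lemma \ref{3.12} (to get the divisibility conclusion).
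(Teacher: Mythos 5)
Your proposal is correct and matches the paper's intended argument: the paper gives no written proof for this theorem, stating only that it follows by "combining all these results," i.e.\ exactly the assembly you describe (pass to the square-free radical via the preceding lemma, prune irreducible factors until no proper cofactor annihilates the pair, then invoke Lemma \ref{3.12} for divisibility). Your explicit finite pruning step, together with the observation that dropping further factors only shrinks the cofactors and so preserves the non-annihilation property, fills in the detail the paper leaves implicit.
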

	
	\section{The symmetrization map and algebraic $\Gamma$-isometries}\label{sec04}
	

\noindent 	The literature (e.g. see Theorem \ref{1.2}) tells us that every algebraic pair of commuting pure isometries is annihilated by a toral polynomial. A natural question arises, if every algebraic pure $\Gamma$-isometry is also annihilated by a $\Gamma$-distinguished polynomial. The next example shows that this is not true in general. The main reason behind this is that not every pure $\Gamma$-isometry arises as a symmetrization of a pure isometric pair. We justify this by an example after the following lemma.
\begin{lem}\label{q_A,0}	
Let $A \in \mathcal{B}(\mathcal H)$ for some Hilbert space $\HS$. If the pair $(T_{A+A^*z}, T_z)$ on $H^2(\mathcal H)$ is annihilated by a polynomial $q \in \C[z_1, z_2]$, then  $q(A, 0)=0$.
\end{lem}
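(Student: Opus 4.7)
The key idea is to test the operator identity $q(T_\phi, T_z) = 0$, with $\phi(z) = A + A^* z$, on constant functions in $H^2(\mathcal{H})$ and then read off information at $z = 0$.

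First I would recall that, since $\phi$ and $z$ are both analytic (indeed polynomial) symbols, the composition rule $T_\psi T_\chi = T_{\psi \chi}$ holds whenever $\chi$ is analytic. In particular $T_\phi^i\, T_z^j = T_{\phi(z)^i z^j}$ for all $i,j \geq 0$, and therefore, writing $q(z_1,z_2) = \sum_{i,j} c_{ij} z_1^i z_2^j$, one gets
\[
q(T_\phi, T_z) \;=\; \sum_{i,j} c_{ij}\, T_{\phi(z)^i z^j} \;=\; T_{q(\phi(z),\,z)}.
\]
Here $q(\phi(z), z)$ is a polynomial in $z$ with coefficients in $\mathcal{B}(\mathcal{H})$, hence a bounded analytic $\mathcal{B}(\mathcal{H})$-valued function on $\mathbb{D}$, so the Toeplitz operator is well-defined.

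Next, fix an arbitrary $h \in \mathcal{H}$ and view it as the constant function $z \mapsto h$ in $H^2(\mathcal{H})$. Applying the identity above gives
\[
\bigl(q(T_\phi,T_z) h\bigr)(z) \;=\; q\bigl(\phi(z),\,z\bigr) h,
\]
which is an element of $H^2(\mathcal{H})$ depending polynomially on $z$. By hypothesis, $q(T_\phi, T_z) = 0$, so the left-hand side is the zero function. Since an $H^2(\mathcal{H})$-function is determined by its values, evaluating at $z = 0$ gives $q(\phi(0), 0) h = 0$. But $\phi(0) = A$, so $q(A,0) h = 0$.

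Finally, as $h \in \mathcal{H}$ was arbitrary, we conclude $q(A,0) = 0$, which is the claim. There is essentially no obstacle here beyond the bookkeeping with analytic-symbol Toeplitz calculus; the content of the lemma is that the point evaluation of $q(\phi(z),z)$ at $z=0$ is a faithful witness to the operator identity when tested against constants.
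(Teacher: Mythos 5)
Your proof is correct and follows essentially the same route as the paper's: apply $q(T_{A+A^*z},T_z)$ to a constant function $h$ and evaluate at $z=0$, using that the Toeplitz (multiplication) calculus sends $q(T_\phi,T_z)$ to multiplication by $q(\phi(z),z)$, so that $q(A,0)h=0$ for every $h$. Your intermediate identity $q(T_\phi,T_z)=T_{q(\phi(z),z)}$ just makes explicit what the paper's shorter computation uses implicitly.
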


\begin{proof}
For any $h_0 \in \mathcal{H}$, the map $f_0: \D \to \mathcal{H}$ given by $f_0(z)=h_0$ is in $H^2(\mathcal{H})$. By definition, $T_z(f_0)(0)=0$ and $T_{A+A^*z}(f_0)(0)=Af_0(0)=Ah_0$. Thus, $q(A, 0)h_0=q(T_{A+A^*z}, T_z)(f_0(0))=0$ and so $q(A, 0)=0$.
\end{proof} 
	\begin{eg}\label{3.14}
		Consider the matrix
		\[
		A=\begin{pmatrix}
			0 & 2 & 0\\
			0 & 0 & 0\\
			0 & 0 & 1\\
		\end{pmatrix}.
		\]
		Then the numerical radius $\omega(A)$ is equal to $1$. By Theorem \ref{thm:modelpure}, the Toeplitz pair $(T_{A+A^*z}, T_z)$ on $H^2(\mathbb{C}^3)$ is a pure $\Gamma$-isometry. It can be easily verified that the polynomial $p(z_1, z_2)=det(A+A^*z_2-z_1I)$ annihilates $(T_{A+A^*z}, T_z)$. If possible let $(T_{A+A^*z}, T_z)$ be annihilated by a $\Gamma$-distinguished polynomial $q(z_1, z_2)$. Then $q(A, 0)=0$ by Lemma \ref{q_A,0}. It follows from spectral mapping theorem that
		\[
		\{ 0 \} =\sigma(q(A,0))=\{ q(\lambda , 0)\,:\, \lambda \in \sigma(A) \}.
		\]
		Since $\sigma(A)=\{0,1\},$ we have that $q(1,0)=0$. Note that the point $(1,0) \in \partial \Gamma = \Gamma \setminus \mathbb G_2$ by being the symmetrization of the points $0,1$. The fact that $q$ is $\Gamma$-distinguished implies that 
		$
		(1,0) \in Z(q) \cap \partial \Gamma \subseteq b\Gamma $. This is a contradiction as $(1,0) \notin b\Gamma$, because, every point in $b\Gamma$ is of the form $(z_1+z_2,z_1z_z)$ with unimodular $z_1,z_2$. Hence, $(T_{A+A^*z}, T_z)$ is an algebraic pure $\Gamma$-isometry but no $\Gamma$-distinguished polynomial can annihilate it. \qed
	\end{eg}
	
As we have mentioned above that the reason behind an algebraic pure
$\Gamma$-isometry being not $\Gamma$-distinguished in general is that not
every $\Gamma$-isometry arises as the symmetrization of commuting
isometries. The pair $(T_{A+A^*z}, T_z)$, in Example \ref{3.14}, is a
pure $\Gamma$-isometry acting on $H^2(\C^3)$ and it is annihilated by
$p(z_1, z_2)=det(A+A^*z_2-z_1I)$. We prove that $(T_{A+A^*z}, T_z)$
cannot be the symmetrization of a pair of commuting pure isometries. Let
if possible, $(T_{A+A^*z}, T_z)=\pi(V_1, V_2)$ for a pair of commuting
pure isometries $(V_1, V_2)$ on $H^2(\mathbb{C}^3)$. Then the polynomial
$p\circ \pi$ annihilates $(V_1, V_2)$. We have by Theorem \ref{1.2} that
there exists $q \in \mathbb{C}[z_1, z_2]$ such that $q$ annihilates
$(V_1, V_2)$ and $Z(q) \subseteq \mathbb{D}^2 \cup \mathbb{T}^2 \cup
\mathbb{E}^2$. Thus, $Z(q) \cap \partial \overline{\mathbb{D}}^2 \subseteq
\mathbb{T}^2 $. The polynomial $q_s(z_1, z_2)=q(z_1, z_2)q(z_2, z_1)$
also annihilates $(V_1, V_2)$ with $Z(q_s) \subseteq \mathbb{D}^2 \cup
\mathbb{T}^2 \cup \mathbb{E}^2$. Since $q_s$ is symmetric, one can choose
$\Tilde{q} \in \mathbb{C}[z_1, z_2]$ such that $q_s=\Tilde{q}\circ \pi$.
By Example \ref{3.3}, $\Tilde{q}$ is distinguished and so, $Z(\Tilde{q})
\cap \partial \mathbb{G}_2 \subseteq b\Gamma$. Moreover,
$\Tilde{q}(T_{A+A^*z}, T_z)=\Tilde{q}\circ \pi(V_1, V_2)=q_s(V_1,
V_2)=0$. By Lemma \ref{q_A,0}, $\Tilde{q}(A,0)=0$. Let $f(z_1,
z_2)=z_1\Tilde{q}(z_1, z_2)$. Evidently, $(0, 0) \in Z(f) \cap
\mathbb{G}_2$. Since $Z(f)=(\{0\} \times \C )\cup Z(\Tilde{q})$, we have
for any $(s, p) \in Z(f) \cap \partial \Gamma$ that either $(s, p)
\in Z(\tilde{q}) \cap \partial \Gamma \subseteq b\Gamma$ or $s=0$
and $(0, p) \in \partial \Gamma$. In the latter case, there exists
$(z_1, z_2) \in \partial \overline{\D}^2=(\DC \times \T) \cup (\T \times \DC)$ such
that $z_1+z_2=0$ and $z_1z_2=p$. Thus, $|z_1|=|z_2|=1$ and so, $(0,
p)=\pi(z_1, -z_1)\in \pi(\T^2)=b\Gamma$. Consequently, $f$ is a
$\Gamma$-distinguished polynomial and that $f(A, 0)=0$. This gives a 
contradiction to the fact that $(A, 0)$ cannot be annihilated by any
$\Gamma$-distinguished polynomial as shown in Example \ref{3.14}.
Therefore, $(T_{A+A^*z}, T_z)$ cannot arise as the symmetrization of a
commuting pair of pure isometries.

        \medskip

        The symmetrization of a pair of commuting pure isometries is always a
pure $\Gamma$-isometry but the converse does not hold as discussed above.
Also, we present below a simple example showing that not every pure
$\Gamma$-isometry arises as the symmetrization of commuting isometries.
	
	\begin{eg}
		Consider the pair $(0,T_z)$ on $H^2(\mathbb{D})$ which is annihilated by the polynomial $q(z_1, z_2)=z_1$. Agler and Young proved in \cite{AglerII16} that a commuting pair of operators $(T,V)$ is a pure $\Gamma$-isometry if and only if $\|T\|\leq 2$, $T=T^*V$ and $V$ is a pure isometry, i.e., unitarily equivalent to a shift operator on a Hardy space. It follows from here that $(0,T_z)$ is a pure $\Gamma$-isometry. If this pair is symmetrization of a pair of commuting isometries, say $(V_1, V_2)$ on $H^2(\mathbb{D}),$ then $V_1+V_2=0$ and $V_1V_2=T_z$ and consequently we have $-V_2^2=T_z$. This is a contradiction, because the shift operator $T_z$ cannot be written as a square of an operator. \qed
			\end{eg}
	
	The above examples lead to the question, if we can characterize all $\Gamma$-isometries which are symmetrization of commuting isometries. We borrow techniques as in \cite{PalSourav11} by the first named author of this article and obtain a necessary and sufficient condition for the same.	
	
	\begin{thm} 
		Let $(S,P)$ be a $\Gamma$-isometry acting on a Hilbert space $\mathcal{H}$. Then $S=V_1+V_2$ and $P=V_1V_2$ for a pair of commuting isometries $V_1, V_2$ on $\mathcal{H}$ if and only if $S^2-4P$ has a square root $\Delta$ such that $\Delta$ commutes with $S,P$ and $\frac{1}{2}(S \pm \Delta)$ are isometries.
	\end{thm}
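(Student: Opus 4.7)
The proof is essentially a bijection between pairs of commuting isometries $(V_1,V_2)$ summing and multiplying to $(S,P)$ and ``square roots of the discriminant'' $\Delta$ satisfying (1) and (2). The forward direction just identifies the correct $\Delta$, and the converse inverts the quadratic-formula identity $V_{1,2}=\tfrac{1}{2}(S\pm\Delta)$.

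\medskip

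\noindent \textbf{Forward direction.} Suppose $S=V_1+V_2$ and $P=V_1V_2$ with $V_1,V_2$ commuting isometries on $\HS$. Set $\Delta=V_1-V_2$. A direct expansion using $V_1V_2=V_2V_1$ yields
\[
\Delta^{2}=(V_1-V_2)^{2}=V_1^{2}-2V_1V_2+V_2^{2}=(V_1+V_2)^{2}-4V_1V_2=S^{2}-4P,
\]
so $\Delta$ is a square root of $S^{2}-4P$. For (1), observe that $\Delta S=(V_1-V_2)(V_1+V_2)=V_1^{2}-V_2^{2}=(V_1+V_2)(V_1-V_2)=S\Delta$, and $\Delta P=(V_1-V_2)V_1V_2=V_1V_2(V_1-V_2)=P\Delta$ by commutativity. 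For (2), note that $\tfrac{1}{2}(S+\Delta)=V_1$ and $\tfrac{1}{2}(S-\Delta)=V_2$, which are isometries by hypothesis.

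\medskip

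\noindent \textbf{Converse direction.} Assume $\Delta$ is a square root of $S^{2}-4P$ satisfying (1) and (2). Define
\[
V_1:=\tfrac{1}{2}(S+\Delta),\qquad V_2:=\tfrac{1}{2}(S-\Delta).
\]
By hypothesis (2), both $V_1$ and $V_2$ are isometries, and clearly $V_1+V_2=S$. Using $\Delta S=S\Delta$ from (1), we compute
\[
V_1V_2=\tfrac{1}{4}(S+\Delta)(S-\Delta)=\tfrac{1}{4}(S^{2}-\Delta^{2})=\tfrac{1}{4}(S^{2}-(S^{2}-4P))=P,
\]
and the same identity with the roles of $V_1,V_2$ swapped gives $V_2V_1=\tfrac{1}{4}(S^{2}-\Delta^{2})=P$, so $V_1$ and $V_2$ commute. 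Thus $(V_1,V_2)$ is the required pair of commuting isometries with $S=V_1+V_2$ and $P=V_1V_2$.

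\medskip

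\noindent \textbf{Main difficulty.} The entire argument is elementary algebra once one identifies the ``discriminant'' operator $\Delta=V_1-V_2$; there is no serious obstacle. The only place where care is needed is the verification that $V_1$ and $V_2$ commute in the converse direction, since a priori $(S+\Delta)(S-\Delta)$ and $(S-\Delta)(S+\Delta)$ differ by $2(\Delta S-S\Delta)$, and it is precisely commutativity of $\Delta$ with $S$ in hypothesis (1) that forces this difference to vanish. Commutativity of $\Delta$ with $P$ is not needed for this characterization, but it is automatic from $\Delta^2=S^2-4P$ combined with $\Delta S=S\Delta$ (since $\Delta P=\tfrac{1}{4}\Delta(S^2-\Delta^2)=\tfrac{1}{4}(S^2-\Delta^2)\Delta=P\Delta$), so listing it as a separate condition is merely for convenience.
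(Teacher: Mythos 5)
Your proof is correct and follows essentially the same route as the paper: take $\Delta=V_1-V_2$ in the forward direction and recover $V_{1,2}=\tfrac{1}{2}(S\pm\Delta)$ via the quadratic-formula identity in the converse. The extra observation that $\Delta P=P\Delta$ is automatic from $\Delta S=S\Delta$ and $\Delta^2=S^2-4P$ is a correct (if minor) refinement not made explicit in the paper.
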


	\begin{proof} 
		Let $(V_1, V_2)$ be two commuting isometries such that $S=V_1+V_2$ and $P=V_1V_2$. Then $S^2-4P=(V_1-V_2)^2$ and hence, $S^2-4P$ has a square root, say, $\Delta=V_1-V_2$ which commutes with $S$ and $P$. Clearly, 
		\[
		\left(\frac{1}{2}(S+ \Delta), \frac{1}{2}(S- \Delta)\right)=(V_1, V_2),
		\]
		which is a commuting pair of isometries. Conversely, suppose $S^2-4P$ has a square root $\Delta$ that commutes with $S,P$ and $\frac{1}{2}(S \pm \Delta)$ are isometries. Set 
		$
			V_1=\frac{1}{2}(S + \Delta)$ and $V_2=\frac{1}{2}(S - \Delta)$. Then $(V_1, V_2)$ is a commuting pair of isometries such that $S=V_1+V_2$ and $P=V_1V_2$.
	\end{proof}

	The following corollary is an easy consequence of the above theorem.
	
	\begin{cor} \label{cor:044}
		
		Let $(S,P)$ be a pure $\Gamma$-isometry acting on a Hilbert space $\mathcal{H}$. Then $S=V_1+V_2$ and $P=V_1V_2$ for a pair of commuting pure isometries $V_1, V_2$ on $\mathcal{H}$ if and only if $S^2-4P$ has a square root $\Delta$ such which commutes with $S$ and $P$ and $\frac{1}{2}(S \pm \Delta)$ are pure isometries.
	\end{cor}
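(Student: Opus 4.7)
The plan is to obtain Corollary \ref{cor:044} as an immediate consequence of the preceding theorem, by checking that the added word \emph{pure} on each side of the equivalence propagates through the construction already given there. Concretely, the theorem already establishes the bijection between factorizations $S=V_1+V_2$, $P=V_1V_2$ by commuting isometries and square roots $\Delta$ of $S^2-4P$ commuting with $S,P$ for which $\tfrac{1}{2}(S\pm\Delta)$ are isometries; the formulas in both directions are $\Delta=V_1-V_2$ and conversely $V_1=\tfrac{1}{2}(S+\Delta)$, $V_2=\tfrac{1}{2}(S-\Delta)$. So I only need to show the pureness hypothesis on one side matches exactly with the pureness hypothesis on the other.

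For the forward direction, suppose $V_1,V_2$ is a commuting pair of pure isometries with $S=V_1+V_2$ and $P=V_1V_2$. By the theorem, $\Delta:=V_1-V_2$ is a square root of $S^2-4P$ commuting with $S$ and $P$. The point is that $\tfrac{1}{2}(S+\Delta)=V_1$ and $\tfrac{1}{2}(S-\Delta)=V_2$, which are pure isometries by hypothesis; so the added pureness requirement on $\tfrac{1}{2}(S\pm\Delta)$ is satisfied verbatim. For the converse, assume $\Delta$ is a square root of $S^2-4P$ commuting with $S,P$, and that $V_1:=\tfrac{1}{2}(S+\Delta)$ and $V_2:=\tfrac{1}{2}(S-\Delta)$ are pure isometries. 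The theorem supplies, without any extra work, the identities $S=V_1+V_2$, $P=V_1V_2$ and the commutativity of $V_1,V_2$; pureness is assumed, so we are done.

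The principal thing to note, rather than an obstacle, is that the existing theorem's formulas are symmetric in the passage between $(V_1,V_2)$ and $\Delta$: the same relations $V_1=\tfrac{1}{2}(S+\Delta)$ and $V_2=\tfrac{1}{2}(S-\Delta)$ govern both directions. Consequently, the property \emph{pure isometry} of $V_1,V_2$ is literally the same statement as the property \emph{pure isometry} of $\tfrac{1}{2}(S\pm\Delta)$, and the corollary follows from the theorem with no additional computation. One might anticipate needing to verify that $P$ being a pure isometry (which is automatic since $(S,P)$ is a pure $\Gamma$-isometry) is compatible with $V_1,V_2$ being pure, but this compatibility plays no role in the proof: we neither derive pureness of $V_j$ from pureness of $P$ nor vice versa; both the hypothesis and the conclusion simply mention pureness of the same operators $\tfrac{1}{2}(S\pm\Delta)$.
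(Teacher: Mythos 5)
Your proposal is correct and matches the paper's intent exactly: the paper derives the corollary as an immediate consequence of the preceding theorem, with the pureness hypothesis simply carried along through the identities $\Delta=V_1-V_2$ and $V_{1,2}=\tfrac{1}{2}(S\pm\Delta)$, which is precisely what you verify.
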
 
	The conditions that $\frac{1}{2}(S \pm \Delta)$ are pure in Corollary \ref{cor:044} cannot be ignored and the following example explains this.
	\begin{eg}
		Let us consider the following matrices: 
		\[
			A=\begin{pmatrix}
				0 & 2 & 0 \\
				0 & 0 & 0\\
				0 & 0 & 1\\
			\end{pmatrix} \quad \text{and} \quad E=\begin{pmatrix}
			0 & 0 & 0 \\
			0 & 0 & 0\\
			0 & 0 & 1\\
		\end{pmatrix}.
		\]
		As mentioned in Example \ref{3.14} and discussion thereafter, $\omega(A)=1$ and the pure $\Gamma$-isometry $(S,P)=(T_{A+A^*z},T_z)$ on $H^2(\mathbb{C}^3)$
		cannot arise as the symmetrization of a commuting pair of pure isometries. 
		Here, we show that $S^2-4P$ has a square root $\Delta$ which commutes with $S, P$ and $\frac{1}{2}(S \pm \Delta)$ are commuting isometries. It is evident that $A^2=A^{*2}=E$ and $A^*A+AA^*+2E=4I$. Define $\Delta:=T_{E-Ez}$ on $H^2(\mathbb{C}^3)$ whose replica in $l^2(\C^3)$ is given by the following block matrix:
		\begin{equation*}
			\Delta=  \begin{bmatrix} 
				E & 0 & 0 & 0 & \dotsc \\
				-E & E & 0 & 0 & \dotsc \\
				0 & -E & E & 0 & \dotsc\\
				0 & 0 & -E & E & \dotsc\\
				\dotsc & \dotsc & \dotsc & \dotsc & \dotsc \\
			\end{bmatrix}.
		\end{equation*}
		We now show that $S^2-4P=\Delta^2$. 
		\begin{equation*}
			\begin{split}
				S^2-4P&=  \begin{bmatrix} 
					A & 0 & 0 & 0 & \dotsc \\
					A^* & A & 0 & 0 & \dotsc \\
					0 & A^* & A & 0 & \dotsc\\
					0 & 0 & A^* & A & \dotsc\\
					\dotsc & \dotsc & \dotsc & \dotsc & \dotsc \\
				\end{bmatrix}  \begin{bmatrix} 
					A & 0 & 0 & 0 & \dotsc \\
					A^* & A & 0 & 0 & \dotsc \\
					0 & A^* & A & 0 & \dotsc\\
					0 & 0 & A^* & A & \dotsc\\
					\dotsc & \dotsc & \dotsc & \dotsc & \dotsc \\
				\end{bmatrix}-4  \begin{bmatrix} 
					0 & 0 & 0 & 0 & \dotsc \\
					I & 0 & 0 & 0 & \dotsc \\
					0 & I & 0 & 0 & \dotsc\\
					0 & 0 & I & 0 & \dotsc\\
					\dotsc & \dotsc & \dotsc & \dotsc & \dotsc \\
				\end{bmatrix}\\
				&=  \begin{bmatrix} 
					A^2 & 0 & 0 & 0 & \dotsc \\
					A^*A+AA^*-4I & A^2 & 0 & 0 & \dotsc \\
					A^{*2} & A^*A+AA^*-4I & A^2 & 0 & \dotsc\\
					0 & A^{*2} & A^*A+AA^*-4I & A^2 & \dotsc\\
					\dotsc & \dotsc & \dotsc & \dotsc & \dotsc \\
				\end{bmatrix}\\
				&=\begin{bmatrix} 
					E & 0 & 0 & 0 & \dotsc \\
					-2E & E & 0 & 0 & \dotsc \\
					E & -2E & E & 0 & \dotsc\\
					0 & E & -2E & E & \dotsc\\
					\dotsc & \dotsc & \dotsc & \dotsc & \dotsc \\
				\end{bmatrix}=\Delta^2.\\
			\end{split}
		\end{equation*}
Thus, $S^2-4P$ has a square root $\Delta$. Again, a few steps of calculation give $S\Delta=\Delta S$. Similarly, one can show that $\Delta P=P \Delta$. It is now easy to see that 
\begin{small}	
		\begin{equation*}
			S+\Delta=  \begin{bmatrix} 
				A+E & 0 & 0 & 0 & \dotsc \\
				A^*-E & A+E & 0 & 0 & \dotsc \\
				0 & A^*-E & A+E & 0 & \dotsc\\
				0 & 0 & A^*-E & A+E & \dotsc\\
				\dotsc & \dotsc & \dotsc & \dotsc & \dotsc \\
			\end{bmatrix} \ , \  S-\Delta= \begin{bmatrix} 
				A-E & 0 & 0 & 0 & \dotsc \\
				A^*+E & A-E & 0 & 0 & \dotsc \\
				0 & A^*+E & A-E & 0 & \dotsc\\
				0 & 0 & A^*+E & A-E & \dotsc\\
				\dotsc & \dotsc & \dotsc & \dotsc & \dotsc \\
			\end{bmatrix}.
		\end{equation*}
	\end{small}
	It follows from here that $(S+\Delta)^*(S+\Delta)= 4I$. Similarly, we have $(S-\Delta)^*(S-\Delta)=4I$. \qed	
	\end{eg}
	
		\section{The distinguished boundary of a distinguished variety in $\D^2$ and $\G$}\label{sec06}
		
	
	\noindent Recall that the distinguished boundary $bX$ of a compact set $X \subset \C^n$ is the smallest closed subset of $X$ on which every member in $Rat(X)$ attains its maximum modulus. Also, $bX$ is the Shilov boundary of the algebra $Rat(X)$. It follows from the maximum principle that $bX$ is always contained in the topological boundary $\partial X$. However, $bX$ can be thinner than $\partial X$ in higher dimensions. For example, the closed disc $\overline{\D}$ has distinguished boundary $\T$ which is also its topological boundary but for the closed bidisc $\overline{\D}^2$ the topological boundary is $(\overline{\D} \times \T) \cup (\T \times \overline{\D})$ which is much bigger than the distinguished boundary $\T^2$. Clearly, if there is a function $f \in Rat(X)$ and a point $x \in X$ such that $f(x)=1$ and $|f(y)|<1$ for all $y \in X \setminus \{x\}$, then $x \in bX$. Such a point $x$ is said to be a \textit{peak point} of $X$ and the function $f$ is called a \textit{peaking function} for $x$.
	
	\smallskip

	The distinguished boundary plays a significant role in both complex-function theory and operator theory associated with a domain. A seminal work due to Arveson (e.g. see Corollary to Theorem 1.2.2 in \cite{ArvesonI} or Theorem \ref{thm_Arveson} of this paper) states that a commuting operator tuple $\underline{T}=(T_1, \dots , T_n)$ has $X$ as a complete spectral set if and only if $\underline{T}$ possesses a normal dilation $\underline{N}=(N_1, \dots , N_n)$ such that the Taylor joint spectrum $\sigma_T(\underline{N}) \subseteq bX$. We intend to study Arveson's theorem when a pair of toral contractions or a $\Gamma$-distinguished $\Gamma$-contraction admits a toral unitary dilation or a $\Gamma$-distinguished $\Gamma$-unitary dilation respectively. Note that such a study, even if succeeds, does not guarantee the success or failure of rational dilation on a distinguished variety in $\overline{\D}^2$ or $\overline{\mathbb G}_2$. However, before getting into complete spectral set versus normal $bX$-dilation in our setting it is necessary to determine the distinguished boundary of a distinguished variety in $\D^2$ and $\G$.

\smallskip 

 There are several techniques in the literature (e.g. see \cite{Oka-Weil}) to determine distinguished boundaries of bounded domains in $\C^n$. However, these techniques vary from domain to domain and till date there is no fixed algorithm to find the distinguished boundary of any compact set in $\C^n$. Unfortunately, none of these techniques seem to work for a distinguished variety in $\D^2$ or $\G$. A probable underlying reason is that the compact sets of type $Z(p) \cap \overline{\D}^2$ or $Z(p) \cap \Gamma$ are too thin. Here we shall apply the theory of analytic variety in a domain \cite{Rossi, Stanislaw} for our purpose. Indeed, by an application of this theory we shall show in this section that $b(Z(p)\cap \overline{\D}^2)=Z(p) \cap \T^2$ and $b(Z(p) \cap \Gamma) =Z(p) \cap b\Gamma$, when $p$ is a toral or $\Gamma$-distinguished polynomial respectively. We begin with a brief theory of analytic variety in a domain.
  
 \subsection{Analytic variety and maximum principle} Let $\Omega \subseteq \C^n$ be a domain. A subset $V$ is said to be an \textit{analytic variety} or an \textit{analytic subset} or a \textit{subvariety} of $\Omega$ if for every point $z \in \Omega$, there is a neighborhood $U_z$ of $z$ and functions $f_1, \dotsc, f_k$ holomorphic in $U_z$ such that 
\[
U_z \cap V=\left\{w \in U_z \ : \ f_1(w)=0, \dotsc, f_k(w)=0 \right\}.
\]
Let $M$ be a complex manifold. By a \textit{globally analytic variety} or a \textit{globally analytic subset} of the manifold $M$, we mean any set of the form 
\[
V=\left\{z \in M \ : \ f_1(z)=0, \dotsc, f_k(z)=0 \right\},
\]
where $f_1, \dotsc, f_k$ are holomorphic on $M$. A subset $Z$ of a manifold $M$ is called an \textit{analytic subset} of $M$ if every point of $M$ has an open neighborhood $U$ such that the set $Z \cap U$ is a global analytic subset of $U$. Since every domain $\Omega$ in $\C^n$ is a complex manifold of dimension $n$, one can define an analytic subset of $\Omega$. It is easy to see that, in this case, the concept of an analytic subset of $\Omega$ coincides with that of a subvariety of $\Omega$. An interested reader is referred to Chapter II, Section 3 in \cite{Stanislaw} and  Chapter II, Section E in \cite{Rossi} for a detailed study. We mention a few important definitions and results which will be used in sequel.

\begin{lem}[\cite{Stanislaw}, Chapter IV, Section 1]\label{component} 
	Connected components of an analytic subset of a complex manifold $M$ are analytic subsets of $M$. 
\end{lem}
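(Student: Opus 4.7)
The plan is to localize the problem: it suffices to verify, for each point $p \in M$, that the connected component $C$ of $A$ can be cut out by finitely many holomorphic functions on some open neighborhood of $p$. The key preliminary fact that I would invoke is that any analytic subset of a complex manifold is locally connected in its subspace topology. This in turn rests on the local decomposition theorem for analytic germs: at each point $p \in A$, the germ $A_p$ decomposes into finitely many irreducible analytic germs, and each such irreducible germ admits a fundamental system of connected open neighborhoods. Granting this, every connected component of $A$, in particular $C$, is both open and closed in $A$.

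With local connectedness of $A$ in hand, I would proceed by a straightforward three-case analysis at an arbitrary point $p \in M$. If $p \notin A$, then since $A$ is closed in $M$ one can pick a neighborhood $U$ of $p$ disjoint from $A$, so $C \cap U = \emptyset$, which is analytic (cut out by a nonvanishing constant). If $p \in A \setminus C$, then because $A \setminus C$ is open in $A$, one can shrink $U$ so that $U \cap A \subseteq A \setminus C$; again $C \cap U = \emptyset$. Finally, if $p \in C$, then because $C$ is open in $A$, one can shrink $U$ so that $U \cap A \subseteq C$, whence $C \cap U = U \cap A$, and the right-hand side is analytic in $U$ by the hypothesis that $A$ is an analytic subset of $M$.

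The main obstacle is precisely the preliminary step, namely justifying the local connectedness of $A$. This is not a purely topological statement; it depends on the nontrivial local structure theory for analytic sets, specifically the existence, finiteness, and connectedness of the local irreducible branches at each point of $A$. In the textbook \cite{Stanislaw} this is developed via Weierstrass preparation and a dimension-inductive argument that stratifies $A$ by its regular and singular loci (the regular locus $\mathrm{Reg}(A)$ being open and dense in $A$, and the singular locus $\mathrm{Sing}(A)$ being an analytic subset of strictly lower dimension). One could either quote this directly from Chapter II of \cite{Stanislaw} or reconstruct the argument. Once local connectedness is accepted, the rest of the proof is essentially formal and requires no computation.
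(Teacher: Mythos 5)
The paper states this lemma without proof, citing \cite{Stanislaw}, so there is no in-paper argument to compare against; your proposal is a correct reconstruction of the standard proof. You rightly isolate the one nontrivial ingredient, namely local connectedness of analytic sets (via the finite decomposition into irreducible germs, each admitting a fundamental system of neighborhoods with connected representatives), and the three-case verification is sound --- noting that for $p\in A\setminus C$ one only needs that $C$ is closed in $A$, which is automatic for a connected component, so local connectedness is genuinely used only at points of $C$. The sole cosmetic refinement is that in the case $p\in C$ you should shrink $U$ to lie inside a neighborhood on which $A$ is globally cut out by finitely many holomorphic functions, so that $C\cap U=A\cap U$ is a \emph{global} analytic subset of $U$, as the paper's definition of analytic subset requires.
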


An immediate consequence of Lemma \ref{component} is that if $V$ is a subvariety of a domain $\Omega$, then each connected component of $V$ is again a subvariety of $\Omega $. 

\begin{defn}
	Let $f$ be a real-valued function defined on a domain $\Omega$ in $\C^n$. The function $f$ is said to be \textit{plurisubharmonic} if $f$ is upper semi-continuous and has the following property: if $\phi:\D \to \Omega$ is holomorphic, then $f \circ \phi$ is subharmonic.
\end{defn}

It is well-known that if $f$ is a holomorphic function on a domain $\Omega \subseteq \C^n$, then $|f|$ is a plurisubharmonic function on $\Omega$. We now mention that plurisubharmonic functions obey the following maximum principle. 

\begin{thm}[\cite{Rossi}, Chapter IX, Section C, Proposition 3]\label{thm_max} Let $f$ be a continuous plurisubharmonic function defined in a domain $\Omega$, and let $V$ be a closed connected subvariety of $\Omega$. If $f|_V$ attains its maximum at some point of $V$, then $f|_V$ is constant.  
\end{thm}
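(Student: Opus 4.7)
The plan is to show that $V_M := \{z \in V : f(z) = M\}$, where $M$ denotes the attained maximum of $f|_V$, equals all of $V$. By hypothesis $V_M \neq \emptyset$, and continuity of $f$ makes $V_M$ relatively closed in $V$. Since $V$ is connected, it is enough to prove that $V_M$ is also relatively open in $V$; that is, to establish a local maximum principle: if $f|_V$ has a local maximum at $z_0 \in V$, then $f \equiv f(z_0)$ on some neighborhood of $z_0$ in $V$.

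I would first handle the regular case, where $z_0$ is a smooth point of $V$. Then $V$ is locally a complex submanifold of $\Omega$ of pure dimension $k$ near $z_0$, so there is a biholomorphic parametrization $\phi : \Delta \to U$, with $\Delta \subset \C^k$ a polydisk about $0$, $U$ a neighborhood of $z_0$ in $V$ on which $f \leq f(z_0)$, and $\phi(0) = z_0$. Because $\phi$ is a holomorphic map into $\Omega$ and $f$ is plurisubharmonic on $\Omega$, the composition $f \circ \phi$ is continuous and plurisubharmonic on $\Delta$ with maximum at $0$. Restricting to each complex line $t \mapsto tv$, $v \in \C^k$, produces a subharmonic function of $t$ on a disc attaining its maximum at the centre, which must be constant by the subharmonic maximum principle. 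Letting $v$ range over $\C^k$ gives $f \circ \phi \equiv f(z_0)$ on $\Delta$, whence $f \equiv f(z_0)$ on $U$.

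The main obstacle is the singular case. The singular locus $V_{\mathrm{sing}}$ is a proper analytic subvariety of $V$, so $V_{\mathrm{reg}}$ is open and dense in $V$. To propagate the maximum to a singular $z_0$, I would invoke the local analytic structure: decompose $V$ near $z_0$ into its finitely many irreducible local components $V^{(1)}, \dotsc, V^{(r)}$, and take normalizations $\nu_i : \tilde V^{(i)} \to V^{(i)}$, where each $\tilde V^{(i)}$ is an irreducible normal complex space (so that $(\tilde V^{(i)})_{\mathrm{reg}}$ is connected and dense) and $\nu_i$ is a proper finite holomorphic surjection which restricts to a biholomorphism over $V^{(i)}_{\mathrm{reg}}$. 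The pullback $f \circ \nu_i$ is continuous and plurisubharmonic on $\tilde V^{(i)}$ and attains its maximum on the nonempty fibre $\nu_i^{-1}(z_0)$. Applying the regular-case argument on the dense open complex manifold $(\tilde V^{(i)})_{\mathrm{reg}}$, together with an induction on the dimension of the singular stratum inside $\tilde V^{(i)}$, yields $f \circ \nu_i \equiv f(z_0)$ on all of $\tilde V^{(i)}$. Pushing forward through $\nu_i$, we obtain $f \equiv f(z_0)$ on every $V^{(i)}$, hence on a full neighborhood of $z_0$ in $V$.

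Combining non-emptiness, relative openness, and relative closedness of $V_M$ in the connected set $V$ then yields $V_M = V$, so $f|_V \equiv M$. The regular case is a direct reduction to the subharmonic maximum principle on the disc; the technically delicate and main step is the singular case, where the local-irreducible-decomposition and normalization theory of analytic subvarieties, combined with induction on the dimension of the singular stratum, provides the tools to propagate the maximum across singular points.
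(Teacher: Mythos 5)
The paper does not prove this statement; it is imported from Gunning--Rossi via the citation, so there is no in-paper argument to compare against and your proposal must be judged on its own merits. Your open--closed argument on the connected set $V$ and your treatment of the regular case are correct: $V_M$ is relatively closed by continuity, and at a smooth point a continuous plurisubharmonic function with an interior local maximum is locally constant by restriction to complex lines through the maximum point.

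The gap is in the singular case, and it sits exactly where your sketch is vague. After normalizing a local irreducible component, the pullback $f\circ\nu_i$ attains its maximum $M$ on the fibre $\nu_i^{-1}(z_0)$, but that fibre may consist entirely of singular points of $\tilde V^{(i)}$: normalization only forces $\operatorname{codim}\,\tilde V^{(i)}_{\mathrm{sing}}\ge 2$, it does not remove singularities, so the regular-case argument still does not apply at the maximum point. The proposed ``induction on the dimension of the singular stratum'' does not close this, because applying the manifold argument on the stratum containing the maximum point only yields constancy \emph{along} that stratum, and constancy of a plurisubharmonic function on a positive-codimension subvariety does not propagate to a full neighbourhood; the missing content is precisely how to move the value $M$ off the singular locus into the regular part. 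The standard way to supply this step is the local parametrization theorem: represent the irreducible germ at $z_0$ as a proper finite branched cover $\pi: W \to \Delta \subset \C^k$ of degree $d$ with $\pi^{-1}(0)=\{z_0\}$, and use that the fibrewise trace $h(w)=\sum_{z\in \pi^{-1}(w)} f(z)$ is plurisubharmonic on $\Delta$; since $h\le dM$ and $h(w)\to dM$ as $w\to 0$ by continuity of $f$, the maximum principle on the polydisk gives $h\equiv dM$, and as each summand is $\le M$ this forces $f\equiv M$ on $W$ off the branch locus, hence everywhere by density and continuity. With that lemma (or an equivalent, e.g.\ joining an arbitrary nearby point of $W$ to $z_0$ by a holomorphic disc inside $W$), your open--closed argument goes through; without it the proof is incomplete.
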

The following theorem, a consequence of Cartan's prominent work on analytic sheaves, was originally proved in \cite{Cartan}. Comprehensive proofs are provided in the classical texts \cite{Rossi} and \cite{Hormander} (Theorem 7.4.8). By a holomorphic function on an analytic variety $V$, we mean a function that locally agrees with the restriction of a holomorphic function on an open set containing $V$. 
\begin{thm}[H. Cartan]\label{Cartan} If $V$ is an analytic variety in a domain of holomorphy $\Omega$ and if $f$ is a holomorphic function on $V$, there is a holomorphic function $F$ on $\Omega$ such that $F=f$ on $V$. 
\end{thm}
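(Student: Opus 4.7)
The plan is to prove this classical extension theorem via sheaf cohomology, with Cartan's Theorem B as the key input. First I would identify the domain of holomorphy $\Omega \subseteq \C^n$ with a Stein manifold, which is standard. Next I would attach to the analytic subvariety $V$ its ideal sheaf $\mathcal{I}_V \subseteq \mathcal{O}_\Omega$, whose stalks consist of germs of holomorphic functions vanishing on $V$. By Oka's coherence theorem, $\mathcal{I}_V$ is a coherent analytic sheaf. The quotient sheaf $\mathcal{O}_V := \mathcal{O}_\Omega/\mathcal{I}_V$ is supported on $V$, and its global sections over $\Omega$ are in natural bijection with holomorphic functions on $V$ in the local-extension sense used in the paper.

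From the short exact sequence of coherent analytic sheaves
\[
0 \longrightarrow \mathcal{I}_V \longrightarrow \mathcal{O}_\Omega \longrightarrow \mathcal{O}_V \longrightarrow 0
\]
one obtains the long exact sequence of cohomology
\[
H^0(\Omega, \mathcal{O}_\Omega) \xrightarrow{r} H^0(\Omega, \mathcal{O}_V) \longrightarrow H^1(\Omega, \mathcal{I}_V) \longrightarrow \cdots,
\]
where $r$ is the restriction map $F \mapsto F|_V$. Surjectivity of $r$ is precisely the conclusion we want. The next step is to invoke Cartan's Theorem B: on a Stein manifold $X$, $H^q(X, \mathcal{F}) = 0$ for every coherent analytic sheaf $\mathcal{F}$ and every $q \geq 1$. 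Applying this to $\mathcal{F} = \mathcal{I}_V$ yields $H^1(\Omega, \mathcal{I}_V) = 0$, so the long exact sequence forces $r$ to be surjective. Consequently, the given holomorphic $f$ on $V$ lifts to a global holomorphic $F$ on $\Omega$ with $F|_V = f$.

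The main obstacle, by a wide margin, is Theorem B itself, which is a very deep result. The modern route proves it via $\bar\partial$-cohomology: the Dolbeault isomorphism identifies $H^q(\Omega, \mathcal{O}_\Omega)$ with $H^{0,q}_{\bar\partial}(\Omega)$, and H\"ormander's $L^2$ estimates for $\bar\partial$ on a pseudoconvex domain (the analytic incarnation of the Stein condition) show these groups vanish for $q \geq 1$. The passage from $\mathcal{O}_\Omega$ to an arbitrary coherent sheaf $\mathcal{F}$ proceeds by exhausting $\Omega$ by a nested sequence of holomorphically convex compacta, locally resolving $\mathcal{F}$ by finitely generated free $\mathcal{O}$-modules (again using coherence), and gluing the local solutions by a Mittag-Leffler-type argument. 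Reproducing this machinery in the present paper would be an enormous digression, so in practice one simply cites Theorem 7.4.8 of H\"ormander or the classical Cartan--Serre papers, exactly as the authors do in the excerpt.
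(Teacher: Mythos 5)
The paper gives no proof of this statement: it is quoted as a classical theorem with references to Gunning--Rossi and to H\"ormander (Theorem 7.4.8), and your sketch is exactly the standard argument those sources give, so it is correct and matches the paper's (cited) approach. One small attribution point: the coherence of the full ideal sheaf $\mathcal{I}_V$ of an analytic subset is Cartan's coherence theorem rather than Oka's (Oka's theorem gives coherence of the structure sheaf $\mathcal{O}_{\C^n}$); this does not affect the validity of your argument.
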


Begin armed with the theory of analytic variety in a domain, we now proceed to finding the distinguished boundary of a distinguished variety in the bidisc and the symmetrized bidisc. We begin with a simple lemma.

\begin{lem}\label{lem801} 
		Let $K$ be a polynomially convex set in $\C^n$ and let $p \in \C[z_1, \dotsc, z_n]$ be such that $Z(p) \cap K \ne \emptyset$. Then $Z(p) \cap K$ is a polynomially convex set in $\C^n$. 
	\end{lem}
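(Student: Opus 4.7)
The plan is to unfold the definition of polynomial convexity and argue directly, since the polynomial $p$ itself serves as a built-in test function. Recall that a compact set $K \subseteq \mathbb{C}^n$ is polynomially convex precisely when $K = \widehat{K}$, where
\[
\widehat{K} = \{ z \in \mathbb{C}^n : |q(z)| \le \|q\|_K \text{ for every } q \in \mathbb{C}[z_1,\dots,z_n] \}.
\]
So I would pick an arbitrary $w \in \widehat{Z(p) \cap K}$ and show that $w \in Z(p) \cap K$.

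First I would handle membership in $K$. Since $Z(p) \cap K \subseteq K$, the monotonicity of the polynomial hull gives $\widehat{Z(p) \cap K} \subseteq \widehat{K}$, and the assumed polynomial convexity of $K$ yields $\widehat{K} = K$. Hence $w \in K$.

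Next I would obtain $w \in Z(p)$ by testing the defining inequality of the hull against the polynomial $p$ itself. Since $p$ vanishes identically on $Z(p) \cap K$, we have $\|p\|_{Z(p)\cap K} = 0$, and therefore
\[
|p(w)| \le \|p\|_{Z(p)\cap K} = 0,
\]
which forces $p(w) = 0$, i.e.\ $w \in Z(p)$. Combining with the previous paragraph gives $w \in Z(p)\cap K$, and since $w$ was arbitrary in $\widehat{Z(p) \cap K}$, we conclude $\widehat{Z(p)\cap K} \subseteq Z(p) \cap K$. The reverse containment is automatic, so $Z(p) \cap K$ equals its own polynomial hull and is therefore polynomially convex.

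There is no real obstacle here: the one subtlety worth flagging is simply that $Z(p) \cap K$ must be nonempty (which is given) and compact (inherited from $K$) for the notion of polynomial hull to apply in the usual sense. The essential observation is that the test polynomial $p$ is available ``for free'' inside the family of all polynomials used to define the hull, and this single test already cuts the hull of $Z(p)\cap K$ down to the zero set of $p$.
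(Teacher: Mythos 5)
Your proof is correct and uses the same two key observations as the paper: the polynomial convexity of $K$ handles the points outside $K$, and the polynomial $p$ itself (which vanishes on $Z(p)\cap K$) handles the points of $K$ off $Z(p)$. The paper phrases this contrapositively, producing a separating polynomial for each point not in $Z(p)\cap K$, while you show directly that the hull is contained in the set; these are the same argument in two equivalent formulations.
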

	
		\begin{proof} Let $X=Z(p) \cap K$. To prove $X$ is polynomially convex, it suffices to show that for any $x \notin X$, there exists a polynomial $f \in \C[z_1, \dotsc, z_n]$ such that $\|f\|_{\infty, X}<|f(x)|$. Take any $x \notin X$. We discuss the two cases here depending on if $x$ lies in $K$ or not. Let $x \notin K$. Then there exists $f \in \C[z_1, \dotsc, z_n]$ such that $\|f\|_{\infty, K} <|f(x)|$ since $K$ is polynomially convex. Thus $\|f\|_{\infty, X} \leq \|f\|_{\infty, K} <|f(x)|$. For $x \in K$, we have $x \notin Z(p)$ and so, $|p(x)|>0=\|p\|_{\infty, X}$. The proof is complete. 
	\end{proof}
	\begin{lem}\label{toral_peak}
		Let $q$ be a toral polynomial. Then	every point of $Z(q) \cap \mathbb{T}^2$ is a peak point of $Z(q) \cap \overline{\mathbb{D}}^2$.
	\end{lem}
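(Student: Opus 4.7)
The plan is to exhibit, for each point $(w_1, w_2) \in Z(q) \cap \T^2$, a single polynomial $h \in \C[z_1, z_2]$ that peaks at $(w_1, w_2)$ on the entire closed bidisc $\overline{\D}^2$. Since any polynomial trivially lies in $\mathrm{Rat}(Z(q) \cap \overline{\D}^2)$ and $Z(q) \cap \overline{\D}^2 \subseteq \overline{\D}^2$, such a function will automatically peak at $(w_1, w_2)$ on $Z(q) \cap \overline{\D}^2$, which is exactly what the lemma demands. Thus the whole problem reduces to writing down a good peaking function for $(w_1, w_2)$ on the ambient bidisc.

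My candidate is
\[
h(z_1, z_2) = \tfrac{1}{4}(1 + \overline{w}_1 z_1)(1 + \overline{w}_2 z_2).
\]
Because $|w_i| = 1$, one checks immediately that $h(w_1, w_2) = 1$. For the modulus bound, the key inequality I would verify factor-by-factor is: for each $i \in \{1, 2\}$ and $z_i \in \overline{\D}$,
\[
|1 + \overline{w}_i z_i|^2 = 1 + |z_i|^2 + 2\,\mathrm{Re}(\overline{w}_i z_i) \leq 4,
\]
with equality forcing simultaneously $|z_i| = 1$ and $\mathrm{Re}(\overline{w}_i z_i) = |z_i|$, hence $\overline{w}_i z_i = 1$, i.e. $z_i = w_i$. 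Multiplying the two factors gives $|h(z_1, z_2)| \leq 1$ on $\overline{\D}^2$ with equality only at $(w_1, w_2)$. Restricting to $X = Z(q) \cap \overline{\D}^2$ then yields $h(w_1, w_2) = 1$ and $|h(z)| < 1$ for every $z \in X \setminus \{(w_1, w_2)\}$, so $(w_1, w_2)$ is a peak point of $X$.

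I would emphasize why the product form $(1 + \overline{w}_1 z_1)(1 + \overline{w}_2 z_2)$ is preferable to the more obvious linear combination $\tfrac{1}{2}(\overline{w}_1 z_1 + \overline{w}_2 z_2)$: the latter also takes value $1$ at $(w_1, w_2)$, but additionally attains modulus $1$ along the entire curve $\{(e^{i\theta} w_1, e^{i\theta} w_2) : \theta \in \mathbb R\}$, which could meet $Z(q) \cap \T^2$ at extra points and destroy the peaking property. The product, being a product of two independent one-variable peaking factors on $\overline{\D}$, avoids this issue and singles $(w_1, w_2)$ out uniquely on the whole bidisc. There is no genuine obstacle beyond making this choice correctly; it is worth noting that the toral hypothesis on $q$ is not actually used in the argument itself — it merely guarantees $Z(q) \cap \T^2$ is nonempty, so that the statement has nontrivial content.
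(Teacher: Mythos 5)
Your proof is correct and follows essentially the same route as the paper: both arguments exhibit an explicit peaking function for $(w_1,w_2)$ on all of $\overline{\D}^2$ as a product of two one-variable peaking factors and then restrict to $Z(q)\cap\overline{\D}^2$; the paper uses the rational factors $e^{ix}/(2e^{ix}-z_1)$ while you use the polynomial factors $\tfrac12(1+\overline{w}_iz_i)$, a cosmetic difference. Your closing remark that the toral hypothesis is not actually used in the argument is also accurate and is consistent with the paper's later Example on $p(z_1,z_2)=z_2$.
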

	
	\begin{proof}
		Let $(e^{ix}, e^{iy}) \in Z(q) \cap \mathbb{T}^2$ for $x, y \in \mathbb{R}$. Then the map $g$ given by 
		\[
		g: \overline{\mathbb{D}}^2 \to \mathbb{C}, \quad g(z_1, z_2)=\left(\frac{e^{ix}}{2e^{ix}-z_1}\right)\left(\frac{e^{iy}}{2e^{iy}-z_2}\right)
		\]
		is holomorphic on $\overline{{\mathbb{D}}}^2$ and $g(e^{ix}, e^{iy})=1$. For every $z \in \overline{\mathbb{D}}$, we have that $|2e^{ix}-z| \geq 2-|z| \geq 1$ and so, 
		\[
		|g(z_1, z_2)|=\left|\frac{e^{ix}}{2e^{ix}-z_1}\right|\cdot\left|\frac{e^{iy}}{2e^{iy}-z_2}\right| \leq 1 \quad \text{for} \ z_1, z_2 \in \overline{\mathbb{D}}.
		\]	
		Assume that $|g(z_1, z_2)|=1$ for some $(z_1, z_2) \in \overline{\mathbb{D}}^2$. Then $|2e^{ix}-z_1|=1=|2e^{iy}-z_2|$ which is possible if and only if $(z_1, z_2)=(e^{ix}, e^{iy})$. Hence, $g$ is a peaking function for $(e^{ix}, e^{iy})$. The restriction of $g$ to $Z(q) \cap \overline{\mathbb{D}}^2$ is a map in $Rat(Z(q) \cap \overline{\mathbb{D}}^2)$ that peaks at $(e^{ix}, e^{iy})$ and the proof is complete.		
	\end{proof}
	
The following theorem determines the distinguished boundary of a distinguished variety in $\D^2$ or even more. This is a main result of this Section.
	
		\begin{thm}\label{thm803}
		Let $q \in \C[z_1, z_2]$ be a toral polynomial. Then the following are equivalent: 
		\begin{enumerate}
			\item $(z_1, z_2) \in Z(q) \cap \T^2$;
			\item $(z_1, z_2)$ is a peak point of $Z(q) \cap \DC^2$;
			\item $(z_1, z_2) \in b(Z(q) \cap \DC^2)$.
		\end{enumerate}
		Moreover,	$b(Z(q)\cap \DC^2)=Z(q) \cap \T^2$. 
	\end{thm}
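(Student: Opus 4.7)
The strategy is to establish the cyclic chain $(1) \Rightarrow (2) \Rightarrow (3) \Rightarrow (1)$. The first two links are immediate from what has already been developed, while the third -- the heart of the theorem -- will be obtained by verifying that $Y := Z(q) \cap \T^2$ is itself a boundary for $Rat(X)$, where $X := Z(q) \cap \DC^2$. The ``moreover'' assertion then follows at once: the equivalence of $(1)$ and $(3)$ gives $b(Z(q) \cap \DC^2) = Z(q) \cap \T^2$.

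The implication $(1) \Rightarrow (2)$ is exactly the content of Lemma \ref{toral_peak}. For $(2) \Rightarrow (3)$, the standard function-algebra argument applies: if $f \in Rat(X)$ peaks at $(z_1, z_2)$, then no closed subset of $X$ excluding $(z_1, z_2)$ can be a boundary for $Rat(X)$, so $(z_1,z_2)$ lies in every closed boundary of $X$ and hence in their intersection $b(X)$.

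The main work lies in $(3) \Rightarrow (1)$, which will follow once I prove that $Y$ is a boundary for $Rat(X)$. Fix $f \in Rat(X)$; since $f$ has no poles on $X$, it extends holomorphically to a neighborhood of $X$ in $\C^2$, so $|f|$ is continuous there and plurisubharmonic. Let $x_0 \in X$ realize the maximum of $|f|$ on $X$. If $x_0 \in Y$ there is nothing to do. Otherwise $x_0 \in Z(q) \cap \D^2$, because $q$ being toral gives $Z(q) \cap \partial \DC^2 = Y$. Let $V$ be the connected component of the analytic subvariety $Z(q) \cap \D^2$ of $\D^2$ that contains $x_0$; by Lemma \ref{component}, $V$ is itself a closed analytic subvariety of $\D^2$. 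Theorem \ref{thm_max} then forces the plurisubharmonic function $|f|$ to be constant on $V$, taking the value $|f(x_0)|$. Since $q$ is non-constant, $Z(q)$ is a pure $1$-dimensional algebraic variety, so $V$ is a pure $1$-dimensional analytic subvariety of the bounded domain $\D^2$ and cannot be compact (a compact analytic subvariety of $\C^2$ is a finite set of points). Hence the closure $\overline V$ in $\DC^2$ meets $\partial \DC^2$, and this intersection is contained in $Z(q) \cap \partial \DC^2 = Y$. Continuity of $|f|$ gives $|f| \equiv |f(x_0)|$ on $\overline V$, so the maximum of $|f|$ is attained on $Y$, showing that $Y$ is a boundary for $Rat(X)$ and hence $b(X) \subseteq Y$.

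The principal obstacle is ensuring the non-compactness of the components $V \subseteq Z(q) \cap \D^2$, which is precisely what allows the maximum to be transported from an ``interior'' point $x_0$ to the distinguished boundary $\T^2$. This rests on the purity of dimension of algebraic hypersurfaces in $\C^2$ together with the standard fact that every compact analytic subvariety of $\C^n$ is a finite set of points. With this in hand, the plurisubharmonic maximum principle of Theorem \ref{thm_max} and the toral hypothesis $Z(q) \cap \partial \DC^2 = Z(q) \cap \T^2$ deliver the conclusion.
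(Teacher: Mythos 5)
Your proof is correct and follows the same overall architecture as the paper's: the chain $(1)\Rightarrow(2)\Rightarrow(3)\Rightarrow(1)$, with $(1)\Rightarrow(2)$ from Lemma \ref{toral_peak}, $(2)\Rightarrow(3)$ from the definition of $bX$, and the substance in showing $Z(q)\cap\T^2$ is a closed boundary via the decomposition of $Z(q)\cap\D^2$ into connected components (Lemma \ref{component}) and the plurisubharmonic maximum principle (Theorem \ref{thm_max}). Two points of genuine divergence. First, where the paper invokes Cartan's extension theorem (Theorem \ref{Cartan}) to extend $g|_{V_i}$ to a holomorphic function on all of $\D^2$ before applying Theorem \ref{thm_max}, you instead observe that $f\in Rat(X)$ is already holomorphic on an open set containing $X$; this is a legitimate and slightly more economical route, though you should apply Theorem \ref{thm_max} on a component of that open set intersected with $\D^2$ (rather than on a full neighborhood of $X$ in $\C^2$, where $V$ fails to be closed because its closure picks up points of $\T^2$) --- a one-line fix. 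Second, and to your credit, you actually justify why $\overline{V}$ must meet $\T^2$: since $Z(q)$ is a pure one-dimensional hypersurface, no component $V$ can be compact in $\D^2$, so $\overline{V}$ meets $\partial\DC^2$, which by torality lies in $\T^2$. The paper simply asserts that each component ``exits $\D^2$ through $\T^2$'' without argument, so your version closes a small gap in the published proof.
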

	
	\begin{proof}
	
	We prove $(1) \Rightarrow (2) \Rightarrow (3) \Rightarrow (1)$. We shall use the following notations: $X=Z(q) \cap \DC^2$ and $V=Z(q)\cap \D^2$.
	
	\smallskip
		
		\noindent $(1) \Rightarrow (2)$. Let $(z_1, z_2) \in Z(q) \cap \T^2$. It follows from Lemma \ref{toral_peak} that there exists $f \in Rat(X)$ that peaks at $(z_1, z_2)$. 
		
		\vspace{0.2cm}
		
		\noindent $(2) \Rightarrow (3)$. It follows from the definition of peak point and the distinguished boundary that every peak point of $Z(q) \cap \DC^2$ must belong to $b(Z(q)\cap \DC^2)$.
		
		\vspace{0.2cm}
		
		\noindent $(3) \Rightarrow (1)$.  First we prove that $Z(q) \cap \T^2$ is a closed boundary of $X$, i.e., every $g \in Rat(X)$ attains its maximum modulus over $Z(q) \cap \DC^2$ at some point in $Z(q) \cap \T^2$. Take any $g \in Rat(X)$ and some $x \in X$ such that 
		$
		\|g\|_{\infty,\, X}=|g(x)|.
		$ 
		We show that there is $y \in Z(q) \cap \T^2$ such that $\|g\|_{\infty, X}=|g(y)|$. If $x \in Z(q) \cap \T^2$, then we are done. We assume that $x \notin Z(q) \cap \T^2$. Note that  
		\[
		X=Z(q) \cap (\D^2 \cup \partial \D^2)=V \cup (Z(q) \cap \partial \D^2)=V \cup (Z(q) \cap \T^2),
		\] 
		where the last equality follows from the fact that $q$ is toral. Thus $x \in V$. We can re-write 
		$
		V=\underset{i \in I}{\bigsqcup} \ V_i \,,
		$ 
		where each $V_i$ is a connected component of $V$ such that $V_i \cap V_j =\emptyset$ for all $i \ne j$. Indeed, each $V_i$ is a connected component of $V$ that exits $\D^2$ through its distinguished boundary, the $2$-torus $\mathbb{T}^2$. Therefore, the closure of each $V_i$ in $\mathbb{C}^2$, denoted by $\overline{V_i}$, must intersect $\T^2$. Since $V$ is an analytic subset or subvariety of $\D^2$, it follows from Lemma \ref{component} that each $V_i$ is a subvariety of $\D^2$. Also, it follows that each $V_i$ is a closed connected subvariety of $\D^2$. Since $x \in V$, we must have $x \in V_i$ for some $i \in I$. The function $g|_{V_i}$ is holomorphic on $V_i$ and hence, it follows from Theorem \ref{Cartan} that there is a holomorphic map $h$ on $\D^2$ such that 
		$
		h|_{V_i}=g|_{V_i}.
		$
		The function 
		$
		f: \D^2 \to \mathbb{R}$ defined by $f(z)=|h(z)|
		$
		is a continuous plurisubharmonic function on $\D^2$ such that $f|_{V_i}$ (which is same as $|g||_{V_i}$) attains its maximum at $x \in V_i$. It follows from Theorem \ref{thm_max} that $f|_{V_i}$ is constant and hence, 
		$
		|g(z)|=|g(x)| $ for all $z \in V_i$.	Using continuity argument, we have that $|g(z)|=|g(x)|$ for all $z \in \overline{V_i}$. Since $\overline{V_i} \cap \T^2 \ne \emptyset$, we have that $|g(x)|=|g(y)|$ for some $y \in \overline{V_i} \cap \T^2$. Hence, we have 
		$
		\|g\|_{\infty, X}=|g(x)|=|g(y)|
		$
		for some $y \in Z(q) \cap\T^2$. Since $g \in Rat(X)$ is arbitrary, every function in $Rat(X)$ attains its maximum modulus at some point in $Z(q) \cap \T^2$. Therefore, $Z(q) \cap \T^2$ is a closed boundary for $X$. Since $b(Z(q) \cap \DC^2)$ is the smallest closed boundary of $X$, we have $b(Z(q) \cap \DC^2) \subseteq Z(q) \cap \T^2$ and the proof is complete. 
  \end{proof}
	
The above theorem may fail if the concerned polynomial is not a toral polynomial. Below we show it by an example.
\begin{eg}\label{eg_toral}
The polynomial $p(z_1, z_2)=z_2$ is not toral. It is easy to see that
$
Z(p) \cap \DC^2=\DC \times \{0\}$ and $Z(p) \cap \T^2=\emptyset$. Take any $(e^{i\theta}, 0) \in  \T \times \{0\}$. Following the proof of Lemma \ref{toral_peak}, the rational function given by
\[
f: Z(p) \cap \DC^2 \to \C, \quad f(z_1, z_2)=\frac{e^{i\theta}}{2e^{i\theta}-z_1}
\]
peaks at $(e^{i\theta}, 0)$. Thus $(e^{i\theta}, 0) \in b(Z(p) \cap \DC^2)$ and so, $Z(p) \cap \T^2 \ne b(Z(p) \cap \DC^2)$.
\qed	
\end{eg}
Evidently, the distinguished boundary of the symmetrized bidisc $b\Gamma$ is the symmetrization of the $2$-torus $\T^2$ (see \cite{AglerII16} for details). The points in $b\Gamma$ was characterized in \cite{AglerII16} in several different ways. Also, by Proposition 8.1 in \cite{AglerLykova}, if $x \in b\Gamma$ then there is a function $f$ that is holomorphic on $\G$ and continuous on $\Gamma$ such that $f$ peaks at $x$. For our purpose, we need a peaking function $f$ in $Rat(\Gamma)$ and the next theorem provides such a function. We give a proof to this theorem here, because we could not locate it in the literature. However, our proof is based on the techniques that are used in the proof of Theorem 8.4 in \cite{AglerLykova}.

	\begin{thm}\label{thm::peak_Gamma}
		Let $(s_0, p_0) \in b\Gamma$. Then there exists $f \in Rat(\Gamma)$ such that $f$ peaks at $(s_0, p_0)$.	
	\end{thm}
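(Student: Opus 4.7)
My plan is to construct explicitly a rational function $f \in Rat(\Gamma)$ satisfying $f(s_0, p_0) = 1$ and $|f(s, p)| < 1$ for every $(s, p) \in \Gamma \setminus \{(s_0, p_0)\}$. First I would parametrize the point: by Theorem~\ref{thm:0206}(c), one may write $(s_0, p_0) = (z_1 + z_2, z_1 z_2)$ for some $z_1, z_2 \in \T$, and by part (d) of the same theorem there exists $\omega_0 \in \T$ with $s_0 = \omega_0 + \bar{\omega_0}\, p_0$ (the choice $\omega_0 = z_1$ works).

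Next, the plan is to construct $f$ via the symmetrization map $\pi \colon \DC^2 \to \Gamma$, $\pi(w_1, w_2) = (w_1 + w_2, w_1 w_2)$, which identifies $Rat(\Gamma)$ with the subalgebra of swap-invariant functions in $Rat(\DC^2)$ and sends $\T^2$ onto $b\Gamma$. By Lemma~\ref{toral_peak}, the bidisc peaking function
\[
g_{(z_1, z_2)}(w_1, w_2) = \frac{z_1 z_2}{(2z_1 - w_1)(2z_2 - w_2)}
\]
peaks at $(z_1, z_2) \in \T^2$. In the degenerate case $z_1 = z_2 = z_0$, this function is already swap-invariant, and a direct AM--GM computation on $\DC^2$ shows that the polynomial $f(s, p) = \tfrac{1}{4}\bar{z_0}\, s + \tfrac{1}{2}\bar{z_0}^{\,2}\, p$ lies in $Rat(\Gamma)$ and peaks at $(s_0, p_0) = (2z_0, z_0^2)$. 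For the generic case $z_1 \ne z_2$, I would construct a swap-invariant peaking function on $\DC^2$ whose modulus attains $1$ simultaneously at both points of the two-element fibre $\pi^{-1}(s_0, p_0) = \{(z_1, z_2), (z_2, z_1)\}$; this descends through $\pi$ to the desired $f \in Rat(\Gamma)$. The construction may combine $g_{(z_1, z_2)}$ and $g_{(z_2, z_1)}$ with auxiliary Agler--Young magic functions $\Phi_\alpha(s, p) = (2\alpha p - s)/(2 - \alpha s)$, which for $|\alpha| < 1$ belong to $Rat(\Gamma)$ with $|\Phi_\alpha| \le 1$, following the techniques used in the proof of Theorem~8.4 in \cite{AglerLykova}.

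The main obstacle is the construction of such a swap-invariant peaking function on $\DC^2$ in the generic case $z_1 \ne z_2$. The naive symmetrizations $g_{(z_1, z_2)} + g_{(z_1, z_2)} \circ \sigma$ or $g_{(z_1, z_2)} \cdot g_{(z_1, z_2)} \circ \sigma$ (where $\sigma$ denotes the coordinate swap) typically fail to achieve modulus $1$ at $(z_1, z_2)$, because the cross-terms $g_{(z_1, z_2)}(z_2, z_1)$ have modulus strictly smaller than $1$. The remedy will be a more subtle swap-invariant combination, likely incorporating magic-function factors chosen in terms of $z_1, z_2$ and $\omega_0$, whose modulus saturates $1$ on the full two-element fibre. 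Once such a function is constructed, the verification that its descent $f$ lies in $Rat(\Gamma)$, satisfies $|f| \le 1$ on $\Gamma$, and attains $|f| = 1$ only at $(s_0, p_0)$ reduces to direct computation using $s = w_1 + w_2$, $p = w_1 w_2$ with $|w_j| \le 1$.
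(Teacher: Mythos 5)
Your plan correctly isolates the two cases and your equal-point construction is fine: for $(s_0,p_0)=(2z_0,z_0^2)$ the polynomial $\tfrac{1}{4}\overline{z}_0 s+\tfrac{1}{2}\overline{z}_0^{\,2}p$ does peak there (one checks $|f|=1$ forces $|w_1w_2|=1$ and $|w_1+w_2|=2$, hence $w_1=w_2=z_0$), which is a legitimate alternative to the paper's choice $\tfrac{1}{2}(1+s/s_0)$. However, in the main case $z_1\neq z_2$ your proof has a genuine gap: the entire content of the theorem is the exhibition of the swap-invariant function whose modulus saturates $1$ on the full fibre $\{(z_1,z_2),(z_2,z_1)\}$, and you only assert that such a function "will be" a subtle combination of the bidisc peaking functions and magic functions, without producing it. You correctly diagnose why naive symmetrization fails (the cross-terms have modulus strictly below $1$), but diagnosing the obstacle is not the same as overcoming it, and it is not clear that any algebraic combination of $g_{(z_1,z_2)}$, $g_{(z_2,z_1)}$ and the $\Phi_\alpha$ alone will do the job.

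The missing idea, which is how the paper closes this case, is to first normalize by an automorphism: choose a M\"{o}bius automorphism $v$ of $\D$ with $v(z_1)=1$, $v(z_2)=-1$ (possible precisely because $z_1\neq z_2$); the induced map $T_v(w_1+w_2,w_1w_2)=(v(w_1)+v(w_2),v(w_1)v(w_2))$ is an automorphism of $\G$ extending to $\Gamma$, preserving $b\Gamma$, and carrying $(s_0,p_0)$ to $(0,-1)$. At that single normalized point the explicit polynomial $g(s,p)=\tfrac{1}{2}\bigl(1+\tfrac{s^2-4p}{4}\bigr)$ peaks, because the inequality $\tfrac{1}{2}|s-\overline{s}p|+\tfrac{1}{4}|s^2-4p|+\tfrac{1}{4}|s|^2\leq 1$ of Theorem \ref{thm205}(c) gives $|g|\leq 1$ on $\Gamma$ and forces $s=0$, $p=-1$ at any modulus-one point. (In your bidisc picture, $g\circ\pi=\tfrac{1}{2}\bigl(1+\tfrac{(w_1-w_2)^2}{4}\bigr)$ is exactly the swap-invariant function peaking simultaneously at $(1,-1)$ and $(-1,1)$ that you were seeking.) Then $f=g\circ T_v$ is a rational function with no singularities in $\Gamma$ that peaks at $(s_0,p_0)$. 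Without this normalization-plus-explicit-formula step, or some equally concrete substitute, your argument does not prove the theorem.
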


	\begin{proof}
		Consider $(s_0, p_0)=(z_1+z_2, z_1z_2)$ for some $z_1, z_2 \in \T$. If $z_1=z_2$, then $(s_0, p_0)=(2z_1, z_1^2)$ and the function 
		$
		f(s, p)={1}\slash {2}(1+s\slash s_0)
		$
		peaks at $(s_0, p_0)$. To see this, note that $f(s_0, p_0)=1$ and $|f(s, p)| \leq \frac{1}{2}(1+|s|\slash 2) \leq 1$ for every $(s,p) \in \Gamma$. If $|f(s, p)|=1$ for $(s, p)=(\alpha_1+\alpha_2, \alpha_1\alpha_2) \in \Gamma$, then $|1+s\slash s_0|=2$. Since $s\slash s_0 \in \DC$, we must have $s_0=s$. Then 
		$
		2z_1=\alpha_1+\alpha_2 $ and so, $|\alpha_1|+|\alpha_2|=2$ which gives $|\alpha_1|=|\alpha_2|=1$, as $\alpha_1, \alpha_2 \in \DC$. Again, $\alpha_1+\alpha_2=2z_1$ implies that $|1+\alpha_1\overline{\alpha}_2|=2$, which is possible only when $\alpha_1\overline{\alpha}_2=1$. Thus, $\alpha_1=\alpha_2=z_1$ and hence, $(s, p)=(s_0, p_0)$. Let $z_1 \ne z_2$. Choose an automorphism $v$ of $\D$ such that $v(z_1)=1$ and $v(z_2)=-1$. The map
		\[
		T_v : \G \to \G, \quad T_v(w_1+w_2, w_1w_2)=\left(v(w_1)+v(w_2), v(w_1)v(w_2) \right)
		\] 
		defines an automorphism of $\G$ which extends continuously from $\Gamma$ onto $\Gamma$ in a bijective manner such that $T_v(b\Gamma)=b\Gamma$. Note that $T_v(s_0, p_0)=(0, -1)$. The map  
		\[
		g: \Gamma \to \C, \quad g(s, p)=\frac{1}{2}\left(1+\frac{s^2-4p}{4}\right).
		\]
		is a polynomial with $g(0, -1)=1$. It follows from Theorem 1.1 in \cite{AglerII16} that every $(s,p) \in \Gamma$ satisfies the following inequality:
		\begin{equation}\label{eqn:3.2}
			\frac{|s-\overline{s}p|}{2} +\frac{|s^2-4p|}{4}+\frac{|s|^2}{4} \leq 1.
		\end{equation}
		Using (\ref{eqn:3.2}), we have
		\[
		|g(s, p)| \leq \frac{1}{2}\left(1+\frac{|s^2-4p|}{4}\right) \leq 1 
		\]
		for every $(s, p) \in \Gamma$. Let $|g(s, p)|=1$ for some $(s, p) \in \Gamma$. Then 
		\begin{equation}\label{eqn:3.3}
			2=\left|1+\frac{s^2-4p}{4}\right| \quad \text{and so}, \quad \frac{s^2-4p}{4}=1
		\end{equation}
		which gives $s=0$ by the virtue of (\ref{eqn:3.2}). It follows from (\ref{eqn:3.3}) that $(s, p)=(0, -1)$. Therefore, $g$ peaks at $(0, -1)$ and consequently, $f=g \circ T_v$ peaks at $(s_0, p_0)$. Evidently, $f$ is a rational function that has no singularity in $\Gamma$. The proof is now complete.  
	\end{proof}
	
We now determine the distinguished boundary of a distinguished variety in $\G$, which is an analogue of Theorem \ref{thm803}. Obviously, one can imitate the proof of Theorem \ref{thm803} too here. However, we present an alternative proof leveraging the symmetrization map.
	
	\begin{thm}\label{dist_var}
		For a $\Gamma$-distinguished polynomial $p \in \C[z_1, z_2]$, the following are equivalent: 
		\begin{enumerate}
			\item $(s_0, p_0) \in Z(p) \cap b\Gamma$;
			\item $(s_0, p_0)$ is a peak point of $Z(p) \cap \Gamma$;
			\item $(s_0, p_0) \in b(Z(p) \cap \Gamma)$.
		\end{enumerate}
		Moreover,	$b(Z(p)\cap \Gamma)=Z(p) \cap b\Gamma$. 
	\end{thm}
	
	\begin{proof}  $(1) \Rightarrow (2)$ follows from Theorem \ref{thm::peak_Gamma} and  $ (2)\Rightarrow (3)$ follows from the definition of peak points. We show that $b(Z(p) \cap \Gamma) \subseteq Z(p) \cap b\Gamma$ proving $(3) \Rightarrow (1)$. Indeed,  we prove that $Z(p) \cap b\Gamma$ is a closed boundary of $X$. Let us define 
		$
		q(z_1, z_2)=p(z_1+z_2, z_1z_2)=p\circ \pi(z_1, z_2)$.	Following Example \ref{3.3} we conclude that $q$ is a toral polynomial. Take $g \in Rat(Z(p) \cap \Gamma)$ and define $f:=g \circ \pi$. Evidently, $f \in Rat(Z(q) \cap \DC^2)$. It follows from Theorem \ref{thm803} that there is $x \in Z(q) \cap \T^2$ such that 
		\begin{equation}\label{eqn803}
			|f(x)|=\|f\|_{\infty, Z(q) \cap \DC^2}.
		\end{equation}
		It is not difficult to see that $\pi(Z(q) \cap \DC^2)=Z(p) \cap \Gamma$. Thus
		\begin{equation*}
			\begin{split}
				|g\circ \pi(x)|
				=|f(x)|
				&=\sup\{|g\circ \pi(z_1, z_2)| \ : \ (z_1, z_2) \in Z(q) \cap \DC^2 \} \qquad [\text{By (\ref{eqn803})}]\\
				& =\sup\left\{|g\circ \pi(z_1, z_2)| \ : \ \pi(z_1, z_2) \in Z(p) \cap \Gamma \right\} \\
				& =\sup\left\{|g(\widetilde{s}, \widetilde{p})| \ : \ (\widetilde{s}, \widetilde{p}) \in Z(p) \cap \Gamma \right\} \\
				&=\|g\|_{\infty, Z(p) \cap \Gamma}.
			\end{split}
		\end{equation*}
		Note that $\pi(x) \in Z(p) \cap b\Gamma$, as $ x \in Z(q) \cap \T^2$. Therefore, $g$ attains its maximum modulus at a point in $Z(p) \cap b\Gamma$. Since $g \in Rat(Z(p) \cap \Gamma)$ is arbitrary, we have that $Z(p) \cap b\Gamma$ is a closed boundary for $Z(p) \cap \Gamma$. Consequently, $b(Z(p)\cap \Gamma) \subseteq Z(p) \cap b\Gamma$ and the proof is complete. 
	\end{proof}
	
	We cannot remove the hypothesis that $q$ is $\Gamma$-distinguished in the above theorem as the following example explains.

\begin{eg}
	The polynomial $q(z_1, z_2)=z_2$ is not $\Gamma$-distinguished. Let $(s_0, 0) \in Z(q) \cap \Gamma$. It was proved in \cite{AglerII16} that if $(s,p) \in \Gamma$, then $|s-\overline{s}p| \leq 1-|p|^2$. Thus, $(s_0,0) \in \Gamma$ implies that $|s_0| \leq 1$ and so, $Z(q) \cap \Gamma \subseteq \DC \times \{0\}$. For any $\lambda \in \DC \times \{0\}$, we have $(\lambda, 0)=\pi(\lambda, 0) \in \Gamma$. Therefore, $Z(q) \cap \Gamma=\DC \times \{0\}$.  Moreover, $Z(q) \cap b\Gamma=\emptyset$ as $|p_0|=1$ for $(s_0, p_0) \in b\Gamma$. The same computations as in Example \ref{eg_toral} give $Z(q) \cap b\Gamma \ne b(Z(q) \cap \Gamma)$.  \qed 
\end{eg}

\section{Dilation of toral contractions}\label{sec07}	
	
\noindent In this Section, we give a proof to Theorem \ref{main_toral} which provides necessary and sufficient conditions in order to answer the question raised in \cite{DasII}, if a toral pair of contractions dilates to a toral pair of isometries. It was shown in \cite{DasII} that every toral pair of isometries extends to a toral pair of unitaries. Thus, dilating a pair of commuting contractions to a toral pair of isometries is equivalent to dilating it to a toral pair of unitaries. Note that our aim is not to determine the success or failure of rational dilation on a distinguished variety in $\D^2$. Rather, our primary investigation is to find if dilation of a toral pair of contractions $(T_1,T_2)$ to a toral pair of isometries is equivalent to a particular distinguished variety in $\mathbb D^2$ being a complete spectral set for $(T_1,T_2)$. More precisely, we first settle the following question.
	
	\medskip

\noindent \textbf{Question.} Let $(T_1,T_2)$ be a toral pair of contractions that dilates to a toral pair of unitaries $(U_1,U_2)$. Let $p \in \C[z_1, z_2]$ be that toral polynomial for which $p(U_1, U_2)=0$. Is $Z(p) \cap \overline{\D}^2$ a complete spectral set for $(T_1,T_2)$, i.e., $\sigma_T(T_1,T_2) \subseteq Z(p) \cap \overline{\D}^2$ and 
\[
\left\|f(T_1,T_2)\right\| \leq \sup\left\{\left\|f(z_1, z_2)\right\| : (z_1, z_2) \in Z(p) \cap \overline{\D}^2 \right\}
\]
 for every matricial rational function $f=[f_{i,j}]$ with singularities off $Z(p) \cap \overline{\D}^2$ ?
 
 \medskip	
	
In this connection let us mention that a tuple $(S_1, \dotsc, S_n)$ of commuting operators on a Hilbert space $\mathcal{H}$ is said to have a \textit{commuting normal extension} or simply \textit{c.n.e.} if there is a tuple $(N_1, \dotsc, N_n)$ of commuting normal operators on some larger Hilbert space $\mathcal{K} \supseteq \mathcal{H}$ such that $\mathcal{H}$ is invariant under $N_1, \dotsc, N_n$ and $N_i|_\mathcal{H}=S_i$ for $i=1, \dotsc, n$. If we take 
	\[
	\mathcal{K}=\overline{span}\left\{N_1^{*m_1}\dotsc N_n^{*m_n}h \ : \ h \in \mathcal{H}, \ m_1, \dotsc, m_n \in \mathbb{N}\cup \{0\} \right\},
	\] 
	the minimal common reducing subspace of $N_1, \dotsc, N_n$ containing $\mathcal{H}$, then $(N_1, \dotsc, N_n)$ is unique up to unitary equivalence and is called the \textit{minimal commuting normal extension} or simply \textit{minimal c.n.e.} of $(S_1, \dotsc, S_n)$.

\begin{lem}[\cite{Lubin}, Corollary 2]\label{Lubin2}
	Let $\underline{S}=(S_1, \dotsc, S_n)$ be a commuting tuple of subnormal operators on a Hilbert space $\mathcal{H}$ and let $\underline{N}=(N_1, \dotsc, N_n)$ be the minimal c.n.e. of $S$. Then  $p(\underline{N})$ is unitarily equivalent to the minimal normal extension of $p(\underline{S})$ for any polynomial $p$ in $\C[z_1, \dots , z_n]$.
\end{lem}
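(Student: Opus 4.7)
The plan is to show that $p(\underline{N})$, viewed on $\mathcal{K}$, is itself the minimal normal extension of the subnormal operator $p(\underline{S})$; the claimed unitary equivalence then follows from the standard uniqueness (up to unitary equivalence) of minimal normal extensions of subnormals.

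The first step is routine. Since $N_1,\dotsc,N_n$ are commuting normal operators, the polynomial $p(\underline{N})$ is itself normal, and because $\HS$ is invariant under each $N_i$ with $N_i|_{\HS}=S_i$, the space $\HS$ is invariant under $p(\underline{N})$ with $p(\underline{N})|_{\HS}=p(\underline{S})$. So $p(\underline{N})$ on $\mathcal{K}$ is a normal extension of $p(\underline{S})$, and the lemma reduces to proving that $\mathcal{K}$ itself is the smallest reducing subspace of $p(\underline{N})$ containing $\HS$.

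For this reduction, I would invoke the joint spectral theorem for $\underline{N}$: let $E$ be the joint spectral measure of $\underline{N}$ on $\sigma_T(\underline{N})\subseteq\C^n$. Minimality of $\underline{N}$ as the c.n.e.\ of $\underline{S}$ is equivalent to $\mathcal{K}=\overline{\mathrm{span}}\{E(\Omega)h : \Omega \subseteq \sigma_T(\underline{N}) \text{ Borel},\ h \in \HS\}$. The spectral measure of the single normal $p(\underline{N})$ is the pushforward $F(A)=E(p^{-1}(A))$ on $\sigma(p(\underline{N}))\subseteq\C$, so the smallest reducing subspace of $p(\underline{N})$ containing $\HS$ is $\mathcal{M}=\overline{\mathrm{span}}\{E(p^{-1}(A))h : A \subseteq \C \text{ Borel},\ h \in \HS\}$.

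The principal obstacle is to prove $\mathcal{M}=\mathcal{K}$, since only sets of the form $p^{-1}(A)$ appear on the right. I would handle this by decomposing $\mathcal{K}$ into cyclic components for $\underline{N}$ with cyclic vectors drawn from $\HS$ — a decomposition made possible by the minimality of the c.n.e. — and arguing component by component. On each such cyclic piece the question becomes a statement about scalar-valued spectral measures and the Borel structure they carry under pullback by $p$; a careful measure-theoretic analysis, of the kind carried out in Lubin's paper, delivers the fiberwise equality, which reassembles to $\mathcal{M}=\mathcal{K}$ and closes the proof.
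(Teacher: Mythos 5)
The paper does not prove this lemma at all: it is imported verbatim from Lubin's paper (Corollary 2 of \cite{Lubin}) and used as a black box, so there is no in-paper argument to compare yours against. Judged on its own terms, your proposal has a genuine gap at precisely the step that constitutes the entire content of the lemma. Your reduction is fine: $p(\underline{N})$ is a normal extension of $p(\underline{S})$, and by uniqueness of minimal normal extensions it suffices to show that the smallest reducing subspace $\mathcal{M}=\overline{\mathrm{span}}\{E(p^{-1}(A))h : A \text{ Borel},\ h\in\HS\}$ of the single operator $p(\underline{N})$ is all of $\mathcal{K}$. But at that point you defer to ``a careful measure-theoretic analysis, of the kind carried out in Lubin's paper''; that deferred analysis \emph{is} the lemma, so nothing has been proved.

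Moreover, the route you sketch --- cyclic decomposition plus a ``fiberwise equality'' of spectral measures under pullback by $p$ --- cannot succeed as described, because the equality $\mathcal{M}=\mathcal{K}$ is not a statement about spectral measures alone: it fails for general subspaces $\HS$ satisfying $\mathcal{K}=\overline{\mathrm{span}}\{E(\Omega)h : h\in\HS\}$. For instance, take $n=1$, $\mathcal{K}=L^2(\T)$, $N=M_z$, $p(z)=z^2$, and let $\HS$ be the closed span of $\{z^{2k}: k\in\Z\}$; then $\overline{\mathrm{span}}\{E(\Omega)h\}=L^2(\T)$ (take $h=1$), yet every set $p^{-1}(A)$ is symmetric under $z\mapsto -z$, so $E(p^{-1}(A))h$ is even whenever $h$ is, and $\mathcal{M}$ lands inside the proper subspace of even functions. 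What rescues the actual lemma is the hypothesis your key step never invokes: $\HS$ is invariant under each $N_i$ with $N_i|_{\HS}=S_i$, so $\mathcal{M}$ contains not just $E(p^{-1}(A))h$ but $E(p^{-1}(A))\,q(\underline{N})h$ for every polynomial $q$, and one must then show that the $*$-algebra generated by $\overline{p}$ together with the analytic polynomials in $z_1,\dotsc,z_n$ is large enough (relative to the joint spectral measure) to recover all of $\mathcal{K}$. That density argument, which is what Lubin actually supplies, is absent from your sketch, and without it the proof does not close.
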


 We start our campaign with Proposition 4.4 in \cite{DasII} which tells us that every toral pair of isometries extends to a toral pair of unitaries. We give an alternative proof to this result here.
	
	\begin{prop}[\cite{DasII}, Proposition 4.4]\label{toral_extend}
		Every toral pair of isometries extends to a toral pair of unitaries.
	\end{prop}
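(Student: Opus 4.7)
The plan is to combine Lubin's result (stated as Lemma \ref{Lubin2}) with the classical It\^o extension theorem for commuting isometries. Suppose $(V_1, V_2)$ is a toral pair of isometries on $\HS$, so there is a toral polynomial $p \in \C[z_1, z_2]$ with $p(V_1, V_2) = 0$. Since every isometry is subnormal, the commuting pair $(V_1, V_2)$ is a commuting pair of subnormal operators and therefore admits a minimal commuting normal extension $(U_1, U_2)$ on some Hilbert space $\mathcal{K} \supseteq \HS$.

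Next I would invoke It\^o's theorem, which guarantees that every commuting pair of isometries admits a commuting pair of unitary extensions. Restricting these unitary extensions to the minimal common reducing subspace containing $\HS$, we may assume that the minimal c.n.e. $(U_1, U_2)$ itself consists of commuting unitaries. Now apply Lemma \ref{Lubin2} with the polynomial $p$: the operator $p(U_1, U_2)$ is unitarily equivalent to the minimal normal extension of $p(V_1, V_2) = 0$, which is the zero operator. Hence $p(U_1, U_2) = 0$, so $(U_1, U_2)$ is a commuting pair of unitaries (in particular, of contractions) annihilated by the toral polynomial $p$, and is therefore by definition a toral pair of unitaries extending $(V_1, V_2)$.

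The main point requiring care is to identify the minimal commuting normal extension of $(V_1, V_2)$ with a pair of commuting unitaries rather than merely normal operators. For a single isometry this is standard, since the minimal normal extension of an isometry is its Wold unitary. In the bivariate setting, It\^o's theorem supplies a commuting unitary extension, and a routine minimality argument on the common reducing subspace containing $\HS$ allows one to replace it by the minimal c.n.e. Once this identification is secured, the conclusion is a direct application of Lubin's lemma: the zero polynomial identity on $\HS$ propagates to the unitary extension, producing the desired toral pair of unitaries.
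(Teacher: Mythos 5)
Your proposal is correct and follows essentially the same route as the paper: both obtain a commuting unitary extension of $(V_1,V_2)$ (the paper via And\^{o}'s theorem, you via It\^{o}'s theorem, which is the same extension result), restrict to the minimal common reducing subspace containing $\HS$ to realize the minimal c.n.e.\ as a pair of commuting unitaries, and then apply Lubin's lemma (Lemma \ref{Lubin2}) to transfer the identity $q(V_1,V_2)=0$ to the unitary extension.
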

	
	\begin{proof}
		Let $(V_1, V_2)$ be a pair of commuting isometries on a Hilbert space $\HS$ and let $q$ be a toral polynomial such that $q(V_1, V_2)=0$. By Ando's theorem, there exists a pair of commuting unitaries $(\widetilde{U}_1, \widetilde{U}_2)$ acting on a Hilbert space $\widetilde{\mathcal{K}} \supseteq \HS$ such that $(V_1, V_2)=(\widetilde{U}_1|_\HS, \widetilde{U}_2|_\HS)$. Let 
		\[
		\mathcal{K}=\overline{\text{span}}\{\widetilde{U}_1^{*m}\widetilde{U}_2^{*n}h \ : \ h \in \HS \ \& \ m, n \in \N \cup \{0\} \}.
		\]
		Then, $\mathcal{K}$ is a closed subspace of $\widetilde{\mathcal{K}}$ that reduces $\widetilde{U}_1, \widetilde{U}_2$ and contains $\HS$. Define 
		$
		(U_1, U_2):=(\widetilde{U}_1|_\mathcal{K}, \widetilde{U}_2|_\mathcal{K}).
		$
This is a commuting pair of unitaries. Moreover, $(U_1, U_2)$ is the minimal c.n.e. of $(V_1, V_2)$. It follows from Lemma \ref{Lubin2} that $q(U_1, U_2)$ is unitarily equivalent to the minimal normal extension of $q(V_1, V_2)$. Thus $q(U_1, U_2)=0$ and the desired conclusion follows. 
	\end{proof}
	As discussed earlier, for a commuting pair of Hilbert space contractions $(T_1, T_2)$ annihilated by a toral polynomial $q$, spectral mapping theorem gives that $\sigma_T(T_1, T_2) \subseteq Z(q) \cap \DC^2$. A natural question arises here if $Z(q) \cap \DC^2$ is a spectral set for $(T_1, T_2)$. We show by an example below that it is too much to expect for general contractions. However, we have an affirmative answer when $T_1, T_2$ are commuting isometries. To begin with, we have the following result for commuting unitaries.  

	\begin{prop}[\cite{DasII}, Proposition 4.5]\label{prop902}
		A pair of commuting unitaries $(U_1, U_2)$ is toral if and only if its joint spectrum is contained in a distinguished variety in $\D^2$. 
	\end{prop}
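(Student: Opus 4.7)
The plan is to prove both directions using the spectral theory of commuting normal operators, leveraging the fact that for a pair of commuting unitaries the joint spectrum sits in $\T^2$. I will interpret ``contained in a distinguished variety $V \subseteq \D^2$'' in the only way it can make sense for unitaries, namely as containment in the closure $\overline{V} \subseteq \overline{\D}^2$; since $q$ is toral, $\overline{V} \cap \partial \overline{\D}^2 = Z(q) \cap \T^2$, so the condition amounts to $\sigma_T(U_1,U_2) \subseteq Z(q) \cap \T^2$ for some toral polynomial $q$.

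For the forward implication, suppose $(U_1,U_2)$ is toral and let $q \in \C[z_1,z_2]$ be a toral polynomial with $q(U_1,U_2) = 0$. Because $U_1, U_2$ are commuting normal operators, the spectral mapping theorem for the Taylor joint spectrum gives
\[
\{0\} = \sigma\bigl(q(U_1,U_2)\bigr) = q\bigl(\sigma_T(U_1,U_2)\bigr),
\]
so that $\sigma_T(U_1,U_2) \subseteq Z(q)$. Combining this with $\sigma_T(U_1,U_2) \subseteq \T^2$ (which holds because each $U_i$ is unitary) yields $\sigma_T(U_1,U_2) \subseteq Z(q) \cap \T^2$, and since $q$ is toral this latter set is exactly the part of $\overline{V}$ lying on the distinguished boundary of $\overline{\D}^2$.

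For the converse, assume there exists a toral polynomial $q$ such that $\sigma_T(U_1,U_2) \subseteq Z(q) \cap \T^2$. By the spectral theorem for the commuting normal pair $(U_1,U_2)$, there is a joint spectral measure $E$ on $\sigma_T(U_1,U_2)$ such that for every polynomial $f \in \C[z_1,z_2]$,
\[
f(U_1,U_2) = \int f(\lambda_1,\lambda_2)\, dE(\lambda_1,\lambda_2).
\]
Applying this with $f = q$ and using that $q$ vanishes identically on $\mathrm{supp}(E) \subseteq Z(q)$, we conclude $q(U_1,U_2) = 0$. Since $q$ is toral by hypothesis, $(U_1,U_2)$ is a toral pair.

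There is no real obstacle here; the argument is essentially a bookkeeping of the normal functional calculus. The only point requiring care is the reading of the statement, since a distinguished variety $V = Z(q) \cap \D^2$ lies inside $\D^2$ while the spectrum of a unitary pair lies on $\T^2$; the natural and only consistent interpretation is closure-containment, and with that interpretation both directions reduce to one line of spectral calculus. The statement then dovetails cleanly with Proposition~\ref{toral_extend} and with the subsequent goal of relating toral pairs to complete spectral sets on $Z(q) \cap \overline{\D}^2$.
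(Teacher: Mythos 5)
Your proof is correct. The paper does not reprove this proposition (it is quoted from \cite{DasII}), but your argument --- the spectral mapping theorem for the forward direction and the normal functional calculus (equivalently $\|q(U_1,U_2)\|=\sup\{|q(\lambda)| : \lambda \in \sigma_T(U_1,U_2)\}=0$) for the converse --- is exactly the argument the paper gives for the symmetrized-bidisc analogue in Proposition \ref{G_distt._G_unitary}, and your closure reading of ``contained in a distinguished variety'' agrees with how the proposition is actually invoked later (namely as $\sigma_T(U_1,U_2)\subseteq Z(q)\cap\T^2$).
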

	
		By Proposition \ref{prop902}, a pair of commuting unitaries $(U_1, U_2)$ is annihilated by $q \in \C[z_1, z_2]$ if and only if $Z(q) \cap \T^2$ is a spectral set for $(U_1, U_2)$. The underlying reason is that $\sigma_T(\underline{N})$ is a spectral set for a commuting tuple $\underline{N}$ of normal operators.
	
		\begin{prop}
		Let $(V_1, V_2)$ be a  toral pair of isometries and let $q \in \C[z_1, z_2]$ be a toral polynomial. Then $q(V_1, V_2)=0$ if and only if $Z(q) \cap \DC^2$ is a spectral set for $(V_1, V_2)$.
	\end{prop}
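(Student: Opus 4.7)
\emph{Reverse direction.} If $Z(q) \cap \DC^2$ is a spectral set for $(V_1, V_2)$, then since $q$ is a polynomial it lies in $Rat(Z(q) \cap \DC^2)$ with $\|q\|_{\infty, Z(q) \cap \DC^2} = 0$, so the defining inequality of a spectral set forces $q(V_1, V_2) = 0$. This direction is immediate.

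\emph{Forward direction.} Assume $q(V_1, V_2) = 0$. The plan is to manufacture a commuting normal extension of $(V_1, V_2)$ whose Taylor joint spectrum lies in $Z(q) \cap \T^2$, and then to push the von Neumann inequality from this extension down to $(V_1, V_2)$ by exploiting the invariant-subspace structure. By Proposition \ref{toral_extend}, there exist commuting unitaries $(U_1, U_2)$ on some $\mathcal{K} \supseteq \HS$ such that $\HS$ is invariant under $U_1, U_2$, $V_i = U_i|_\HS$, and $q(U_1, U_2) = 0$. By Proposition \ref{prop902}, $\sigma_T(U_1, U_2) \subseteq Z(q) \cap \T^2 \subseteq Z(q) \cap \DC^2$, and spectral mapping applied to $q(V_1, V_2) = 0$ together with $\sigma_T(V_1, V_2) \subseteq \DC^2$ gives $\sigma_T(V_1, V_2) \subseteq Z(q) \cap \DC^2$. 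Now fix any $f = r/s \in Rat(Z(q) \cap \DC^2)$. Since $s$ has no zero on $Z(q) \cap \DC^2$, it has no zero on $\sigma_T(V_1, V_2)$ or $\sigma_T(U_1, U_2)$, so $s(V_1, V_2)$ and $s(U_1, U_2)$ are both invertible. The invariance of $\HS$ under $U_1, U_2$ yields $s(U_1, U_2)|_\HS = s(V_1, V_2)$ and $r(U_1, U_2)|_\HS = r(V_1, V_2)$. For $h \in \HS$, the vector $k := s(V_1, V_2)^{-1} h \in \HS$ satisfies $s(U_1, U_2) k = s(V_1, V_2) k = h$, so $s(U_1, U_2)^{-1} h = k \in \HS$. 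Consequently
\begin{equation*}
f(V_1, V_2) = r(V_1, V_2)\, s(V_1, V_2)^{-1} = f(U_1, U_2)|_\HS .
\end{equation*}
Because $(U_1, U_2)$ is a commuting normal pair with $\sigma_T(U_1, U_2) \subseteq Z(q) \cap \DC^2$, normal functional calculus yields $\|f(U_1, U_2)\| \leq \|f\|_{\infty, Z(q) \cap \DC^2}$, and restriction to $\HS$ gives the same bound for $\|f(V_1, V_2)\|$. Hence $Z(q) \cap \DC^2$ is a spectral set for $(V_1, V_2)$.

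\emph{Main obstacle.} The subtle point is verifying that $s(U_1, U_2)^{-1}$ preserves $\HS$, since invariance of a subspace under a normal operator does not automatically pass to the inverse. The maneuver above sidesteps this by manufacturing the preimage explicitly via $s(V_1, V_2)^{-1}$, whose existence is guaranteed by $s$ being non-vanishing on $\sigma_T(V_1, V_2)$. This trick keeps the proof elementary and avoids both Oka--Weil type rational approximation on the polynomially convex set $Z(q) \cap \DC^2$ (Lemma \ref{lem801}) and the part of Arveson's theorem (Theorem \ref{thm_Arveson}) that would require the spectral-set property as a hypothesis rather than as a conclusion.
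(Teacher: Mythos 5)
Your proof is correct and follows essentially the same route as the paper: extend $(V_1,V_2)$ to a toral pair of commuting unitaries via Proposition \ref{toral_extend}, use Proposition \ref{prop902} to see that $Z(q)\cap\T^2$ is a spectral set for the unitaries, and restrict the resulting von Neumann inequality back to $\HS$. The only difference is that you explicitly justify the identity $f(U_1,U_2)|_\HS=f(V_1,V_2)$ for rational $f=r/s$ by showing $s(U_1,U_2)^{-1}$ preserves $\HS$, a step the paper's proof uses without comment.
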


	\begin{proof}
		It is evident that $q(V_1, V_2)=0$  if $Z(q) \cap \DC^2$ is a spectral set for $(V_1, V_2)$. Conversely, let $q(V_1, V_2)=0$. By spectral mapping theorem,  $\sigma_T(V_1, V_2) \subseteq Z(q) \cap \DC^2$. Following the proof of Proposition \ref{toral_extend}, we have that there is a pair of commuting unitaries $(U_1, U_2)$ acting on a Hilbert space $\mathcal{K} \supseteq \HS$ such that $(V_1, V_2)=(U_1|_\HS, U_2|_\HS)$ and $q(U_1, U_2)=0$. By Proposition \ref{prop902}, $Z(q) \cap \T^2$ is a spectral set for $(U_1, U_2)$. For any $ g \in Rat(Z(p) \cap \DC^2)$, we have 
		\[
		\|g(V_1, V_2)\|=\|g(U_1, U_2)|_\HS \| \leq \|g(U_1, U_2)\| \leq \|g\|_{\infty, \, Z(q) \cap \T^2} \leq \|g\|_{\infty, \, Z(q) \cap \DC^2}. 
		\]
		Therefore, $Z(p) \cap \DC^2$ is a spectral set for $(V_1, V_2)$. This finishes the proof.
	\end{proof}
	
		 Next, we show that for a pair $(T_1, T_2)$ of commuting contractions annihilated by a toral polynomial $q(z_1, z_2)$, the set $Z(q) \cap \DC^2$ need not be a spectral set for $(T_1, T_2)$. 
		
	\begin{eg}
		Let 
		\[
		T_1=\begin{pmatrix}
			0 & 0 & 0 \\
			1 & 0 & 0 \\
			0 & 1 & 0
		\end{pmatrix} \quad \text{and} \quad 
		T_2=\begin{pmatrix}
			0 & 0 & 0 \\
			0 & 0 & 0 \\
			-1 & 0 & 0
		\end{pmatrix}.
		\]
		It is not difficult to verify that $(T_1, T_2)$ is a commuting pair of contractions acting on $\C^3$. Moreover, $p(z_1, z_2)=(z_1-z_2)^3$ is a toral polynomial such that 
		\[
		p(T_1, T_2)=(T_1-T_2)^3=\begin{pmatrix}
			0 & 0 & 0 \\
			1 & 0 & 0 \\
			1 & 1 & 0
		\end{pmatrix}^3=\begin{pmatrix}
			0 & 0 & 0 \\
			0 & 0 & 0 \\
			0 & 0 & 0
		\end{pmatrix}.
		\]
		Let $g(z_1, z_2)=z_1-z_2$. Since $T_1-T_2 \ne 0$ and $Z(p)=Z(g)$, we have  
		$
		\|g(T_1, T_2)\| >0 =\|g\|_{\infty, Z(p) \cap \DC^2}$. Therefore, $Z(p) \cap \DC^2$ is not a spectral set for $(T_1, T_2)$ though $(T_1, T_2)$ is a toral pair of commuting contractions annihilated by $p(z_1, z_2)$.  
		\qed 
	\end{eg}
	
	\noindent The \textit{Neil parabola} 
	$
	W=\left\{(z_1, z_2) \in \D^2 : z_1^2=z_2^3\right\}
	$
	is an example of a distinguished variety in $\D^2$. Dritschel, Jury and McCullough proved (see Theorem 1.1 and Corollary 3.2 in \cite{Dritschel}) that rational dilation fails on the Neil parabola by assuring the existence of a contractive representation of $Rat(W)$ which is not completely contractive. Capitalizing their ideas, we produce our next example to show that there is a commuting pair of contractions $(X, Y)$ annihilated by an irreducible toral polynomial $q$ but $Z(q) \cap \DC^2$ is not a spectral set for $(X, Y)$.
	
		\begin{eg}
		Let $A(\D)$ be the algebra of functions that are continuous on $\DC$ and holomorphic in $\D$. Let $\mathcal{A}_0$ be the subalgebra of $A(\D)$ generated by the polynomials $z^2$ and $z^3$. Then 
		every element in $\mathcal{A}_0$ is of the form 
		\[
		g(z)=g_1(z)z^2+g_2(z)z^3
		\]
		for some $g_1, g_2 \in A(\D)$. Let $\mathcal{A}$ be the closure of $\mathcal{A}_0$ in $A(\D)$. A unital representation $\Phi : \mathcal{A} \to \mathcal{B}(\mathcal{H})$ is said to be \textit{contractive} if 
		$
		\|\Phi(f)\| \leq \|f\|_{\infty, \DC} $ for all $f \in \mathcal{A}$, where $\|f\|_{\infty, \DC}=\sup\{|f(z)| : z \in \DC\}$. It follows from Corollary 3.2 in \cite{Dritschel} that there exist commuting contractions $X$ and $Y$ acting on $\C^k$ (for some $k \in \N)$ with $X^3=Y^2$ such that the representation of $\mathcal{A}$ given by 
		\[
		\Phi: \mathcal{A} \to \mathcal{B}(\C^k), \quad \Phi(z^2)=X \quad \text{and} \quad \Phi(z^3)=Y
		\]
		is bounded but not contractive. Then there is some $f \in \mathcal{A}$ such that $\|\Phi(f)\|>\|f\|$. Since $\mathcal{A}=\overline{\mathcal{A}_0}$, there is a sequence $\{f_n\} \subset \mathcal{A}_0$ such that $\|f_n-f\|_{\infty, \DC} \to 0$ as $n \to \infty$. Using continuity of $\Phi$, it follows that there exists some $N \in \N$ such that $\|\Phi(f_N)\| > \|f_N\|_{\infty, \DC}$. By definition of $\mathcal{A}_0$, there exist $f_{N,1},\,f_{N, 2}$ in $A(\D)$ such that  
		$
		f_N(z)=f_{N,1}(z)z^2+f_{N, 2}(z)z^3.
		$ 
		Since polynomials are dense in $A(\D)$, one can obtain sequences $\{\alpha_{n}\}$ and $\{\beta_n\}$ of polynomials such that 
		\[
		\lim_{n \to \infty}\|f_{N, 1}-\alpha_n\|_{\infty, \DC}=0=	\lim_{n \to \infty}\|f_{N, 2}-\beta_n\|_{\infty, \DC}.
		\] 
		Consequently, we have that the sequence of polynomials $\{\gamma_n\}$ given by $\gamma_n(z)=\alpha_n(z)z^2+\beta_n(z)z^3$ converges to $f_N$ uniformly over $\DC$. Again by continuity of $\Phi$, it follows that there is some $N_0$ such that $\|\Phi(\gamma_{N_0})\|> \|\gamma_{N_0}\|$. Let us define $	p(z):=\gamma_{N_0}(z)$. Then there exist polynomials $p_1(z), p_2(z)$ such that
		$
		p(z)=p_1(z)z^2+p_2(z)z^3.
		$
		So, combining all facts together we have that 
		\begin{equation}\label{eg01}
			\|\Phi(p)\| >\|p\|_{\infty, \DC}.
		\end{equation}
		It follows from the definition of $p(z)$ that we can rewrite
		$
		p(z)=\widetilde{p}(z^2, z^3)
		$
		for some polynomial $\widetilde{p}$ (not necessarily unique) in two variables. Then it follows from the definition of the representation $\Phi$ that
		\begin{equation}\label{eg02}
			\Phi(p)=\widetilde{p}(X, Y).
		\end{equation}
		We show that 
		\begin{equation}\label{eg03}
			\|p\|_{\infty, \DC}=\|\widetilde{p}\|_{\infty, Z(q) \cap \DC^2},
		\end{equation}
		where $q(z_1, z_2)=z_1^3-z_2^2$. Note that there is some $\alpha \in \DC$ such that $\|p\|_{\infty, \DC}=|p(\alpha)|$ and thus,
		\[
		\|p\|_{\infty, \DC}=|p(\alpha)|=|\widetilde{p}(\alpha^2, \alpha^3)|\leq \sup\{|\widetilde{p}(z_1, z_2)| : z_1^3=z_2^2 \ \&  \ (z_1, z_2) \in \DC^2 \}= \|\widetilde{p}\|_{\infty, Z(q) \cap \DC^2}.
		\]
		Let $(\alpha, \beta) \in Z(q) \cap \DC^2$ such that $\|\widetilde{p}\|_{\infty, Z(q) \cap \DC^2}=|\widetilde{p}(\alpha, \beta)|$. To prove the reverse of inequality in (\ref{eg03}), we need to show that there is some $t_0 \in \DC$ such that $|\widetilde{p}(\alpha, \beta)|=|p(t_0)|$. Let $\alpha=re^{i\theta}$ for some $r \in [0,1]$ and $-\pi < \theta \leq \pi$. Define $t=\sqrt{r}e^{i\theta \slash 2}$ which is in $\DC$ and $t^2=\alpha$. For this choice of $t$, we have $\beta^2=\alpha^3=r^3e^{3i\theta}$. Thus either $\beta=r\sqrt{r}e^{3i\theta \slash 2}=t^3$ or $\beta=-r\sqrt{r}e^{3i\theta \slash 2}=-t^3$ and consequently, either $(\alpha, \beta)=(t^2, t^3)$ or $(\alpha, \beta)=((-t)^2, (-t)^3)$. In either case, we have that there is some $t_0 \in \DC$ such that $(\alpha, \beta)=(t_0^2, t_0^3)$. Consequently, 
		\[
		\|\widetilde{p}\|_{\infty, Z(q) \cap \DC^2}=|\widetilde{p}(\alpha, \beta)|=|\widetilde{p}(t_0^2, t_0^3)|=|p(t_0)| \leq \|p\|_{\infty, \DC}
		\]
		and thus (\ref{eg03}) holds. Therefore, it follows from (\ref{eg01})-(\ref{eg03}) that
		\begin{equation*}
			\begin{split}
				\|\widetilde{p}(X, Y)\|=\|\Phi(p)\|>\|p\|_{\infty, \DC} =\|\widetilde{p}\|_{\infty, Z(q) \cap \DC^2}.
			\end{split}
		\end{equation*} 
		Therefore, $Z(q) \cap \DC^2$ is not a spectral set for the commuting pair $(X, Y)$ of contractions but $q(z_1, z_2)$ is an irreducible toral polynomial such that $q(X, Y)=X^3-Y^2=0$. 
		\qed 
	\end{eg}
	
		Combining all facts together, we have the following result.
	
		\begin{lem}
		Let $(T_1, T_2)$ be a commuting pair of Hilbert space contractions and let $q(z_1, z_2)$ be a toral polynomial. If $Z(q) \cap \DC^2$ is a spectral set for $(T_1, T_2)$, then $q(T_1, T_2)=0$ but the converse does not hold even if $q$ is an irreducible polynomial. 
	\end{lem}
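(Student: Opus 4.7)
\medskip

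My plan is to dispatch the two assertions separately, as the forward implication is essentially a one-line consequence of the definitions while the failure of the converse is witnessed by the example that immediately precedes the lemma.

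For the forward implication, the plan is to exploit the fact that polynomials belong to $Rat(X)$ for any compact set $X \subset \C^2$, since polynomials have no singularities. Assuming $Z(q) \cap \DC^2$ is a spectral set for $(T_1,T_2)$, the defining von Neumann-type inequality \eqref{vNe} applied to the polynomial $q$ itself gives
\[
\|q(T_1,T_2)\| \;\leq\; \|q\|_{\infty,\,Z(q) \cap \DC^2} \;=\; \sup\{|q(z_1,z_2)| : (z_1,z_2) \in Z(q) \cap \DC^2\} \;=\; 0,
\]
because $q$ vanishes identically on its own zero set. Hence $q(T_1,T_2)=0$.

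For the failure of the converse, the plan is to appeal to the explicit counterexample produced in the previous example based on the Neil parabola. The polynomial $q(z_1,z_2)=z_1^3-z_2^2$ is irreducible in $\C[z_1,z_2]$ and is toral: if $(z_1,z_2) \in Z(q) \cap \partial \DC^2$, then $|z_1|^3=|z_2|^2$ together with $|z_1|,|z_2|\leq 1$ and $\max\{|z_1|,|z_2|\}=1$ forces $|z_1|=|z_2|=1$, so $Z(q) \cap \partial \DC^2 = Z(q) \cap \T^2$. The preceding example furnishes commuting contractive matrices $X,Y$ on some $\C^k$ with $X^3=Y^2$ (so $q(X,Y)=0$) and a polynomial $\widetilde p$ in two variables satisfying
\[
\|\widetilde p(X,Y)\| \;>\; \|\widetilde p\|_{\infty,\,Z(q)\cap \DC^2},
\]
which shows that $Z(q) \cap \DC^2$ is not a spectral set for $(X,Y)$ even though $q$ annihilates $(X,Y)$ and is irreducible toral.

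No serious obstacle is anticipated: the forward direction reduces to a literal reading of the definition of spectral set, and the converse direction is already packaged in the previous example. The only minor points that deserve explicit verification are that $q(z_1,z_2)=z_1^3-z_2^2$ is indeed irreducible and toral, both of which are straightforward.
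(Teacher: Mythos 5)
Your proof is correct and follows essentially the same route as the paper: the forward implication is the one-line observation that the von Neumann inequality applied to $q$ itself gives $\|q(T_1,T_2)\|\leq \|q\|_{\infty, Z(q)\cap\DC^2}=0$, and the failure of the converse is exactly the Neil parabola example ($q(z_1,z_2)=z_1^3-z_2^2$, with the pair $(X,Y)$ coming from the non-contractive representation of Dritschel--Jury--McCullough) that the paper places immediately before the lemma. Your explicit checks that $q$ is irreducible and toral are correct and a welcome addition, since the paper leaves them implicit.
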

	
	Now we present an important theorem of this Section.
	
		\begin{thm}\label{Arveson_toral}
		Let $(T_1, T_2)$ be a pair of commuting contractions acting on a Hilbert space $\HS$. Then $(T_1, T_2)$ dilates to a toral pair of unitaries if and only if there is a toral polynomial $q(z_1, z_2)$ such that $Z(q) \cap \DC^2$ is a complete spectral set for $(T_1, T_2)$.
	\end{thm}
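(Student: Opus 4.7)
\medskip

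\noindent\textbf{Proof proposal.} The plan is to deduce the theorem from Arveson's theorem (Theorem \ref{thm_Arveson}) together with the key identification $b(Z(q)\cap\DC^2)=Z(q)\cap\T^2$ established in Theorem \ref{thm803}. The two directions will rely on this identification together with the fact (Lemma \ref{lem801}) that $Z(q)\cap\DC^2$ is polynomially convex whenever $\DC^2$ is.

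For the forward direction, assume that $(T_1,T_2)$ dilates to a toral pair of unitaries $(U_1,U_2)$ on some $\mathcal{K}\supseteq\HS$, and let $q$ be a toral polynomial with $q(U_1,U_2)=0$. By Proposition \ref{prop902}, $\sigma_T(U_1,U_2)\subseteq Z(q)\cap\T^2$, and by Theorem \ref{thm803} this is precisely $b(Z(q)\cap\DC^2)$. Since $q(T_1,T_2)=P_{\HS}q(U_1,U_2)|_{\HS}=0$, spectral mapping gives $\sigma_T(T_1,T_2)\subseteq Z(q)\cap\DC^2$. The pair $(U_1,U_2)$ therefore constitutes a normal $bX$-dilation of $(T_1,T_2)$ for $X=Z(q)\cap\DC^2$, in the polynomial sense. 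To promote this to a rational dilation, I would invoke Proposition \ref{basicprop:01}: since $\DC^2$ is polynomially convex, Lemma \ref{lem801} shows that $X=Z(q)\cap\DC^2$ is polynomially convex as well, and a polynomial calculus agreement with the dilation forces the corresponding rational calculus agreement. Arveson's theorem (Theorem \ref{thm_Arveson}) then yields that $X$ is a complete spectral set for $(T_1,T_2)$.

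For the converse, suppose $X=Z(q)\cap\DC^2$ is a complete spectral set for $(T_1,T_2)$ for some toral polynomial $q$. Arveson's theorem supplies a commuting tuple of normal operators $(N_1,N_2)$ on some $\mathcal{K}\supseteq\HS$ with $\sigma_T(N_1,N_2)\subseteq bX$ and $f(T_1,T_2)=P_{\HS}f(N_1,N_2)|_{\HS}$ for every $f\in\mathrm{Rat}(X)$. By Theorem \ref{thm803}, $bX=Z(q)\cap\T^2\subseteq\T^2$, so $(N_1,N_2)$ is a pair of commuting normal operators whose joint spectrum lies in $\T^2$; this forces $N_1$ and $N_2$ to be unitaries. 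Moreover, by the spectral mapping theorem applied to the normal tuple, $\sigma(q(N_1,N_2))=q(\sigma_T(N_1,N_2))\subseteq\{0\}$, whence $q(N_1,N_2)=0$ by normality. Thus $(N_1,N_2)$ is a toral pair of unitaries dilating $(T_1,T_2)$.

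The step demanding most care is the promotion from polynomial dilation to rational dilation in the forward direction. Ando's theorem only guarantees power dilation, so one has to justify applying Proposition \ref{basicprop:01}; this is where the polynomial convexity of $Z(q)\cap\DC^2$ from Lemma \ref{lem801} is indispensable. Everything else is bookkeeping: identifying the distinguished boundary via Theorem \ref{thm803}, using Proposition \ref{prop902} to pin down the joint spectrum of a toral unitary pair, and applying the spectral theorem to turn spectral containment in $Z(q)$ into the equation $q(N_1,N_2)=0$ for the normal dilation.
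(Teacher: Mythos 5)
Your proposal is correct and follows essentially the same route as the paper: both directions rest on Arveson's theorem (Theorem \ref{thm_Arveson}), the identification $b(Z(q)\cap\DC^2)=Z(q)\cap\T^2$ from Theorem \ref{thm803}, the polynomial convexity of $Z(q)\cap\DC^2$ from Lemma \ref{lem801} combined with Proposition \ref{basicprop:01} to upgrade the polynomial dilation to a rational one, and the spectral characterization of toral unitary pairs. The only cosmetic difference is that in the converse you verify directly that the normal dilation with joint spectrum in $Z(q)\cap\T^2$ is a toral unitary pair, where the paper cites Proposition \ref{prop902}; the underlying argument is the same.
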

	
	\begin{proof}
		Suppose	$(T_1, T_2)$ admits dilation to a toral pair of unitaries $(U_1, U_2)$ on $\mathcal{K} \supseteq \mathcal{H}$. Let $q$ be a toral polynomial such that $q(U_1, U_2)=0$. It follows from Proposition \ref{prop902} and Theorem \ref{thm803} that $Z(q) \cap \T^2=b(Z(p) \cap \DC^2)$ is a spectral set for $(U_1, U_2)$. Also, for every $f \in Rat(\DC^2)$, we have 
		\begin{equation}\label{eqn9.04}
			f(T_1, T_2)=P_\mathcal{H}f(U_1, U_2)|_\mathcal{H}.
		\end{equation}
		In particular, $(\ref{eqn9.04})$ holds for every polynomial $f$ in two variables. By Proposition \ref{basicprop:01} and Lemma \ref{lem801}, $(\ref{eqn9.04})$ holds for every function $f$ in $Rat(Z(q) \cap \DC^2)$.  Consequently, there is a commuting pair of normal operators $(U_1, U_2)$ on $\mathcal{K} \supseteq \mathcal{H}$ with $\sigma_T(U_1, U_2)\subseteq b(Z(q)\cap \DC^2)$ such that 
		$
		f(T_1, T_2)=P_\mathcal{H}f(U_1, U_2)|_\mathcal{H}$ for every $f \in Rat(Z(q) \cap \DC^2)$. It follows from Theorem \ref{thm_Arveson} (Arveson's theorem) that $Z(q) \cap \DC^2$ is a complete spectral set for $(T_1, T_2)$.
		
		\smallskip
		
Conversely, assume that $Z(q) \cap \DC^2$ is a complete spectral set for $(T_1, T_2)$ for some toral polynomial $q(z_1, z_2)$. Again by Arveson's theorem, there is a commuting pair of normal operators $(U_1, U_2)$ on $\mathcal{K} \supseteq \mathcal{H}$ such that $\sigma_T(U_1, U_2)\subseteq b(Z(q)\cap \DC^2)$ and $f(T_1, T_2)=P_\mathcal{H}f(U_1, U_2)|_\mathcal{H}$ for every holomorphic polynomial $f$ in two variables. It follows from Theorem \ref{thm803} that $b(Z(q) \cap \DC^2)=Z(q) \cap \T^2$ and so, $\sigma_T(U_1, U_2) \subseteq Z(q) \cap \T^2$. By Proposition \ref{prop902}, $Z(q) \cap \T^2$ is a spectral set for $(U_1, U_2)$ and $q(U_1, U_2)=0$. Thus, $(U_1, U_2)$ is a toral pair of unitaries that dilates $(T_1, T_2)$ and we are done.
\end{proof}
	
		The following theorem is an obvious corollary of Proposition \ref{toral_extend} and Theorem \ref{Arveson_toral}.
	
		\begin{thm}\label{toral_isometries} Let $(V_1, V_2)$ be a pair of commuting isometries acting on a Hilbert space $\mathcal{H}$. Then the following are equivalent:
		\begin{enumerate}
			\item $(V_1, V_2)$ is a toral pair;
			\item $(V_1, V_2)$ has an extension to a toral pair of commuting unitaries;
			\item There is a toral polynomial $q$ such that $Z(q) \cap \DC^2$ is a spectral set for $(V_1, V_2)$;
			\item There is a toral polynomial $q$ such that $Z(q) \cap \DC^2$ is a complete spectral set for $(V_1, V_2)$.
		\end{enumerate}
	\end{thm}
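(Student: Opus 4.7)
The plan is to close the cycle of implications $(1) \Rightarrow (2) \Rightarrow (4) \Rightarrow (3) \Rightarrow (1)$, leveraging the two principal results just established, namely Proposition \ref{toral_extend} and Theorem \ref{Arveson_toral}. Since the statement is advertised as an obvious corollary of these, no new technology is required; the task is to chase definitions carefully and invoke the right previous results at the right spots.

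The implication $(1) \Rightarrow (2)$ is precisely the content of Proposition \ref{toral_extend}: every toral pair of commuting isometries extends to a toral pair of commuting unitaries. For $(2) \Rightarrow (4)$, I would begin by noting that an extension is, a fortiori, a dilation: if $(V_1,V_2) = (U_1|_\HS, U_2|_\HS)$ with $\HS$ invariant under the unitary pair $(U_1,U_2)$ on $\mathcal K \supseteq \HS$, then for every polynomial $f \in \C[z_1,z_2]$ we have $f(V_1,V_2) = f(U_1,U_2)|_\HS = P_\HS f(U_1,U_2)|_\HS$. Given that $(U_1,U_2)$ is toral, Theorem \ref{Arveson_toral} applies and delivers a toral polynomial $q$ such that $Z(q) \cap \DC^2$ is a complete spectral set for $(V_1,V_2)$. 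The implication $(4) \Rightarrow (3)$ is immediate from the fact that any complete spectral set is, by definition, a spectral set.

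It remains to argue $(3) \Rightarrow (1)$. Assume that $q$ is a toral polynomial and $Z(q) \cap \DC^2$ is a spectral set for $(V_1,V_2)$. Since $q$ is a polynomial, it lies in $Rat(Z(q) \cap \DC^2)$ with no poles, and the defining von Neumann inequality for a spectral set gives
\[
\|q(V_1, V_2)\| \;\leq\; \|q\|_{\infty,\, Z(q) \cap \DC^2} \;=\; 0.
\]
Hence $q(V_1,V_2)=0$, so $(V_1,V_2)$ is toral, completing the cycle.

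There is no genuine obstacle in this argument; the substantive content has already been absorbed into Proposition \ref{toral_extend} (which is where the non-trivial passage from isometry to unitary, together with preservation of the annihilating polynomial via Lemma \ref{Lubin2}, takes place) and into Theorem \ref{Arveson_toral} (which encodes the Arveson-type correspondence between complete spectral sets on $Z(q) \cap \DC^2$ and dilations to toral unitaries, via the identification $b(Z(q) \cap \DC^2) = Z(q) \cap \T^2$ from Theorem \ref{thm803}). The only mildly delicate point to verify explicitly is the trivial but conceptually important observation in $(2) \Rightarrow (4)$ that an extension automatically yields the compression formula needed to apply Theorem \ref{Arveson_toral}, and this takes only one line.
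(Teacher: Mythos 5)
Your proposal is correct and follows essentially the same route the paper intends: the theorem is stated there as a direct corollary of Proposition \ref{toral_extend} and Theorem \ref{Arveson_toral}, and your cycle $(1)\Rightarrow(2)\Rightarrow(4)\Rightarrow(3)\Rightarrow(1)$ simply makes the chaining explicit, with the closing step $(3)\Rightarrow(1)$ being the same von Neumann inequality observation already recorded in the paper's earlier proposition on toral isometries. No gaps.
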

	
	We now focus on proving Theorem \ref{main_toral}, one of the main results of this article. It demands a few preparatory results and they are provided below.
	
	\begin{prop}\label{prop9.08}
		If a pair of (toral) commuting contractions $(T_1, T_2)$ on a Hilbert space $\mathcal{H}$ admits a dilation to a (toral) pair of commuting isometries, then it has a minimal  dilation to a (toral) pair of commuting isometries.
	\end{prop}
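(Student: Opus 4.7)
The plan is to mimic the standard construction of the minimal isometric dilation from Sz.-Nagy–Foias theory, adapted to the commuting (and if applicable, toral) setting. Suppose $(V_1,V_2)$ on a Hilbert space $\mathcal{K}\supseteq\mathcal{H}$ is a dilation of $(T_1,T_2)$ to a commuting pair of isometries, and in the toral case let $p\in\C[z_1,z_2]$ be a toral polynomial with $p(V_1,V_2)=0$. The first step will be to define
\[
\mathcal{K}_0 \;=\; \overline{\operatorname{span}}\bigl\{\,V_1^{m_1}V_2^{m_2}h \;:\; h\in\mathcal{H},\ m_1,m_2\in\N\cup\{0\}\,\bigr\}.
\]
Taking $m_1=m_2=0$ gives $\mathcal{H}\subseteq\mathcal{K}_0$, and since $V_1,V_2$ commute and map any spanning element to another spanning element, $\mathcal{K}_0$ is jointly invariant under $V_1$ and $V_2$.

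Next I would set $W_i:=V_i|_{\mathcal{K}_0}$ for $i=1,2$. The restriction of an isometry to an invariant subspace is again an isometry, and commutativity is inherited, so $(W_1,W_2)$ is a commuting pair of isometries on $\mathcal{K}_0$. To verify that $(W_1,W_2)$ still dilates $(T_1,T_2)$, I would use the elementary observation that whenever $\mathcal{H}\subseteq\mathcal{K}_0\subseteq\mathcal{K}$, the orthogonal projection $P_{\mathcal{H}}^{\mathcal{K}_0}$ coincides with the restriction of $P_{\mathcal{H}}^{\mathcal{K}}$ to $\mathcal{K}_0$. Hence for any $h\in\mathcal{H}$ and $m_1,m_2\geq 0$,
\[
P_{\mathcal{H}}^{\mathcal{K}_0}\,W_1^{m_1}W_2^{m_2}h \;=\; P_{\mathcal{H}}^{\mathcal{K}}\,V_1^{m_1}V_2^{m_2}h \;=\; T_1^{m_1}T_2^{m_2}h,
\]
which is the required dilation identity. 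In the toral case, $p(W_1,W_2)$ is the restriction of $p(V_1,V_2)$ to the invariant subspace $\mathcal{K}_0$, so $p(W_1,W_2)=0$ and $(W_1,W_2)$ remains a toral pair of isometries.

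Finally, minimality follows immediately from the construction: $\mathcal{K}_0$ is, by definition, the closed linear span of $\{W_1^{m_1}W_2^{m_2}h:h\in\mathcal{H},\ m_1,m_2\geq0\}$, so any closed subspace of $\mathcal{K}_0$ that contains $\mathcal{H}$ and is invariant under both $W_1$ and $W_2$ must already equal $\mathcal{K}_0$. Since each step is a routine application of invariance and restriction properties, there is no substantive obstacle here; the only point requiring care is the consistency of the two orthogonal projections onto $\mathcal{H}$, which is handled by the inclusion $\mathcal{H}\subseteq\mathcal{K}_0\subseteq\mathcal{K}$.
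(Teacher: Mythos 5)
Your proposal is correct and follows essentially the same route as the paper: restrict the given isometric dilation to the closed span of $\{V_1^{m_1}V_2^{m_2}h : h\in\mathcal{H},\ m_1,m_2\ge 0\}$, note that the restriction is still a commuting pair of isometries annihilated by the same toral polynomial, and check the dilation identity via the compatibility of the orthogonal projections onto $\mathcal{H}$. No gaps.
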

	
	\begin{proof}
		Let $(\widetilde{V}_1, \widetilde{V}_2)$ on $\widetilde{\mathcal{K}} \supseteq \mathcal{H}$ be an isometric dilation of $(T_1, T_2)$. We can always have such a dilation by And\^{o}'s dilation theorem \cite{Ando}. Also, let $q$ be a toral polynomial such that $q(\widetilde{V}_1, \widetilde{V}_2)=0$. Define
		\[
		\mathcal{K}:= \overline{\text{span}}\{\widetilde{V}_1^i\widetilde{V}_2^jh \ : \ h \in \mathcal{H} \ \text{and} \ i, j \in \mathbb{N} \cup \{0\} \}.
		\] 
		It is easy to see that $\mathcal{K}$ is invariant under $\widetilde{V}_1^{i}$ and  $\widetilde{V}_2^j$ for any non-negative integers $i, j$. We denote by $(V_1,V_2)$ the following pair:
		$
		(V_1, V_2):=(\widetilde{V}_1|_\mathcal{K}, \widetilde{V}_2|_\mathcal{K}).
		$
		Then $(V_1, V_2)$ is also a commuting pair of isometries and $q(V_1, V_2)=0$. Moreover,
		\[
		\mathcal{K}= \overline{\text{span}}\{V_1^iV_2^jh \ : \ h \in \mathcal{H} \ \text{and} \ i, j \in \mathbb{N} \cup \{0\} \}.
		\] 
		Thus,
		$
		P_\mathcal{H}(V_1^iV_2^j)h=T_1^iT_2^jh	
		$
		for any $h \in \HS$ and $i, j \in \N \cup \{0\}$. Therefore, $(V_1, V_2)$ on $\mathcal{K}$ is a minimal isometric dilation of $(T_1, T_2)$ with $q(V_1, V_2)=0$. The proof is now complete.		
		\end{proof}
	
		\begin{prop}\label{prop9.09}
		Let $(T_1, T_2)$ be a commuting pair of contractions acting on a Hilbert space $\HS$ and let $(V_1, V_2)$ on $\mathcal{K} \supseteq \HS$ be a minimal isometric dilation of $(T_1,T_2)$. Then $(V_1^*, V_2^*)$ is a co-isometric extension of $(T_1^*, T_2^*)$. Conversely, if $(V_1^*, V_2^*)$ is a co-isometric extension of $(T_1^*, T_2^*)$, then $(V_1,V_2)$ is an isometric dilation of $(T_1,T_2)$.
	\end{prop}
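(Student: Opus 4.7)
The plan is to handle the two directions separately. For the forward direction, I will first show that $\HS$ is invariant under $V_1^*$ and $V_2^*$, exploiting the multiplicativity of polynomial functional calculus combined with the compression formula $f(T_1,T_2)=P_\HS f(V_1,V_2)|_\HS$. Concretely, for $h\in\HS$ and any polynomials $p,q\in\C[z_1,z_2]$, the computation
$$
P_\HS p(V_1,V_2)\,q(V_1,V_2)h \;=\; (pq)(T_1,T_2)h \;=\; p(T_1,T_2)\,q(T_1,T_2)h \;=\; P_\HS p(V_1,V_2)\,P_\HS q(V_1,V_2)h,
$$
where the last equality uses that $q(T_1,T_2)h\in\HS$ together with the dilation formula applied to $p$, yields
$$
P_\HS\, p(V_1,V_2)(I-P_\HS)\, q(V_1,V_2)h \;=\; 0.
$$
The minimality hypothesis guarantees that vectors of the form $q(V_1,V_2)h$ span a dense subspace of $\mathcal{K}$, so by continuity $P_\HS p(V_1,V_2)(I-P_\HS)=0$ on all of $\mathcal{K}$ for every polynomial $p$.

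Specializing $p(z_1,z_2)=z_i$ yields $V_i\HS^\perp\subseteq\HS^\perp$ for $i=1,2$, and passing to adjoints gives $V_i^*\HS\subseteq\HS$. Once $\HS$ is known to be invariant under $V_1^*$ and $V_2^*$, the identification $V_i^*|_\HS=T_i^*$ follows immediately: adjointing the dilation relation $P_\HS V_i|_\HS=T_i$ gives $P_\HS V_i^*|_\HS=T_i^*$, and since $V_i^*h\in\HS$ we obtain $V_i^*h=P_\HS V_i^*h=T_i^*h$. Combined with the fact that $V_1^*, V_2^*$ are commuting co-isometries (since $V_1,V_2$ are commuting isometries on $\mathcal{K}\supseteq\HS$), this completes the forward direction.

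The converse is a direct calculation and does not even require minimality. Assuming $(V_1^*,V_2^*)$ is a co-isometric extension of $(T_1^*,T_2^*)$ on some $\mathcal{K}\supseteq\HS$, so in particular $V_1,V_2$ are commuting isometries and $V_i^*h=T_i^*h$ for $h\in\HS$, then for $h,h'\in\HS$ and non-negative integers $m,n$ one has
$$
\langle V_1^mV_2^nh,\,h'\rangle \;=\; \langle h,\,V_2^{*n}V_1^{*m}h'\rangle \;=\; \langle h,\,T_2^{*n}T_1^{*m}h'\rangle \;=\; \langle T_1^mT_2^nh,\,h'\rangle.
$$
This gives $P_\HS V_1^mV_2^n|_\HS=T_1^mT_2^n$ for all $m,n\geq 0$, and by linearity $P_\HS f(V_1,V_2)|_\HS=f(T_1,T_2)$ for every $f\in\C[z_1,z_2]$, so $(V_1,V_2)$ is an isometric dilation of $(T_1,T_2)$.

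The main obstacle is the forward direction, specifically the verification that $\HS$ is $(V_1^*,V_2^*)$-invariant rather than merely semi-invariant. The multiplicativity-plus-minimality trick above is the key ingredient: the product rule for the polynomial functional calculus produces a family of orthogonality identities, and minimality then propagates them from the dense set $\{q(V_1,V_2)h : h\in\HS, q\in\C[z_1,z_2]\}$ to all of $\mathcal{K}$, forcing $\HS^\perp$ to be a joint invariant subspace for $(V_1,V_2)$.
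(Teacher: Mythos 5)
Your proof is correct and follows essentially the same route as the paper: the key step in both is to use minimality plus the compression identity on the spanning vectors $V_1^iV_2^jh$ to upgrade semi-invariance of $\HS$ to joint invariance of $\HS^\perp$ under $V_1,V_2$ (equivalently, the intertwining $T_kP_\HS=P_\HS V_k$), after which $V_k^*|_\HS=T_k^*$ follows by taking adjoints. Your multiplicativity identity with general $p,q$ specializes at $p=z_k$, $q=z_1^iz_2^j$ to exactly the computation in the paper, and the converse is handled the same way in both.
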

	
	\begin{proof}
		
		We first prove that $T_1P_\mathcal{H}=P_\mathcal{H}V_1$ and $T_2P_\mathcal{H}=P_\mathcal{H}V_2$. By definition, we have that 
		\[
		\mathcal{K}=\overline{\text{span}}\{V_1^{i}V_2^{j}h \ : \ h \in \mathcal{H} \ \text{and} \ i, j \in \mathbb{N} \cup \{0\} \}.
		\]
		For $h \in \mathcal{H}$, we have
		$
		T_1P_\mathcal{H}(V_1^{i}V_2^{j}h)=T_1(T_1^{i}T_2^{j}h)=T_1^{i+1}T_2^{j}h=P_\mathcal{H}(V_1^{i+1}V_2^{j}h)=P_\mathcal{H}V_1(V_1^{i}V_2^{j}h)$. By continuity argument we have $T_1P_\mathcal{H}=P_\mathcal{H}V_1$ and similarly $T_2P_\mathcal{H}=P_\mathcal{H}V_2$. Also, for $h \in \mathcal{H}$ and $k \in \mathcal{K}$, we have
		\[
		\langle T_1^*h, k \rangle =\langle P_\mathcal{H}T_1^*h, k \rangle =\langle T_1^*h, P_\mathcal{H}k \rangle =\langle h, T_1P_\mathcal{H}k \rangle =\langle h, P_\mathcal{H}V_1k \rangle =\langle V_1^*h, k \rangle. 
		\]
		Hence, $T_1^*=V_1^*|_\mathcal{H}$ and similarly $T_2^*=V_2^*|_\mathcal{H}$. The converse part is obvious. 
	\end{proof}
	
		\begin{lem}\label{lem910}
		Let $\mathcal{H}_1$ and $\HS_2$ be Hilbert spaces. Let $V_1=\begin{bmatrix}
			T_1 & 0 \\
			C_1 & D_1
		\end{bmatrix}$ and $V_2=\begin{bmatrix}
			T_2 & 0 \\
			C_2 & D_2
		\end{bmatrix}$ be commuting contractions acting on $\mathcal{H}_1 \oplus \mathcal{H}_2$. If $f(T_1, T_2)=0=g(D_1, D_2)$ for $f, g \in \C[z_1, z_2]$, then $f(V_1, V_2)g(V_1, V_2)=0$. Moreover, $(V_1, V_2)$ is a toral pair if and only if both $(T_1, T_2)$ and $(D_1, D_2)$ are toral pairs. 
	\end{lem}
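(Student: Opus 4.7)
\medskip

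\noindent \textbf{Proof proposal.} The plan is to exploit the block lower-triangular structure. Since $V_1, V_2$ are simultaneously block lower-triangular with respect to the decomposition $\HS_1 \oplus \HS_2$, the set of all such operators forms an algebra, and the diagonal-projection maps
\[
\begin{bmatrix} A & 0 \\ B & C \end{bmatrix} \longmapsto A, \qquad \begin{bmatrix} A & 0 \\ B & C \end{bmatrix} \longmapsto C
\]
are unital algebra homomorphisms. Applying this observation to any polynomial $p \in \C[z_1,z_2]$ I get
\[
p(V_1,V_2) \;=\; \begin{bmatrix} p(T_1,T_2) & 0 \\ X_p & p(D_1,D_2) \end{bmatrix}
\]
for some operator $X_p \in \mathcal{B}(\HS_1,\HS_2)$ depending on $p$. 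In particular, $(T_1,T_2)$ and $(D_1,D_2)$ are automatically commuting pairs.

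\medskip

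For part one, I substitute the hypothesis $f(T_1,T_2) = 0 = g(D_1,D_2)$ into the above formula to obtain
\[
f(V_1,V_2) = \begin{bmatrix} 0 & 0 \\ X_f & f(D_1,D_2) \end{bmatrix}, \qquad g(V_1,V_2) = \begin{bmatrix} g(T_1,T_2) & 0 \\ X_g & 0 \end{bmatrix}.
\]
A direct block-matrix multiplication shows that $g(V_1,V_2)\,f(V_1,V_2) = 0$, since every entry of the product contains a factor from the vanishing block. Because $V_1$ and $V_2$ commute, the operators $f(V_1,V_2)$ and $g(V_1,V_2)$ commute, hence $f(V_1,V_2)\,g(V_1,V_2) = g(V_1,V_2)\,f(V_1,V_2) = 0$. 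The cute point here, which is the only real content, is that multiplying in the \emph{opposite} order is what makes the stubborn $(2,1)$-entry disappear; this is the single step I expect a reader to want verified carefully.

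\medskip

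For part two, the forward direction is immediate: if $(V_1,V_2)$ is toral and $q$ is a toral polynomial with $q(V_1,V_2) = 0$, then reading off the diagonal blocks gives $q(T_1,T_2) = 0$ and $q(D_1,D_2) = 0$, so both $(T_1,T_2)$ and $(D_1,D_2)$ are toral. Conversely, if $f$ and $g$ are toral polynomials annihilating $(T_1,T_2)$ and $(D_1,D_2)$ respectively, then part one gives $(fg)(V_1,V_2) = 0$, and it remains to check that $fg$ is itself toral. This is a short point-set computation: $Z(fg) = Z(f) \cup Z(g)$ meets $\D^2$ because $Z(f)$ does, and
\[
Z(fg) \cap \partial \DC^2 \;=\; \bigl(Z(f) \cap \partial\DC^2\bigr) \cup \bigl(Z(g) \cap \partial\DC^2\bigr) \;=\; \bigl(Z(f) \cap \T^2\bigr) \cup \bigl(Z(g) \cap \T^2\bigr) \;=\; Z(fg) \cap \T^2,
\]
using that $f,g$ are toral. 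Hence $fg$ is a toral polynomial annihilating the commuting pair of contractions $(V_1,V_2)$, so $(V_1,V_2)$ is toral. No step seems genuinely delicate; the one place to be careful is simply to invoke commutativity of $V_1, V_2$ at the right moment in part one.
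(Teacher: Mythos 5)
Your proposal is correct and follows essentially the same route as the paper's proof: the block lower-triangular form of $p(V_1,V_2)$, the reverse-order product $g(V_1,V_2)f(V_1,V_2)=0$ combined with commutativity, and the observation that $Z(fg)=Z(f)\cup Z(g)$ makes $fg$ toral. No gaps.
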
 
	
	\begin{proof}
		For any $p \in \C[z_1, z_2]$, a routine calculation gives
		\begin{equation}\label{eqn_p(V)}
			p(V_1, V_2)=  \begin{bmatrix} 
				p(T_1, T_2) & 0  \\
				* & p(D_1, D_2) \\
			\end{bmatrix}. 
		\end{equation}
		Therefore, using the fact that $f(T_1, T_2)=0=g(D_1, D_2)$ we have
		\begin{equation*}
			\begin{split}
				f(V_1, V_2)g(V_1, V_2)
				&= g(V_1, V_2)f(V_1, V_2) \\
				&=\begin{bmatrix} 
					g(T_1, T_2) & 0  \\
					* & g(D_1, D_2) \\
				\end{bmatrix}
				\begin{bmatrix} 
					f(T_1, T_2) & 0  \\
					* & f(D_1, D_2) \\
				\end{bmatrix}\\
				&=\begin{bmatrix} 
					g(T_1, T_2) & 0  \\
					* & 0 \\
				\end{bmatrix}
				\begin{bmatrix} 
					0 & 0  \\
					* & f(D_1, D_2) \\
				\end{bmatrix}  =\begin{bmatrix} 
					0 & 0  \\
					0 & 0 \\
				\end{bmatrix}. \\
			\end{split}
		\end{equation*}
		Let $q$ be a toral polynomial such that $q(V_1, V_2)=0$. By (\ref{eqn_p(V)}), $q(T_1, T_2)=0$ and $q(D_1, D_2)=0$. Conversely, let $f$ and $g$ be toral polynomials such that $f(T_1, T_2)=0=g(D_1, D_2)$. Then, $f(V_1, V_2)g(V_1, V_2)=0$. Now $Z(fg)=Z(f) \cup Z(g)$ and so $fg$ is a toral polynomial. Thus, $(V_1, V_2)$ is a toral pair annihilated by $fg$ and the proof is complete.  
	\end{proof}

	Now we are in a position to present the proof of the main result of this Section. 
	
	\medskip	
	
\noindent \textbf{\textit{Proof of Theorem \ref{main_toral}}.} 
		$(1) \Rightarrow (2)$. Suppose $(T_1, T_2)$ dilates to a toral pair of commuting isometries $(\widetilde{V}_1, \widetilde{V}_2)$ on $\widetilde{\mathcal{K}} \supseteq \HS$. Let $q$ be a toral polynomial that annihilates $(\widetilde{V}_1, \widetilde{V}_2)$. Let us define 
		$
		(V_1, V_2):=(\widetilde{V}_1|_\mathcal{K}, \widetilde{V}_2|_\mathcal{K}),
		$
		where $
		\mathcal{K}= \overline{\text{span}}\{\widetilde{V}_1^i\widetilde{V}_2^jh \ : \ h \in \mathcal{H} \ \text{and} \ i, j \in \mathbb{N} \cup \{0\} \}
		$, the minimal isometric dilation space of $(T_1, T_2)$ with respect to the dilation $(\widetilde{V}_1, \widetilde{V}_2)$. It follows from Proposition \ref{prop9.08} that $(V_1, V_2)$ on $\mathcal{K}$ is an  isometric dilation of $(T_1, T_2)$ and $q(V_1, V_2)=0$. By Proposition \ref{prop9.09}, $(V_1^*, V_2^*)$ is a co-isometric extension of $(T_1^*, T_2^*)$. Let $\HS^\perp=\mathcal{K} \ominus \HS$. Then there exist  $C_1, C_2 \in \mathcal{B}(\HS, \HS^\perp)$ and $D_1, D_2 \in \mathcal{B}(\HS^\perp)$ such that
		\[
		V_1=\begin{bmatrix}
			T_1 & 0 \\
			C_1 & D_1
		\end{bmatrix} \quad \text{and} \quad 	V_2=\begin{bmatrix}
			T_2 & 0 \\
			C_2 & D_2
		\end{bmatrix}
		\]
		with respect to the orthogonal decomposition $\mathcal{K}=\HS \oplus \HS^{\perp}$. The fact that $(V_1, V_2)$ is a pair of commuting isometries gives the following: $(i) \, \, V_1V_2=V_2V_1$, $(ii)\, \, V_1^*V_1=I=V_2^*V_2$. Straightforward computations show that 
		\begin{equation}\label{eqn9.05}
			\begin{split}
				V_1V_2=V_2V_1 & \iff \begin{bmatrix}
					T_1 & 0 \\
					C_1 & D_1
				\end{bmatrix}\begin{bmatrix}
					T_2 & 0 \\
					C_2 & D_2
				\end{bmatrix}=\begin{bmatrix}
					T_2 & 0 \\
					C_2 & D_2
				\end{bmatrix}\begin{bmatrix}
					T_1 & 0 \\
					C_1 & D_1
				\end{bmatrix}\\
				& \iff \begin{bmatrix}
					T_1T_2 & 0 \\
					C_1T_2+D_1C_2 & D_1D_2
				\end{bmatrix}=\begin{bmatrix}
					T_2T_1 & 0 \\
					C_2T_1+D_2C_1 & D_2D_1
				\end{bmatrix}
			\end{split}
		\end{equation}
		and 
		\begin{equation}\label{eqn9.06}
			\begin{split}
				V_i^*V_i=I & \iff \begin{bmatrix}
					T_i^* & C_i^* \\
					0 & D_i^*
				\end{bmatrix}\begin{bmatrix}
					T_i & 0 \\
					C_i & D_i
				\end{bmatrix}=\begin{bmatrix}
					T_i^*T_i+C_i^*C_i & C_i^*D_i \\
					D_i^*C_i & D_i^*D_i
				\end{bmatrix}=\begin{bmatrix}
					I & 0 \\
					0 & I
				\end{bmatrix} \qquad (i=1,2).
			\end{split}
		\end{equation}
		It follows from (\ref{eqn9.05}) and (\ref{eqn9.06}) that $(D_1, D_2)$ is a commuting pair of isometries on $\HS^\perp$. Moreover, $q(D_1, D_2)=q(V_1, V_2)|_{\HS^\perp}=0$ and so, $(D_1, D_2)$ is a toral pair. Again by (\ref{eqn9.05}) and (\ref{eqn9.06}), the operator equations in condition-$(2)$ of this theorem follow immediately.
		
		\medskip
		
\noindent $(2) \Rightarrow (1)$.	Suppose there exist a Hilbert space $\mathcal{K} \supseteq \HS$, a toral pair of isometries $(D_1, D_2)$ on $\HS^\perp=\mathcal{K}\ominus \HS$ and $C_1, C_2 \in \mathcal{B}(\HS, \HS^\perp)$ such that the operator equations given in condition-$(2)$ hold. Set 
		\[
		V_1=\begin{bmatrix}
			T_1 & 0 \\
			C_1 & D_1
		\end{bmatrix} \quad \text{and} \quad 	V_2=\begin{bmatrix}
			T_2 & 0 \\
			C_2 & D_2
		\end{bmatrix}
		\]
		with respect to orthogonal decomposition $\mathcal{K}=\HS \oplus \HS^{\perp}$. In view of the operator equations in condition-$(2)$, it is evident from (\ref{eqn9.05}) and (\ref{eqn9.06}) that $(V_1, V_2)$ is a pair of commuting isometries.  Evidently, $T_1^*=V_1^*|_\mathcal{H}, T_2^*=V_2^*|_\mathcal{H}$ and hence $(V_1, V_2)$ dilates $(T_1, T_2)$. It remains to prove that $(V_1, V_2)$ is a toral pair. Since $(T_1, T_2)$ and $(D_1, D_2)$ are toral pairs, there are toral polynomials $f$ and $g$ such that $f(T_1, T_2)=0=g(D_1, D_2)$. It follows from Lemma \ref{lem910} that the toral polynomial $fg$ annihilates $(V_1, V_2)$ and so, $(V_1, V_2)$ is a toral pair.
		
		\medskip
		
		\noindent $(1) \Leftrightarrow (3)$. This follows from Theorems \ref{Arveson_toral} \& \ref{toral_isometries}.	
		
\medskip		
		
Moreover, in view of Proposition \ref{toral_extend} it is clear from the proof of $(2) \Rightarrow (1)$ that $(V_1, V_2)$ extends a pair of commuting unitaries $(U_1, U_2)$ such that $f(U_1, U_2)g(U_1, U_2)=0$. So, $(T_1, T_2)$ dilates to $(U_1, U_2)$. It follows from Theorem \ref{Arveson_toral} that $Z(fg) \cap \DC^2$ is a complete spectral set for $(T_1, T_2)$. The proof is now complete. \qed

		\section{Dilation of $\Gamma$-distinguished $\Gamma$-contractions}\label{sec08}

	
	 \noindent In this Section, we first find analogues of the results of Section \ref{sec07} in the symmetrized bidisc setting. Then we investigate more about dilation of a $\Gamma$-distinguished $\Gamma$-contraction $(S,P)$ when the defect spaces $\mathcal D_P, \, \mathcal D_{P^*}$ are finite dimensional. We begin with the most expected and natural lemma that guarantees that every $\Gamma$-distinguished $\Gamma$-isometry extends to a $\Gamma$-distinguished $\Gamma$-unitary. 
	
		\begin{prop}\label{prop8.1}
		Every $\Gamma$-distinguished $\Gamma$-isometry admits a $\Gamma$-distinguished $\Gamma$-unitary extension.
	\end{prop}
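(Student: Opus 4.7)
The plan is to mimic the strategy used in the alternative proof of Proposition \ref{toral_extend} (which handles the bidisc analogue), invoking Lubin's theorem (Lemma \ref{Lubin2}) on commuting subnormal tuples. Let $(S,P)$ be a $\Gamma$-distinguished $\Gamma$-isometry on $\HS$, and let $q \in \C[z_1,z_2]$ be a $\Gamma$-distinguished polynomial with $q(S,P)=0$. By the very definition of a $\Gamma$-isometry, there exists some $\Gamma$-unitary $(\widetilde{S},\widetilde{P})$ on a Hilbert space $\widetilde{\HS}\supseteq \HS$ such that $\HS$ is a common invariant subspace and $S=\widetilde{S}|_{\HS}$, $P=\widetilde{P}|_{\HS}$. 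Thus $(S,P)$ is a commuting pair of subnormal operators.

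I would then pass to the minimal common reducing subspace of $\widetilde{S},\widetilde{P}$ containing $\HS$, namely
\[
\mathcal{K}=\overline{\mathrm{span}}\{\widetilde{S}^{*m}\widetilde{P}^{*n}h \ : \ h\in \HS,\ m,n\in \N\cup\{0\}\},
\]
and set $(T,U):=(\widetilde{S}|_{\mathcal{K}},\widetilde{P}|_{\mathcal{K}})$. The standard argument shows that $\mathcal{K}$ reduces both $\widetilde{S}$ and $\widetilde{P}$, so $T$ and $U$ remain normal and commute; furthermore, since the Taylor joint spectrum of the restriction of a commuting normal tuple to a reducing subspace is contained in the joint spectrum of the original tuple, $\sigma_T(T,U)\subseteq \sigma_T(\widetilde{S},\widetilde{P})\subseteq b\Gamma$. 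Hence $(T,U)$ is itself a $\Gamma$-unitary, and by construction it is the minimal commuting normal extension (minimal c.n.e.) of the subnormal pair $(S,P)$.

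The final step is to apply Lemma \ref{Lubin2} to the polynomial $q$: it yields that $q(T,U)$ is unitarily equivalent to the minimal normal extension of $q(S,P)$. But $q(S,P)=0$ by hypothesis, whose minimal normal extension is of course the zero operator. Therefore $q(T,U)=0$, and so $(T,U)$ is a $\Gamma$-distinguished $\Gamma$-unitary extension of $(S,P)$, completing the proof.

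There is essentially no obstacle here beyond correctly identifying the minimal c.n.e. as a $\Gamma$-unitary and invoking Lubin's theorem; the delicate point to double-check is just that restricting to a reducing subspace keeps the joint spectrum inside $b\Gamma$, which follows from the normal functional calculus. The whole argument is parallel to the proof of Proposition \ref{toral_extend}, with And\^o's theorem there replaced by the existing dilation theory for $\Gamma$-contractions due to Agler--Young and Pal.
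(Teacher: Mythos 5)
Your proposal is correct and follows essentially the same route as the paper: pass to the minimal commuting normal extension $(T,U)$ of the $\Gamma$-isometry inside a $\Gamma$-unitary extension, observe that it remains a $\Gamma$-unitary, and apply Lubin's theorem (Lemma \ref{Lubin2}) to conclude that the annihilating $\Gamma$-distinguished polynomial also kills $(T,U)$. The only cosmetic difference is that you spell out the joint-spectrum containment for the restriction to the reducing subspace, which the paper leaves implicit.
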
	
	
		\begin{proof}
		Let $(T,V)$ be a $\Gamma$-isometry acting on a Hilbert space $\mathcal{H}$ and $f(T, V)=0$ for some $\Gamma$-distinguished polynomial $f(z_1, z_2)$. Let $(N_1, N_2)$ be a $\Gamma$-unitary extension of $(T, V)$ and also let, 
		\[
		\mathcal{K}=\overline{span}\left\{N_1^{*i}N_2^{*j}h \ : \ h \in \mathcal{H}, \  i, j \in \mathbb{N}\cup \{0\} \right\}.
		\]		
		We now prove that the minimal c.n.e. $(U_1, U_2)=(N_1|_\mathcal{K}, N_2|_\mathcal{K})$ of $(T, V)$ is a $\Gamma$-distinguished $\Gamma$-unitary. Since $(N_1, N_2)$ is a $\Gamma$-unitary and $\mathcal{K}$ is a common reducing subspace of $N_1, N_2$, thus, $(U_1, U_2)$ is a $\Gamma$-unitary on $\mathcal{K}$. 
		It follows from Lemma \ref{Lubin2} that $f(U_1, U_2)$ is unitarily equivalent to the minimal normal extension of $f(T, V)$ and thus, $f(U_1, U_2)=0$. Therefore, $(U_1, U_2)$ is a $\Gamma$-distinguished $\Gamma$-unitary on $\mathcal{K}$ so that $U_1|_\mathcal{H}=T$ and $U_2|_\mathcal{H}=V$.	
\end{proof}
	
	After the above theorem, it is evident that dilation of a $\Gamma$-contraction to a $\Gamma$-distinguished $\Gamma$-isometry	 implies and is implied by dilation to a $\Gamma$-distinguished $\Gamma$-unitary. The next corollary is an easy consequence of the above proposition.
	
		\begin{cor}
		A $\Gamma$-isometry is $\Gamma$-distinguished if and only if it extends to a $\Gamma$-distinguished $\Gamma$-unitary.
	\end{cor}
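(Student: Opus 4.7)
The statement is a biconditional whose forward direction (if the $\Gamma$-isometry is $\Gamma$-distinguished, then it extends to a $\Gamma$-distinguished $\Gamma$-unitary) is precisely the content of Proposition \ref{prop8.1} just proved. So the proof reduces to the converse direction, which I plan to handle in a single short paragraph.

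For the converse, suppose the $\Gamma$-isometry $(T,V)$ on $\mathcal{H}$ admits an extension to a $\Gamma$-distinguished $\Gamma$-unitary $(U_1,U_2)$ on some larger Hilbert space $\mathcal{K} \supseteq \mathcal{H}$. By definition of extension, $\mathcal{H}$ is a common invariant subspace of $U_1, U_2$ with $T = U_1|_{\mathcal{H}}$ and $V = U_2|_{\mathcal{H}}$. Because $\mathcal{H}$ is invariant under both operators, one has $q(T,V) = q(U_1,U_2)|_{\mathcal{H}}$ for every polynomial $q \in \mathbb{C}[z_1,z_2]$. By hypothesis, there exists a $\Gamma$-distinguished polynomial $p$ with $p(U_1,U_2) = 0$. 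Restricting to $\mathcal{H}$ yields $p(T,V) = 0$, so $(T,V)$ is annihilated by a $\Gamma$-distinguished polynomial and hence is a $\Gamma$-distinguished $\Gamma$-isometry.

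There is no real obstacle here; the argument is purely formal and relies only on the invariance built into the definition of extension. The substantive content sits on the forward side (Proposition \ref{prop8.1}), where the nontrivial step was ensuring that passing to the minimal commuting normal extension preserves the annihilating $\Gamma$-distinguished polynomial via Lemma \ref{Lubin2}.
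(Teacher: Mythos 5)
Your proof is correct and matches the paper's intent: the paper states this corollary as an immediate consequence of Proposition \ref{prop8.1}, with the converse following exactly as you argue, by restricting the annihilating $\Gamma$-distinguished polynomial of the $\Gamma$-unitary extension to the invariant subspace.
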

	
		We look a little deeper into the nature of a $\Gamma$-distinguished $\Gamma$-unitary and have the following.
		 
		\begin{prop}\label{G_distt._G_unitary}
		Let $(U_1, U_2)$ be a  $\Gamma$-unitary and let $p \in \C[z_1, z_2]$. Then $p(U_1, U_2)=0$ if and only if $Z(p) \cap b\Gamma$ is a spectral set for $(U_1, U_2)$.
	\end{prop}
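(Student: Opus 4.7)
The plan is to prove both directions using two classical ingredients stated in the paper: the spectral mapping theorem for the Taylor joint spectrum (applied to commuting normal tuples), and the lemma that for a commuting tuple of normal operators, a compact set is a spectral set if and only if it contains the Taylor joint spectrum.

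For the forward direction, I would start by assuming $p(U_1,U_2)=0$. Since $(U_1,U_2)$ is a $\Gamma$-unitary, the two operators are commuting normal operators and $\sigma_T(U_1,U_2) \subseteq b\Gamma$. Applying the spectral mapping theorem for the Taylor joint spectrum to the polynomial $p$, I get
\[
\{0\} = \sigma\bigl(p(U_1,U_2)\bigr) = p\bigl(\sigma_T(U_1,U_2)\bigr),
\]
which forces $\sigma_T(U_1,U_2) \subseteq Z(p)$. Combining this with $\sigma_T(U_1,U_2) \subseteq b\Gamma$, I conclude $\sigma_T(U_1,U_2) \subseteq Z(p) \cap b\Gamma$. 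Since $Z(p) \cap b\Gamma$ is compact (as $b\Gamma$ is compact and $Z(p)$ is closed) and $(U_1,U_2)$ is a normal commuting pair, the lemma from Section \ref{sec02} immediately yields that $Z(p) \cap b\Gamma$ is a spectral set for $(U_1,U_2)$.

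For the converse, the key observation is that the polynomial $p$ is itself in $\mathrm{Rat}(Z(p) \cap b\Gamma)$ and vanishes identically on $Z(p) \cap b\Gamma$, so $\|p\|_{\infty,\,Z(p) \cap b\Gamma}=0$. Assuming $Z(p) \cap b\Gamma$ is a spectral set for $(U_1,U_2)$, von Neumann's inequality gives
\[
\|p(U_1,U_2)\| \leq \|p\|_{\infty,\,Z(p) \cap b\Gamma} = 0,
\]
forcing $p(U_1,U_2)=0$.

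There is no real obstacle here since both pieces of machinery are quoted verbatim in Section \ref{sec02}; the only minor point to flag is that the spectral set hypothesis tacitly requires $Z(p) \cap b\Gamma$ to be non-empty, which is automatic in the forward direction because $\sigma_T(U_1,U_2)$ is non-empty, and in the converse direction because the spectral set axiom (as stated) presumes $\sigma_T(U_1,U_2) \subseteq Z(p) \cap b\Gamma$, so the set is populated by the spectrum of $(U_1,U_2)$.
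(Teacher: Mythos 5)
Your proof is correct. The forward direction is the same as the paper's: spectral mapping gives $\sigma_T(U_1,U_2)\subseteq Z(p)\cap b\Gamma$, and the lemma on normal tuples from Section \ref{sec02} upgrades the spectral inclusion to the spectral set property. For the converse, however, you take a slightly different (and in fact more economical) route. The paper only extracts from the spectral set hypothesis the inclusion $\sigma_T(U_1,U_2)\subseteq Z(p)\cap b\Gamma$, and then uses normality of $p(U_1,U_2)$ to identify its norm with its spectral radius, which is $0$ by the spectral mapping theorem. You instead invoke the von Neumann inequality part of the spectral set definition directly on the function $p\in Rat(Z(p)\cap b\Gamma)$, which vanishes identically on $Z(p)\cap b\Gamma$, so $\|p(U_1,U_2)\|\le 0$. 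Your argument does not use normality at all, so it would prove the corresponding implication for an arbitrary commuting pair admitting $Z(p)\cap b\Gamma$ as a spectral set; the paper's argument is tied to the normal setting but needs only the spectral inclusion, not the full inequality. Your remark about non-emptiness of $Z(p)\cap b\Gamma$ is a fair point and is handled correctly in both directions.
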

	
		\begin{proof}
		Suppose that a polynomial $p$ annihilates a $\Gamma$-unitary $(U_1, U_2)$. Then the spectral mapping theorem implies that
		$p(\sigma_T(U_1, U_2))=\sigma(p(U_1, U_2))=\{0\}$. Thus, $\sigma_T(U_1, U_2) \subseteq Z(p) \cap b\Gamma$. Conversely, let us assume that $p(\sigma_T(U_1, U_2))=\{0\}$. Since $U_1$ and $U_2$ are commuting normal operators, $p(U_1, U_2)$ is a normal operator too and we have 
		\[
		\|p(U_1, U_2)\|=\sup\{|\lambda|: \lambda \in \sigma(p(U_1, U_2))\}=\sup\{|\lambda|: \lambda \in p(\sigma_T(U_1, U_2))\}=0.
		\]
		Therefore, $p$ annihilates $(U_1, U_2)$.
	\end{proof}
	
	It follows from the above proposition that a $\Gamma$-unitary $(U_1, U_2)$ is annihilated by a $\Gamma$-distinguished polynomial $p$ if and only if $\sigma_T(U_1, U_2) \subseteq Z(p)\cap b\Gamma$. The latter is possible if and only if  $Z(p) \cap b\Gamma$ is a spectral set for $(U_1, U_2)$. We prove an analogue of the above result for the $\Gamma$-isometries. If $(V_1, V_2)$ is a $\Gamma$-isometry on a Hilbert space $\mathcal{H}$ then it has a $\Gamma$-unitary extension $(U_1, U_2)$ on some space $\mathcal{K} \supseteq \mathcal{H}$. For any polynomial $p(z_1, z_2)$, it follows that $p(V_1, V_2)=p(U_1, U_2)|_{\mathcal{H}}$. This shows that $p(V_1, V_2)$ is a subnormal operator and hence a hyponormal operator. It is a well-known result \cite{Stampfli} that for a hyponormal operator $T$, $\|T\|$ is the spectral radius of $T$. Thus, $\|p(V_1, V_2)\|$ is equal to the spectral radius of $p(V_1, V_2)$ and we can easily prove the following result.
	
		\begin{prop}
		A $\Gamma$-isometry $(V_1, V_2)$ is $\Gamma$-distinguished if and only if  there is a $\Gamma$-distinguished polynomial $p$ such that $Z(p) \cap \Gamma$ is a spectral set for $(V_1, V_2)$.
	\end{prop}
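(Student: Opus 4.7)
The plan is to prove both implications using the tools already established. For the forward direction, suppose $(V_1,V_2)$ is $\Gamma$-distinguished, so that $p(V_1,V_2)=0$ for some $\Gamma$-distinguished polynomial $p$. By Proposition \ref{prop8.1}, $(V_1,V_2)$ extends to a $\Gamma$-distinguished $\Gamma$-unitary $(U_1,U_2)$ on some $\mathcal{K}\supseteq \mathcal{H}$; inspecting the construction in that proof (which invokes Lemma \ref{Lubin2}) shows that the same polynomial $p$ annihilates $(U_1,U_2)$. Then Proposition \ref{G_distt._G_unitary} gives that $Z(p)\cap b\Gamma$ is a spectral set for $(U_1,U_2)$, and since $Z(p)\cap b\Gamma \subseteq Z(p)\cap \Gamma$ and $Rat(Z(p)\cap \Gamma)\subseteq Rat(Z(p)\cap b\Gamma)$, the larger set $Z(p)\cap \Gamma$ is also a spectral set for $(U_1,U_2)$.

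The next step is to transfer the spectral set property from $(U_1,U_2)$ to its restriction $(V_1,V_2)$. Since $\mathcal{H}$ is invariant under $U_1,U_2$, one has $p_0(V_1,V_2)=p_0(U_1,U_2)|_{\mathcal{H}}$ for every polynomial $p_0$. For a rational function $f=g/q \in Rat(Z(p)\cap \Gamma)$, I would first observe $\sigma_T(V_1,V_2)\subseteq Z(p)\cap \Gamma$: the containment in $\Gamma$ comes from $(V_1,V_2)$ being a $\Gamma$-isometry, and the containment in $Z(p)$ follows from spectral mapping for the Taylor spectrum applied to $p(V_1,V_2)=0$. Consequently $q(V_1,V_2)$ is invertible on $\mathcal{H}$, and since $q(V_1,V_2)$ is the restriction of $q(U_1,U_2)$ to the invariant subspace $\mathcal{H}$, a short argument shows that $q(U_1,U_2)^{-1}$ also leaves $\mathcal{H}$ invariant with $q(U_1,U_2)^{-1}|_{\mathcal{H}}=q(V_1,V_2)^{-1}$. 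Therefore $f(V_1,V_2)=f(U_1,U_2)|_{\mathcal{H}}$, and
\[
\|f(V_1,V_2)\| \leq \|f(U_1,U_2)\| \leq \|f\|_{\infty,\, Z(p)\cap \Gamma},
\]
which shows $Z(p)\cap \Gamma$ is a spectral set for $(V_1,V_2)$.

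For the backward direction, assume $Z(p)\cap \Gamma$ is a spectral set for $(V_1,V_2)$ for some $\Gamma$-distinguished polynomial $p$. Viewing $p$ itself as a member of $Rat(Z(p)\cap \Gamma)$ (with trivial denominator $1$), the spectral set definition applied to $p$ yields
\[
\|p(V_1,V_2)\| \leq \|p\|_{\infty,\, Z(p)\cap \Gamma}=0,
\]
since $p$ vanishes identically on $Z(p)\cap \Gamma$. Hence $p(V_1,V_2)=0$, and $(V_1,V_2)$ is $\Gamma$-distinguished.

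The main technical obstacle is the rational functional calculus transfer in the forward direction. Polynomial calculus descends to invariant subspaces for free, but for rational functions one must show that the inverse $q(U_1,U_2)^{-1}$ of the denominator also preserves $\mathcal{H}$; this requires the observation that $\sigma_T(V_1,V_2)\subseteq Z(p)\cap \Gamma$ (so that $q(V_1,V_2)$ is already invertible on $\mathcal{H}$) followed by the invariant-subspace argument that identifies the restricted inverse with $q(V_1,V_2)^{-1}$. Once this technical point is handled, both implications fall out quickly from the preceding results in the paper.
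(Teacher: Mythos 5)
Your proof is correct, but it routes both implications differently from the paper. For the forward direction, the paper simply notes that $\sigma_T(V_1,V_2)\subseteq Z(p)\cap\Gamma$ by spectral mapping and then invokes the fact that the Taylor spectrum of a $\Gamma$-isometry is a spectral set (which rests on the observation, made just before the proposition, that $f(V_1,V_2)$ is subnormal, hence hyponormal, hence has norm equal to its spectral radius); the larger set $Z(p)\cap\Gamma$ is then automatically a spectral set. You instead pass to the $\Gamma$-distinguished $\Gamma$-unitary extension, apply the $\Gamma$-unitary proposition to get the spectral-set property there, and transfer it down by showing that the rational functional calculus restricts to the invariant subspace; your care with the invertibility of the denominator (via $\sigma_T(V_1,V_2)\subseteq Z(p)\cap\Gamma$ and the identity $q(U_1,U_2)^{-1}|_{\mathcal H}=q(V_1,V_2)^{-1}$) is exactly the point the paper leaves implicit, so your version is longer but more self-contained. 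For the backward direction you are actually more efficient than the paper: you apply the von Neumann inequality directly to $p\in Rat(Z(p)\cap\Gamma)$ to get $\|p(V_1,V_2)\|\leq\|p\|_{\infty,\,Z(p)\cap\Gamma}=0$, whereas the paper takes a detour through $\sigma(p(V_1,V_2))=\{0\}$ and subnormality. Both arguments are sound; no gaps.
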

	
		\begin{proof}
		Let $(V_1, V_2)$ be  annihilated by a $\Gamma$-distinguished polynomial $p$. Then $\sigma_T(V_1, V_2) \subseteq Z(p) \cap \Gamma$. Since $\sigma_T(V_1, V_2)$ is a spectral set for $(V_1, V_2)$, we have that $Z(p) \cap \Gamma$ is a spectral set for $(V_1, V_2)$. Conversely, if there is a $\Gamma$-distinguished polynomial $p$ such that $Z(p) \cap \Gamma$ is a spectral set for $(V_1, V_2)$, then $\sigma(p(V_1, V_2))=\{0\}$. Since $V_1$ and $V_2$ have commuting normal extensions, it follows that $p(V_1, V_2)$ is a subnormal operator. Hence, 
		$
		\|p(V_1, V_2)\|= \sup\{|z| \; : \; z \in \sigma(p(V_1, V_2))\}=0. 
		$
		This gives that $p(V_1, V_2)=0$.
	\end{proof}
	
	Now, we give a counter-example to show that the above results are not true in general for a $\Gamma$-contraction. Indeed, we show that there is a $\Gamma$-contraction $(S,P)$ that is annihilated by a $\Gamma$-distinguished polynomial $q$ but $Z(q) \cap \Gamma$ is not a spectral set for $(S,P)$.
	
		\begin{eg}
		Consider the commuting operators acting on $\mathbb{C}^4$ given by
		\[A_1=\begin{pmatrix}
			0 & 0 & 0 & 0\\
			1 & 0 & 0 & 0\\
			0 & 1 & 0 & 0\\
			1 & 0 & 1 & 0\\
		\end{pmatrix}	\quad \text{and} \quad   A_2=\begin{pmatrix}
			0 & 0 & 0 & 0\\
			0 & 0 & 0 & 0\\
			-1 & 0 & 0 & 0\\
			0 & -1 & 0 & 0\\
		\end{pmatrix}.
		\]
		We define $T_1=rA_1$ and $T_2=rA_2$, where $r \ne 0$ is chosen in such a way that $\|T_1\|, \|T_2\| \leq 1$. The commuting pair of operators $(S,P)=(T_1+T_2, T_1T_2)$ is a $\Gamma$-contraction and is given by 
		\[
		S=r\begin{pmatrix}
			0 & 0 & 0 & 0\\
			1 & 0 & 0 & 0\\
			-1 & 1 & 0 & 0\\
			1 & -1 & 1 & 0\\
		\end{pmatrix}, \quad P=r^2\begin{pmatrix}
			0 & 0 & 0 & 0\\
			0 & 0 & 0 & 0\\
			0 & 0 & 0 & 0\\
			-1 & 0 & 0 & 0\\
		\end{pmatrix}.
		\]
		We have
		\[
		4P-S^2=4r^2\begin{pmatrix}
			0 & 0 & 0 & 0\\
			0 & 0 & 0 & 0\\
			0 & 0 & 0 & 0\\
			-1 & 0 & 0 & 0\\
		\end{pmatrix}-r^2\begin{pmatrix}
			0 & 0 & 0 & 0\\
			0 & 0 & 0 & 0\\
			1 & 0 & 0 & 0\\
			-2 & 1 & 0 & 0\\
		\end{pmatrix}=r^2\begin{pmatrix}
			0 & 0 & 0 & 0\\
			0 & 0 & 0 & 0\\
			-1 & 0 & 0 & 0\\
			-2 & -1 & 0 & 0\\
		\end{pmatrix}.
		\] 
		This shows that the $\Gamma$-distinguished polynomial $p(z_1, z_2)=(4z_2-z_1^2)^2$ annihilates $(S,P)$. Let if possible, $Z(p) \cap \Gamma$ be a spectral set for $(S,P)$ then we must have that 
		\[
		\|f(S,P)\|\leq \|f\|_{\infty, Z(p)\cap \Gamma}=\sup \{|f(z_1, z_2)|: (z_1, z_2) \in Z(p) \cap \Gamma \}
		\] 
		for every $f \in \mathbb{C}[z_1, z_2]$. In particular, the von Neumann's inequality must hold for $f(z_1, z_2)=4z_2-z_1^2$.  Since $Z(f)=Z(p)$, we have $\|f\|_{\infty, Z(p)\cap \Gamma}=0$ and $\|f(S,P)\|=\|4P-S^2\| > 0$. Hence, $Z(p) \cap \Gamma$ is a not a spectral set for $(S,P)$. 
		\qed 
	\end{eg}
	
	The following is an analogue of Theorem \ref{Arveson_toral} and is a main result of this Section.
	
		\begin{thm}\label{Main}
		Let $(S,P)$ be a $\Gamma$-contraction on a Hilbert space $\mathcal{H}$. Then $(S,P)$ admits a $\Gamma$-distinguished $\Gamma$-unitary dilation if and only if there is a $\Gamma$-distinguished polynomial $p(z_1, z_2)$ such that $Z(p) \cap \Gamma$ is a complete spectral set for $(S,P)$.
	\end{thm}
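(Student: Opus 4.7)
The plan is to imitate the structure of the proof of Theorem \ref{Arveson_toral}, replacing the bidisc ingredients with their symmetrized counterparts. The two classical tools that drive the argument are Arveson's theorem (Theorem \ref{thm_Arveson}) and the identification of the distinguished boundary $b(Z(p) \cap \Gamma) = Z(p) \cap b\Gamma$ established in Theorem \ref{dist_var}. These will allow us to translate a $\Gamma$-distinguished $\Gamma$-unitary dilation into a normal $bX$-dilation with $X = Z(p) \cap \Gamma$, and back.

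\medskip

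For the forward direction, suppose $(S,P)$ dilates to a $\Gamma$-distinguished $\Gamma$-unitary $(T,U)$ acting on some $\mathcal{K} \supseteq \mathcal{H}$ with $p(T,U)=0$ for a $\Gamma$-distinguished polynomial $p$. First I would use Proposition \ref{G_distt._G_unitary} to conclude $\sigma_T(T,U) \subseteq Z(p) \cap b\Gamma$, and then invoke Theorem \ref{dist_var} to rewrite this as $\sigma_T(T,U) \subseteq b(Z(p) \cap \Gamma)$. The dilation relation $f(S,P) = P_\mathcal{H} f(T,U)|_\mathcal{H}$ holds a priori for all $f \in \mathrm{Rat}(\Gamma)$, and in particular for all polynomials. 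Since $\Gamma$ is polynomially convex, Lemma \ref{lem801} shows that $Z(p) \cap \Gamma$ is polynomially convex, so Proposition \ref{basicprop:01} lets me promote the polynomial dilation relation to all $f \in \mathrm{Rat}(Z(p) \cap \Gamma)$. Applying Arveson's theorem (Theorem \ref{thm_Arveson}) with $X = Z(p) \cap \Gamma$ then yields that $Z(p) \cap \Gamma$ is a complete spectral set for $(S,P)$.

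\medskip

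For the converse, suppose $Z(p) \cap \Gamma$ is a complete spectral set for $(S,P)$ for some $\Gamma$-distinguished polynomial $p$. Arveson's theorem supplies a commuting normal pair $(T,U)$ on some $\mathcal{K} \supseteq \mathcal{H}$ with $\sigma_T(T,U) \subseteq b(Z(p) \cap \Gamma)$ such that $f(S,P) = P_\mathcal{H} f(T,U)|_\mathcal{H}$ for every $f \in \mathrm{Rat}(Z(p) \cap \Gamma)$. By Theorem \ref{dist_var}, $b(Z(p) \cap \Gamma) = Z(p) \cap b\Gamma \subseteq b\Gamma$, so $(T,U)$ is a commuting pair of normal operators with joint spectrum in $b\Gamma$; this is precisely the definition of a $\Gamma$-unitary. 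Finally, since $\sigma_T(T,U) \subseteq Z(p)$, Proposition \ref{G_distt._G_unitary} gives $p(T,U)=0$, so $(T,U)$ is a $\Gamma$-distinguished $\Gamma$-unitary dilation of $(S,P)$, completing the proof.

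\medskip

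The core content of the argument is genuinely packaged into the two earlier inputs (Theorem \ref{dist_var} and Proposition \ref{G_distt._G_unitary}); once those are in hand, the theorem is essentially an application of Arveson's theorem. The most delicate step to get right is the passage from polynomial dilation to rational dilation on $Z(p) \cap \Gamma$: I need $Z(p) \cap \Gamma$ to be polynomially convex to invoke Proposition \ref{basicprop:01}, which requires the polynomial convexity of $\Gamma$ itself together with Lemma \ref{lem801}. The other point that deserves a line of care is that in the converse direction, a normal pair with spectrum in $b\Gamma$ is automatically a $\Gamma$-unitary by definition, so no further work is needed to upgrade from a normal $bX$-dilation to a $\Gamma$-unitary dilation.
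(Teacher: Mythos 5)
Your proposal is correct and follows essentially the same route as the paper's proof: both directions rest on Arveson's theorem combined with the identification $b(Z(p)\cap\Gamma)=Z(p)\cap b\Gamma$ from Theorem \ref{dist_var}, the spectral characterization in Proposition \ref{G_distt._G_unitary}, and the passage from polynomial to rational dilation via Lemma \ref{lem801} and Proposition \ref{basicprop:01}. No substantive differences from the paper's argument.
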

	
		\begin{proof}
		Assume that	$(S,P)$ admits a $\Gamma$-unitary dilation $(U_1, U_2)$ on $\mathcal{K} \supseteq \mathcal{H}$ such that $p(U_1, U_2)=0$ for some $\Gamma$-distinguished polynomial $p$. It follows from Proposition \ref{G_distt._G_unitary} and Theorem \ref{dist_var} that $b(Z(p) \cap \Gamma)=Z(p) \cap b\Gamma$ and that $b(Z(p) \cap \Gamma)$ is a spectral set for $(U_1, U_2)$. Furthermore, for every $f \in Rat(\Gamma)$, we have 
		\begin{equation}\label{eq:9.1}
			f(S, P)=P_\mathcal{H}f(U_1, U_2)|_\mathcal{H}.
		\end{equation}
		In particular, $(\ref{eq:9.1})$ holds for every polynomial $f$ in two variables. By Proposition \ref{basicprop:01} and Lemma \ref{lem801}, $(\ref{eq:9.1})$ holds for every function $f$ in $Rat(Z(p) \cap \Gamma)$.  Consequently, there is a commuting pair of normal operators $(U_1, U_2)$ on $\mathcal{K} \supseteq \mathcal{H}$ with $\sigma_T(U_1, U_2)\subseteq b(Z(p)\cap \Gamma)$ such that 
		$
		f(S, P)=P_\mathcal{H}f(U_1, U_2)|_\mathcal{H}
		$
		for every $f \in Rat(Z(p) \cap \Gamma)$. By Theorem \ref{thm_Arveson} (Arveson's theorem), $Z(p) \cap \Gamma$ is a complete spectral set for $(S, P)$.
		
		\smallskip
		
		Conversely, assume that $Z(p) \cap \Gamma$ is a complete spectral set for $(S,P)$ for some $\Gamma$-distinguished polynomial $p$. Again by Arveson's theorem, there is a commuting pair of normal operators $(U_1, U_2)$ on $\mathcal{K} \supseteq \mathcal{H}$ such that $\sigma_T(U_1, U_2)\subseteq b(Z(p)\cap \Gamma)$ and $f(S, P)=P_\mathcal{H}f(U_1, U_2)|_\mathcal{H}$ for every holomorphic polynomial $f$ in two variables. It follows from Theorem \ref{dist_var} that $b(Z(p) \cap \Gamma)=Z(p) \cap b\Gamma$ and so, $\sigma_T(U_1, U_2) \subseteq Z(p) \cap b\Gamma$. So, by Proposition \ref{G_distt._G_unitary}, $(U_1, U_2)$ is a $\Gamma$-unitary and $p(U_1, U_2)=0$. Hence, $(U_1, U_2)$ is a $\Gamma$-distinguished $\Gamma$-unitary that dilates $(S,P)$. The proof is now complete.		
	\end{proof}

The following theorem follows as an immediate corollary of Proposition \ref{prop8.1} and Theorem \ref{Main}.
	
\begin{thm}\label{G_isometries}
		 Let $(V_1, V_2)$ be a $\Gamma$-isometry on a space $\mathcal{H}$. Then the following are equivalent:
		\begin{enumerate}
			\item $(V_1, V_2)$ is $\Gamma$-distinguished;
			\item $(V_1, V_2)$ has a $\Gamma$-distinguished $\Gamma$-unitary extension;
			\item There is a $\Gamma$-distinguished polynomial $p$ such that $Z(p) \cap \Gamma$ is a spectral set for $(V_1, V_2)$;
			\item There is a $\Gamma$-distinguished polynomial $p$ such that $Z(p) \cap \Gamma$ is a complete spectral set for $(V_1, V_2)$.
		\end{enumerate}
	\end{thm}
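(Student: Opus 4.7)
The proof strategy is to close a short cycle of implications by invoking the preceding machinery; almost nothing genuinely new needs to be proved. The plan is to establish $(1)\Leftrightarrow(2)$ via Proposition \ref{prop8.1}, note that $(1)\Leftrightarrow(3)$ is already proved in the proposition stated just before Theorem \ref{Main}, obtain $(4)\Rightarrow(3)$ as a triviality (any complete spectral set is a spectral set), and finally derive $(1)\Rightarrow(4)$ from Theorem \ref{Main}.

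For $(1)\Rightarrow(2)$ I would directly apply Proposition \ref{prop8.1}. The converse $(2)\Rightarrow(1)$ is an almost trivial restriction argument: if $(U_1,U_2)$ is a $\Gamma$-distinguished $\Gamma$-unitary extension of $(V_1,V_2)$ with $p(U_1,U_2)=0$ for a $\Gamma$-distinguished polynomial $p$, then since $\HS$ is invariant under both $U_1$ and $U_2$ we have $p(V_1,V_2)=p(U_1,U_2)|_{\HS}=0$, showing $(V_1,V_2)$ is $\Gamma$-distinguished. The equivalence $(1)\Leftrightarrow(3)$ requires no new work, as it is exactly the content of the proposition preceding Theorem \ref{Main} (where the non-obvious direction $(3)\Rightarrow(1)$ is handled by observing that $p(V_1,V_2)$ is subnormal and hence $\|p(V_1,V_2)\|$ equals its spectral radius, which vanishes once $Z(p)\cap\Gamma$ is a spectral set).

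To close the cycle I would argue $(1)\Rightarrow(4)$ as follows. Starting from a $\Gamma$-distinguished $\Gamma$-isometry $(V_1,V_2)$, Proposition \ref{prop8.1} upgrades it to a $\Gamma$-distinguished $\Gamma$-unitary extension $(U_1,U_2)$. Since an extension is in particular a dilation, $(V_1,V_2)$ admits a $\Gamma$-distinguished $\Gamma$-unitary dilation. Theorem \ref{Main} then produces a $\Gamma$-distinguished polynomial $p$ such that $Z(p)\cap\Gamma$ is a complete spectral set for $(V_1,V_2)$, giving (4). Combined with the immediate $(4)\Rightarrow(3)$ and the already-established $(3)\Rightarrow(1)$, this completes the chain $(1)\Rightarrow(4)\Rightarrow(3)\Rightarrow(1)$ and, together with $(1)\Leftrightarrow(2)$, yields the equivalence of all four conditions.

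I do not expect any genuine obstacle here, since every step is a direct appeal to a previously established result. The only conceptual point worth flagging is the distinction between ``extension'' (as in (2)) and ``dilation'' (as in Theorem \ref{Main}); since every extension is a dilation, the passage from (2) to (4) via Theorem \ref{Main} goes through without modification, and from (4) one returns to (2) by first passing to (1) through (3) and then applying Proposition \ref{prop8.1}.
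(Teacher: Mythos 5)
Your proposal is correct and follows essentially the same route as the paper, which derives the theorem as an immediate corollary of Proposition \ref{prop8.1} and Theorem \ref{Main} (with the $(1)\Leftrightarrow(3)$ equivalence supplied by the preceding proposition on $\Gamma$-isometries and the spectral-radius/subnormality argument). The only detail you add explicitly --- that an extension is in particular a dilation, so Theorem \ref{Main} applies --- is exactly the observation the paper leaves implicit.
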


	We now present a few examples of $\Gamma$-distinguished $\Gamma$-contractions that admit a $\Gamma$-distinguished $\Gamma$-isometric dilation. Recall from the literature (e.g. see \cite{Agler-Young-V}) that the \textit{royal variety} in the symmetrized bidisc is defined to be the set 
	$
	R:=\left\{(2z, z^2) \ : \ z \in \D \right\},
	$
	which is a distinguished variety in the symmetrized bidisc.
	
	\begin{eg}\label{eg811}
		Let $f(z_1, z_2)=z_1^2-4z_2$ which is a $\Gamma$-distinguished polynomial. Then 
		\[
		Z(f) \cap \Gamma=\{(z_1, z_2) \in \Gamma \ : \ z_1^2=4z_2 \}=\{(2z, z^2) \ : \ z \in \DC\}=\overline{R}.
		\]	
		Let $(S, P)$ be a $\Gamma$-contraction such that $f(S, P)=0$ and so, $P=S^2\slash 4$. Take any matricial polynomial $[f_{ij}]_{1 \leq i, j \leq n}$ and define $g_{ij}(z)=f_{ij}(2z, z^2)$ for every $i, j$. Then		
		\begin{equation*}
			\begin{split}
				\|[f_{ij}(S, P)]_{i, j}\| 
				=    \|[f_{ij}(S, S^2\slash 4)]_{i, j}\|
				& =    \|[g_{ij}(S\slash 2)]_{i, j}\| 
				\\ & \leq   \max_{z \in \DC}  \|[g_{ij}(z)]_{i, j}\| \\
				& \leq   \max \{ \|[f_{ij}(2z, z^2)]_{i, j}\| \ : \ z \in \DC \}\\
				& \leq   \max \{ \|[f_{ij}(z_1, z_2)]_{i, j}\| \ : \ (z_1, z_2) \in Z(f) \cap \Gamma \}.
\end{split}
\end{equation*}
Thus, $Z(f) \cap \Gamma$ is a complete spectral set for $(S, P)$. It follows from Theorem \ref{Main}
		that $(S, P)$ dilates to a $\Gamma$-distinguished $\Gamma$-isometry. \qed 
		
	\end{eg}

The first named author of this article and Shalit gave an explicit description of a distinguished variety in the symmetrized bidisc in \cite{PalShalit1}. We recall that result for our purpose.
\begin{thm} [\cite{PalShalit1}, Theorem 3.5]\label{PalShalit}
		Let $A$ be a square matrix with $\omega(A) <1,$ and let $W$ be the subset of $\mathbb{G}_2$ defined by 
		\[
		W=\{(z_1, z_2)\in \mathbb{G}_2 \ :\ det(A+ z_2A^*-z_1I)=0\}.
		\]
		Then $W$ is a distinguished variety. Conversely, every distinguished variety in $\mathbb{G}_2$ has the form $\{(z_1, z_2)\in \mathbb{G}_2 \ :\ det(A+ z_2A^*-z_1I)=0\},$ for some matrix $A$ with $\omega(A) \leq 1$.
	\end{thm}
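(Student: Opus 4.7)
The plan is to split the theorem into its forward and converse directions. For the forward direction, fix the polynomial $p(z_1,z_2)=\det(A+z_2A^*-z_1I)$ and let $W=Z(p)\cap\mathbb{G}_2$; we must check both that $W$ is non-empty and that $Z(p)\cap\partial\Gamma=Z(p)\cap b\Gamma$. Non-emptiness is immediate: take any eigenvalue $\lambda_0$ of $A$, so $|\lambda_0|\le r(A)\le\omega(A)<1$, giving $(\lambda_0,0)=\pi(\lambda_0,0)\in\mathbb{G}_2$ with $p(\lambda_0,0)=\det(A-\lambda_0 I)=0$.

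The distinguished-boundary property is the heart of the forward direction. Take $(s,t)\in Z(p)\cap\partial\Gamma$; since $(s,t)\in\Gamma\setminus\mathbb{G}_2$, Theorem \ref{thm205} forces $|t|=1$, so write $t=e^{i\phi}$. The key algebraic step is the factorization
\[
A+tA^*\;=\;e^{i\phi/2}\bigl(e^{-i\phi/2}A+(e^{-i\phi/2}A)^{*}\bigr)\;=\;e^{i\phi/2}B,
\]
where $B$ is Hermitian. Since numerical radius is invariant under scalar unitaries, $\omega(e^{-i\phi/2}A)=\omega(A)<1$, so the numerical range of $B$ lies in $[-2\omega(A),\,2\omega(A)]\subset(-2,2)$. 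Consequently every eigenvalue of $A+tA^*$ has the form $e^{i\phi/2}\mu$ with $\mu\in\mathbb{R}$ and $|\mu|<2$; taking $s$ to be our eigenvalue, $s=e^{i\phi/2}\mu$ satisfies $\bar{s}t=e^{-i\phi/2}\mu\cdot e^{i\phi}=s$ and $|s|<2$, and Theorem \ref{thm:0206} places $(s,t)\in b\Gamma$.

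For the converse, the plan is to manufacture from $W$ a pure $\Gamma$-isometry $(S,P)$ and extract $A$ as the fundamental operator of $(S^{*},P^{*})$. The pullback $V=\pi^{-1}(W)\cap\mathbb{D}^{2}$ is a symmetric distinguished variety in the bidisc, so one can build a natural $H^{2}$-type Hilbert space over $W$ (or equivalently a symmetric such space over $V$) and let $(S,P)$ be multiplication by the coordinate functions. The hypothesis that $W$ exits $\mathbb{G}_2$ only through $b\Gamma=\pi(\mathbb{T}^2)$ guarantees that $\Gamma$ is a spectral set for $(S,P)$ and that $P$ is a pure isometry, so Theorem \ref{thm:modelpure} produces a unitary equivalence $(S,P)\cong(T_{A^{*}+Az},T_z)$ on $H^{2}(\mathcal{D}_{P^{*}})$ with $A=F_{*}^{*}$ the adjoint of the fundamental operator of $(S^{*},P^{*})$; automatically $\omega(A)\le 1$. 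The polynomial $\det(A+z_2A^{*}-z_1I)$ annihilates this model and hence $(S,P)$, giving $W\subseteq Z(p)\cap\mathbb{G}_{2}$. The main obstacle is twofold: first, one must justify that $\mathcal{D}_{P^{*}}$ is finite-dimensional so that $A$ is a matrix, which is forced by the algebraicity of $W$; second, one must upgrade the inclusion to equality, which requires a codimension/irreducibility argument matching the degree of $p$ with the degree of the defining polynomial of $W$.
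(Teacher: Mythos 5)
This theorem is imported verbatim from \cite{PalShalit1}; the present paper gives no proof of it, so your argument can only be judged on its own terms and against the source. The forward direction contains a genuine gap at its first step: you claim that $(s,t)\in\Gamma\setminus\mathbb{G}_2$ together with Theorem \ref{thm205} forces $|t|=1$. That implication is false: $\partial\Gamma$ contains many points with $|t|<1$, for instance $(1,0)=\pi(1,0)$, and in general every $(s,t)$ with $|s-\overline{s}t|=1-|t|^{2}$ and $|t|<1$. That $Z(p)\cap\partial\Gamma$ avoids all such points is precisely the content of the forward direction in the regime $|t|<1$, and it is where the hypothesis $\omega(A)<1$ has to be used a second time. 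The repair is short: if $\det(A+tA^{*}-sI)=0$, choose a unit eigenvector $x$ and put $a=\langle Ax,x\rangle$, so that $|a|\le\omega(A)<1$ and $s=a+\overline{a}\,t$; then $s-\overline{s}t=a(1-|t|^{2})$, hence $|s-\overline{s}t|=|a|(1-|t|^{2})<1-|t|^{2}$ whenever $|t|<1$, which by the strict analogue of Theorem \ref{thm205}(b) (the inequality characterizing the open set $\mathbb{G}_2$) places $(s,t)$ in the interior rather than on $\partial\Gamma$. The same identity at $|t|=1$ gives $s=\overline{s}t$ and $|s|\le 2|a|<2$ in one line, so it subsumes your Hermitian-factorization step, which is itself correct; your non-emptiness argument is also fine.

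The converse direction is a plan rather than a proof. The route you describe --- pull $W$ back to a symmetric distinguished variety in $\mathbb{D}^{2}$, build an $H^{2}$-type space over it, realize the coordinate multiplications as a pure $\Gamma$-isometry, and read off $A$ from Theorem \ref{thm:modelpure} --- is indeed the strategy of \cite{PalShalit1}, but every load-bearing step is deferred: the construction of the Hilbert space and the verification that $(S,P)$ is a pure $\Gamma$-isometry having $\Gamma$ as a spectral set; the finiteness of $\dim\mathcal{D}_{P^{*}}$, which in the source comes from the local structure of the one-dimensional variety near $b\Gamma$ (a branched-cover/cyclicity argument in the spirit of Lemma \ref{lem3.6}) and not merely from ``algebraicity''; and the passage from the inclusion $W\subseteq Z(p)\cap\mathbb{G}_2$ to equality, which needs the minimal annihilating polynomial (cf.\ Theorem \ref{thm:0312}) together with a dimension count. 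Since you explicitly flag these as unresolved obstacles, the converse cannot be credited as proved in its present form.
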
   
	\begin{eg}
		For $a \in \D$, define $f(z_1, z_2):=z_1-\overline{a}z_2-a$. It follows from Theorem \ref{PalShalit} that $f$ is a $\Gamma$-distinguished polynomial.  Let $(S, P)$ be a $\Gamma$-contraction such that $f(S, P)=0$ and so, $S=\overline{a}P+aI$. Observe that if $z \in \DC$, then $(s, p)=(\overline{a}z+a, z) \in \Gamma$ as $|s| \leq 2$ and 
		\[
		|s-\overline{s}p|=|\overline{a}z+a-(a\overline{z}+\overline{a})z|=|a(1-|z|^2)|< 1-|z|^2=  1-|p|^2.
		\]
		Take any matricial polynomial $[f_{ij}]_{1 \leq i, j \leq n}$ and define $h_{ij}(z)=f_{ij}(\overline{a}z+a, z)$ for every $i, j$. Then
		
		\begin{equation*}
			\begin{split}
				\|[f_{ij}(S, P)]_{i, j}\| 
				=    \|[f_{ij}(\overline{a}P+aI, P)]_{i, j}\|
				& =    \|[h_{ij}(P)]_{i, j}\| 
				\\ & \leq   \max_{z \in \DC}  \|[h_{ij}(z)]_{i, j}\| \\
				& \leq   \max \{ \|[f_{ij}(\overline{a}z+a, z)]_{i, j}\| \ : \ z \in \DC \}\\
				& \leq   \max \{ \|[f_{ij}(z_1, z_2)]_{i, j}\| \ : \ (z_1, z_2) \in Z(f) \cap \Gamma \}.
			\end{split}
		\end{equation*}
		Therefore, $Z(f) \cap \Gamma$ is a complete spectral set for $(S, P)$. By Theorem \ref{Main}, 
		$(S, P)$ dilates to a $\Gamma$-distinguished $\Gamma$-isometry. 
		\qed
	\end{eg}
	
	We now move to prepare for giving a proof to Theorem \ref{thm814}, the main result of this Section and an analogue of Theorem \ref{main_toral}. Needless to mention, we shall develop similar preparatory results as in the previous section. So, let us begin with the following proposition.	
	
	\begin{prop}\label{prop1012}
		Let $(S, P)$ be a $\Gamma$-distinguished $\Gamma$-contraction acting on a Hilbert space $\HS$. If $(S, P)$ has a $\Gamma$-distinguished $\Gamma$-isometric dilation, then it has a minimal $\Gamma$-distinguished $\Gamma$-isometric dilation.
	\end{prop}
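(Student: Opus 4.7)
The plan is to imitate the argument of Proposition \ref{prop9.08} in the symmetrized bidisc setting. Starting with a $\Gamma$-distinguished $\Gamma$-isometric dilation $(\widetilde T, \widetilde V)$ of $(S,P)$ on some Hilbert space $\widetilde{\mathcal K} \supseteq \HS$, annihilated by a $\Gamma$-distinguished polynomial $q \in \mathbb{C}[z_1, z_2]$, I would define
\[
\mathcal K := \overline{\text{span}}\{\widetilde T^{\,i}\, \widetilde V^{\,j} h \;:\; h \in \HS,\ i,j \in \mathbb{N} \cup \{0\}\}
\]
and take $(T, V) := (\widetilde T|_{\mathcal K}, \widetilde V|_{\mathcal K})$ as the candidate minimal dilation.

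Then I would verify four things: (a) $\mathcal K$ is joint-invariant under $\widetilde T, \widetilde V$, which is immediate from the definition; (b) $(T,V)$ is a $\Gamma$-isometry on $\mathcal K$; (c) $q(T,V) = 0$, so $(T, V)$ is $\Gamma$-distinguished; (d) $(T, V)$ dilates $(S, P)$ and this dilation is minimal. Step (c) reduces to $q(T,V) = q(\widetilde T, \widetilde V)|_{\mathcal K} = 0$ because $\mathcal K$ is jointly invariant under $\widetilde T, \widetilde V$ and $q(\widetilde T, \widetilde V)=0$. For (d), given any polynomial $f$ and $h \in \HS \subseteq \mathcal K$, joint invariance yields $f(T,V)h = f(\widetilde T, \widetilde V)h$, hence $P_\HS f(T,V)|_\HS = P_\HS f(\widetilde T, \widetilde V)|_\HS = f(S,P)$; minimality is built into the definition of $\mathcal K$, which equals $\overline{\text{span}}\{T^i V^j h : h \in \HS,\ i, j \in \mathbb{N}\cup \{0\}\}$.

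The one step requiring care is (b): the class of $\Gamma$-isometries is defined via a $\Gamma$-unitary extension rather than by internal algebraic identities on the ambient space, so the restriction of a $\Gamma$-isometry to a jointly invariant subspace is not visibly a $\Gamma$-isometry at face value. The remedy is transitivity of the extension: if $(N_1, N_2)$ is any $\Gamma$-unitary on a Hilbert space $\widehat{\mathcal K} \supseteq \widetilde{\mathcal K}$ with $\widetilde T = N_1|_{\widetilde{\mathcal K}}$ and $\widetilde V = N_2|_{\widetilde{\mathcal K}}$, then $\mathcal K \subseteq \widehat{\mathcal K}$ is jointly invariant under $N_1, N_2$, and $(T, V) = (N_1|_\mathcal K, N_2|_\mathcal K)$ arises as the restriction of a $\Gamma$-unitary to a jointly invariant subspace, and is therefore a $\Gamma$-isometry by definition. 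This is the only genuine conceptual hurdle, and it is resolved by unpacking the definition of a $\Gamma$-isometry.
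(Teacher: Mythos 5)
Your proposal is correct and follows essentially the same route as the paper: restrict $(\widetilde T,\widetilde V)$ to the closed span $\mathcal K$ of $\{\widetilde T^{\,i}\widetilde V^{\,j}h\}$, note that the annihilating polynomial passes to the restriction, and read off minimality from the definition of $\mathcal K$. Your extra care in step (b) — justifying that the restriction to a joint invariant subspace is again a $\Gamma$-isometry by going back to the $\Gamma$-unitary extension — fills in a detail the paper states without comment, and is a valid way to do it.
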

	
	\begin{proof}
		Let $(\widetilde{T}, \widetilde{V})$ be a $\Gamma$-distinguished $\Gamma$-isometry acting on a Hilbert space $\widetilde{\mathcal{K}} \supseteq \HS$ such that $(\widetilde{T}, \widetilde{V})$ dilates $(S, P)$. Let $q$ be a $\Gamma$-distinguished polynomial that annihilates $(\widetilde{T}, \widetilde{V})$. Let us consider
		\[
		\mathcal{K}= \overline{\text{span}}\{\widetilde{T}^i\widetilde{V}^jh \ : \ h \in \mathcal{H} \ \text{and} \ i, j \in \mathbb{N} \cup \{0\} \}.
		\] 
		Evidently, $\mathcal{K}$ is invariant under $\widetilde{V}_1^{i}$ and  $\widetilde{V}_2^j$ for any non-negative integers $i, j$. We denote by
		$
		(T, V)$ the pair $(\widetilde{T}|_\mathcal{K}, \widetilde{V}|_\mathcal{K})
		$
		and so, $(T, V)$ is a $\Gamma$-isometry and $q(T, V)=0$. Moreover,
		\[
		\mathcal{K}= \overline{\text{span}}\{T^iV^jh \ : \ h \in \mathcal{H} \ \text{and} \ i, j \in \mathbb{N} \cup \{0\} \}.
		\] 
		Thus 
		$
		P_\mathcal{H}(T^iV^j)h=S^iP^jh	
		$
		for any $h \in \HS$ and $i, j \in \N \cup \{0\}$. Therefore, $(T, V)$ on $\mathcal{K}$ is a minimal isometric dilation of $(S, P)$ with $q(T, V)=0$ and the proof is complete.
	\end{proof}
	\begin{prop}\label{prop1013}
		Let $(S, P)$ be a $\Gamma$-contraction acting on a Hilbert space $\mathcal{K}$ and let $(T, V)$ be its minimal $\Gamma$-isometric dilation acting on a Hilbert space $\mathcal{K}$. Then $(T^*, V^*)$ is a $\Gamma$-co-isometric extension of $(S^*, P^*)$. Conversely, if $(T^*, V^*)$ is a $\Gamma$-co-isometric extension of $(S^*, P^*)$ then $(T,V)$ is a $\Gamma$-isometric dilation of $(S,P)$.
	\end{prop}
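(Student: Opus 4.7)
The proof proceeds in exact parallel with that of Proposition~\ref{prop9.09}, with $\Gamma$-isometries in place of commuting isometries. The one additional ingredient needed is Proposition~\ref{basicprop:01}: since $\Gamma$ is polynomially convex, the rational-dilation relation $f(S,P) = P_\HS f(T,V)|_\HS$ for all $f \in \text{Rat}(\Gamma)$ is equivalent to its polynomial version, and the minimality of the $\Gamma$-isometric dilation takes the concrete form
\[
\mathcal{K} = \overline{\mbox{span}}\{T^i V^j h : h \in \HS,\ i, j \in \N \cup \{0\}\}.
\]

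For the forward direction, applied to a generator $T^i V^j h$ with $h \in \HS$, the polynomial dilation formula gives $P_\HS T^i V^j h = S^i P^j h$. Hence
\[
S P_\HS (T^i V^j h) = S^{i+1} P^j h = P_\HS T\,(T^i V^j h),
\]
and the analogous identity $P P_\HS = P_\HS V$ holds on generators. By density and continuity these intertwining relations extend to all of $\mathcal{K}$, and passing to adjoints yields $T^*|_\HS = S^*$ and $V^*|_\HS = P^*$, which is precisely the assertion that $(T^*, V^*)$ is a $\Gamma$-co-isometric extension of $(S^*, P^*)$.

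For the converse, a block-matrix computation suffices. If $\HS$ is invariant under $T^*$ and $V^*$, then $\HS^\perp$ is jointly invariant under $T$ and $V$, so with respect to $\mathcal{K} = \HS \oplus \HS^\perp$ both $T$ and $V$ are lower-triangular with $(1,1)$-entries $S$ and $P$, respectively. Consequently the $(1,1)$-entry of $p(T,V)$ is $p(S,P)$ for every polynomial $p$, so that $P_\HS p(T,V)|_\HS = p(S,P)$. Polynomial convexity of $\Gamma$ then upgrades this polynomial dilation identity to the full rational dilation relation, establishing that $(T,V)$ is a $\Gamma$-isometric dilation of $(S,P)$. There is no substantive obstacle: the argument essentially transcribes that of Proposition~\ref{prop9.09}, with the single point of care being the translation between polynomial and rational dilation supplied by Proposition~\ref{basicprop:01}.
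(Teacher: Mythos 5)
Your proposal is correct and follows essentially the same route as the paper: minimality gives $\mathcal{K}=\overline{\mbox{span}}\{T^iV^jh\}$, the intertwining relations $SP_\HS=P_\HS T$ and $PP_\HS=P_\HS V$ are verified on generators and extended by continuity, and adjoints yield the co-isometric extension, while the converse (which the paper dismisses as obvious) is handled by the standard lower-triangular block-matrix observation together with Proposition~\ref{basicprop:01}. No gaps.
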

	
	\begin{proof}
		It follows from the definition of minimality that $\mathcal{K}= \overline{\text{span}}\{T^iV^jh \ : \ h \in \mathcal{H} \ \text{and} \ i, j \in \mathbb{N} \cup \{0\} \}$. We show that $SP_\HS=P_\HS T$ and $PP_\HS=P_\HS V$. For any $h \in \HS$, we have
		$
		SP_\mathcal{H}(T^{i}V^{j}h)=S(S^{i}P^{j}h)=S^{i+1}P^{j}h=P_\mathcal{H}(T^{i+1}V^{j}h)=P_\mathcal{H}T(T^{i}V^{j}h)$.	Using continuity argument, it follows that $SP_\mathcal{H}=P_\mathcal{H}T$ and similarly, $PP_\mathcal{H}=P_\mathcal{H}V$. Also for $h \in \mathcal{H}$ and $k \in \mathcal{K}$, we have
		\[
		\langle S^*h, k \rangle =\langle P_\mathcal{H}S^*h, k \rangle =\langle S^*h, P_\mathcal{H}k \rangle =\langle h, SP_\mathcal{H}k \rangle =\langle h, P_\mathcal{H}Tk \rangle =\langle T^*h, k \rangle. 
		\]
		Hence, $S^*=T^*|_\mathcal{H}$ and similarly $P^*=V^*|_\mathcal{H}$. The converse part is obvious.		 
	\end{proof}
	
	\begin{lem}\label{lem1014}
		Let $\mathcal{H}_1$ and $\HS_2$ be Hilbert spaces. Let $V_1=\begin{bmatrix}
			T_1 & 0 \\
			C_1 & D_1
			
		\end{bmatrix}$ and $V_2=\begin{bmatrix}
			T_2 & 0 \\
			C_2 & D_2
		\end{bmatrix}$ be commuting operators acting on $\mathcal{H}_1 \oplus \mathcal{H}_2$. If $f(T_1, T_2)=0=g(D_1, D_2)$ for $f, g \in \C[z_1, z_2]$, then $f(V_1, V_2)g(V_1, V_2)=0$. Moreover, $(V_1, V_2)$ is a $\Gamma$-distinguished $\Gamma$-contraction if and only if both $(T_1, T_2)$ and $(D_1, D_2)$ are $\Gamma$-distinguished $\Gamma$-contractions. 
	\end{lem}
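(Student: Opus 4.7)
I would follow the two-step strategy of the toral analog, Lemma \ref{lem910}. First I would verify, by induction on the total degree of $p$ (or directly from the block-triangular structure of $V_1, V_2$), that for every polynomial $p \in \C[z_1, z_2]$,
\[
p(V_1, V_2) = \begin{bmatrix} p(T_1, T_2) & 0 \\ * & p(D_1, D_2) \end{bmatrix}
\]
with respect to $\HS_1 \oplus \HS_2$, exactly as in equation (\ref{eqn_p(V)}). From this identity the first assertion is immediate: if $f(T_1, T_2) = 0 = g(D_1, D_2)$, then $f(V_1, V_2)$ has zero first row of blocks while $g(V_1, V_2)$ has zero second row of blocks, so
\[
f(V_1, V_2)\, g(V_1, V_2) = \begin{bmatrix} 0 & 0 \\ * & f(D_1, D_2) \end{bmatrix} \begin{bmatrix} g(T_1, T_2) & 0 \\ * & 0 \end{bmatrix} = 0,
\]
using the commutativity of $V_1, V_2$ to identify this product with $(fg)(V_1, V_2)$.

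For the forward direction of the biconditional, suppose $(V_1, V_2)$ is a $\Gamma$-distinguished $\Gamma$-contraction and $q$ is a $\Gamma$-distinguished polynomial annihilating it. The block formula above immediately yields $q(T_1, T_2) = 0$ and $q(D_1, D_2) = 0$. Moreover, $\HS_2$ is jointly invariant under $V_1, V_2$ with $D_i = V_i|_{\HS_2}$, so $(D_1, D_2)$ inherits both $\sigma_T(D_1, D_2) \subseteq \sigma_T(V_1, V_2) \subseteq \Gamma$ and the von Neumann inequality over $\Gamma$, making it a $\Gamma$-contraction. Dually, $\HS_1$ is jointly invariant under $V_1^*, V_2^*$ with $T_i^* = V_i^*|_{\HS_1}$, so $(T_1^*, T_2^*)$ is a $\Gamma$-contraction; since $\Gamma$ is invariant under complex conjugation (being $\pi(\DC^2)$), the same holds for $(T_1, T_2)$.

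For the converse, let $f, g$ be $\Gamma$-distinguished polynomials annihilating $(T_1, T_2)$ and $(D_1, D_2)$ respectively. The first part gives $(fg)(V_1, V_2) = f(V_1, V_2)\,g(V_1, V_2) = 0$. Since $Z(fg) = Z(f) \cup Z(g)$, we have $Z(fg) \cap \G \neq \emptyset$, and
\[
Z(fg) \cap \partial \Gamma = \bigl(Z(f) \cap \partial \Gamma\bigr) \cup \bigl(Z(g) \cap \partial \Gamma\bigr) \subseteq b\Gamma,
\]
so $fg$ is a $\Gamma$-distinguished polynomial, whence $(V_1, V_2)$ is $\Gamma$-distinguished. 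The one subtle point — exactly parallel to Lemma \ref{lem910}, where $V_1, V_2$ are declared to be commuting contractions — is that the $\Gamma$-contraction property of $(V_1, V_2)$ in the converse direction must be read as part of the standing hypothesis on the block pair, since it is \emph{not} automatic from $(T_1, T_2)$ and $(D_1, D_2)$ being $\Gamma$-contractions together with an arbitrary commuting block-triangular lift. With that hypothesis in place, the block formula and the zero-set calculation above complete the proof.
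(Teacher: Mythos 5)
Your approach is exactly the paper's: the paper proves the toral version (Lemma \ref{lem910}) by the block-triangular formula (\ref{eqn_p(V)}) and then states that the proof of Lemma \ref{lem1014} "is similar" and skips it, so your reconstruction matches in substance. One computational slip: the displayed product
\[
\begin{bmatrix} 0 & 0 \\ * & f(D_1,D_2)\end{bmatrix}\begin{bmatrix} g(T_1,T_2) & 0 \\ * & 0\end{bmatrix}
\]
is \emph{not} zero by block multiplication alone (its lower-left block is $*\,g(T_1,T_2)+f(D_1,D_2)\,*$); you must first use commutativity to rewrite $f(V_1,V_2)g(V_1,V_2)=g(V_1,V_2)f(V_1,V_2)$ and multiply in \emph{that} order, where the lower row $(*,0)$ meets the left column $(0,*)$ and everything visibly vanishes --- this is precisely how the paper's proof of Lemma \ref{lem910} proceeds. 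Your commutativity remark is aimed at the wrong target ("identifying the product with $(fg)(V_1,V_2)$" does not by itself give zero, since $(fg)(V_1,V_2)$ still has an unknown lower-left block). Your observation that the $\Gamma$-contraction property of $(V_1,V_2)$ must be read as a standing hypothesis in the converse direction is a fair and correct reading of the statement, consistent with how the lemma is actually applied in the proof of Theorem \ref{thm814}.
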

	
	\begin{proof}
The proof is similar to that of Lemma \ref{lem910} and thus we skip it.
\end{proof}	
	
Having in hand all preparatory results, we now give a proof to the main result of this Section.

\medskip	
	
\noindent \textit{\textbf{Proof of Theorem \ref{thm814}.}}			
$(1)\Rightarrow (2)$. Suppose $(S, P)$ dilates to a $\Gamma$-distinguished $\Gamma$-isometry $(\widetilde{T}, \widetilde{V})$ acting on a space $\widetilde{K} \supseteq \HS$. Then there is a $\Gamma$-distinguished polynomial $p \in \C[z_1, z_2]$ such that $p(\widetilde{T}, \widetilde{V})=0$. Let us denote by	$
		(T, V)$ the pair $(\widetilde{T}|_\mathcal{K}, \widetilde{V}|_\mathcal{K})$, where $
\mathcal{K}= \overline{\text{span}}\{\widetilde{T}^i\widetilde{V}^jh \ : \ h \in \mathcal{H} \ \text{and} \ i, j \in \mathbb{N} \cup \{0\} \}$, the minimal isometric dilation space of $(S, P)$ with respect to the dilation $(\widetilde{T}, \widetilde{V})$. It follows from Proposition \ref{prop1012} that $(T, V)$ on $\mathcal{K}$ is a $\Gamma$-isometric dilation of $(S, P)$ and $p(T, V)=0$. By Proposition \ref{prop1013}, $(T^*, V^*)$ is a $\Gamma$-co-isometric extension of $(S^*, P^*)$. Let $\HS^\perp=\mathcal{K} \ominus \HS$. Then there exist  $C_1, C_2 \in \mathcal{B}(\HS, \HS^\perp)$ and $D_1, D_2 \in \mathcal{B}(\HS^\perp)$ such that
		\[
		T=\begin{bmatrix}
			S & 0 \\
			C_1 & D_1
		\end{bmatrix} \quad \text{and} \quad 	V=\begin{bmatrix}
			P & 0 \\
			C_2 & D_2
		\end{bmatrix}
		\]
		with respect to the orthogonal decomposition $\mathcal{K}=\HS \oplus \HS^{\perp}$. A straightforward computation shows that $TV=VT$ if and only if
		\begin{equation*}
			\begin{split}
				\begin{bmatrix}
					S & 0 \\
					C_1 & D_1
				\end{bmatrix}\begin{bmatrix}
					P & 0 \\
					C_2 & D_2
				\end{bmatrix}=\begin{bmatrix}
					P & 0 \\
					C_2 & D_2
				\end{bmatrix}\begin{bmatrix}
					S & 0 \\
					C_1 & D_1
				\end{bmatrix} 
				& \iff \begin{bmatrix}
					SP & 0 \\
					C_1P+D_1C_2 & D_1D_2
				\end{bmatrix}=\begin{bmatrix}
					PS & 0 \\
					C_2S+D_2C_1 & D_2D_1
				\end{bmatrix}.
			\end{split}
		\end{equation*} 
		Similar computation implies that $T=T^*V$ if and only if 
		\begin{equation*}
			\begin{split}
				\begin{bmatrix}
					S & 0 \\
					C_1 & D_1
				\end{bmatrix}= \begin{bmatrix}
					S^* & C_1^* \\
					0 & D_1^*
				\end{bmatrix}\begin{bmatrix}
					P & 0 \\
					C_2 & D_2
				\end{bmatrix} = \begin{bmatrix}
					S^*P+C_1^*C_2 & C_1^*D_2 \\
					D_1^*C_2 & D_1^*D_2
				\end{bmatrix}.
			\end{split}
		\end{equation*}
		Again by routine calculations, we have that $V^*V=I$ if and only if
		\begin{equation*}
			\begin{split}
				\begin{bmatrix}
					P^* & C_2^* \\
					0 & D_2^*
				\end{bmatrix}\begin{bmatrix}
					P & 0 \\
					C_2 & D_2
				\end{bmatrix}=\begin{bmatrix}
					P^*P+C_2^*C_2 & C_2^*D_2 \\
					D_2^*C_2 & D_2^*D_2
				\end{bmatrix}=\begin{bmatrix}
					I & 0 \\
					0 & I
				\end{bmatrix}.
			\end{split}
		\end{equation*}
Theorem 2.6 in \cite{AglerII16} states that a commuting pair of operators $(T,V)$ is a $\Gamma$-isometry if and only if $T=T^*V$, $\|T\|\leq 2$ and $V$ is an isometry. Thus, combining the equality of the block matrices in all three cases we can say that $(T,V)$ is a $\Gamma$-isometry if and only if $\|T\| \leq 2$ and the following hold: 
		
		\begin{minipage}[t]{0.3\textwidth}
			\smallskip
			\begin{equation*}
				\begin{split}
					& 1. \ \ C_1P+D_1C_2=C_2S+D_2C_1, \\
					& 4.  \ \ C_1^*D_2=0 , \\
					& 7. \ \ C_2^*C_2=D_P^2,\\
				\end{split}
			\end{equation*}
		\end{minipage}
		\begin{minipage}[t]{0.3\textwidth}
			\smallskip
			\begin{equation*}
				\begin{split}
					& 2. \ \ D_1D_2=D_2D_1, \\
					& 5. \ \ C_1=D_1^*C_2, \\
					& 7.  \ \  C_2^*D_2=0, \\
				\end{split}
			\end{equation*}
		\end{minipage}
		\begin{minipage}[t]{0.3\textwidth}
			\smallskip
			\begin{equation}\label{eqn1003}
				\begin{split}
					& 3. \ \ S-S^*P=C_1^*C_2, \\
					& 6. \ \ D_1=D_1^*D_2, \\
					& 9. \ \ D_2^*D_2=I.
				\end{split}
			\end{equation}
		\end{minipage}
		
		Since $(T, V)$ is a $\Gamma$-isometry and $(D_1, D_2)=(T|_\mathcal{\HS^\perp}, V|_{\HS^\perp})$, it follows that $(D_1, D_2)$ is a $\Gamma$-isometry too. The operator equations in (\ref{eqn1002}) follow from (\ref{eqn1003}).
		
		\smallskip
		
\noindent $(2)\Rightarrow (1)$.	Let us assume that there is a Hilbert space $\mathcal{K}$ containing $\HS$, a $\Gamma$-distinguished $\Gamma$-isometry $(D_1, D_2)$ on $\HS^{\perp}=\mathcal{K}\ominus \HS$ and $C_1, C_2 \in \mathcal{B}(\HS, \HS^\perp)$ such that the operator identities in (\ref{eqn1002}) hold. Set 
		\[
		T=\begin{bmatrix}
			S & 0 \\
			C_1 & D_1
		\end{bmatrix} \quad \text{and} \quad 	V=\begin{bmatrix}
			P & 0 \\
			C_2 & D_2
		\end{bmatrix}
		\]
		on $\mathcal{K}=\HS \oplus \HS^\perp$.  We have by Lemma 1 in \cite{Hong} that $\sigma(T) \subseteq \sigma(S) \cup \sigma(D_1)$ and thus $r(T) \leq \max\{r(S), r(D_1)\}$. Since $(S, P)$ and $(D_1, D_2)$ are $\Gamma$-contractions, we must have $r(S) \leq 2$ and $r(D_1) \leq 2$ by Theorem 4.4 in \cite{Pal8}. Thus $r(T) \leq 2$. Theorem 2.14 in \cite{Pal8} states that $(T, V)$ is a $\Gamma$-isometry if and only if $r(T) \leq 2$ and the following hold: $TV=VT$, $T=T^*V$ and $V^*V=I$. With the given hypothesis that $(D_1, D_2)$ is a $\Gamma$-isometry and (\ref{eqn1002}) holds, we have that (\ref{eqn1003}) holds if and only if $C_1^*D_2=0$. In other words, $(T, V)$ is a $\Gamma$-isometry if and only if $C_1^*D_2=0$. By part-$(v)$ of (\ref{eqn1002}) and part-$(iii)$ of (\ref{eqn1002}), it follows that $C_1^*D_2 = C_2^*D_1D_2 =  C_2^*D_2D_1  = 0 $. Consequently, $(T, V)$ is a $\Gamma$-isometry. It is evident that $S^*=T^*|_\HS, P^*=V^*|_\HS$ and so, $(T, V)$ is a $\Gamma$-isometric dilation of $(S, P)$. By Lemma \ref{lem1014}, $(T, V)$ is $\Gamma$-distinguished.
		
		\medskip
		
		\noindent $(1) \Leftrightarrow (3)$. This part follows from Theorems \ref{Main} \& \ref{G_isometries}.		
		
		\medskip	
		
		 Moreover, if $f$ and $g$ are $\Gamma$-distinguished polynomials that annihilate $(S, P)$ and $(D_1, D_2)$ respectively, then $fg$ annihilates $(T, V)$. So, looking at the argument in the proof of $(2)\Rightarrow (1)$ it follows from Theorem \ref{Main} that $Z(fg) \cap \Gamma$ is a complete spectral set for $(S, P)$. This completes the proof.\qed
	
\medskip	
	
Now we investigate a few special cases when a $\Gamma$-distinguished $\Gamma$-contraction $(S,P)$ dilates to a $\Gamma$-distinguished $\Gamma$-unitary, especially when $\mathcal D_P$ or $\mathcal D_{P^*}$ is finite dimensional. In this connection, let us state a result due to Pal and Shalit \cite{PalShalit1} which will be useful.
		
	\begin{thm}[\cite{PalShalit1}, Theorem 4.5]\label{PalShalit4.5}
		Let $(S,P)$ be a $\Gamma$-contraction on a Hilbert space $\mathcal{H}$ such that $(S^*, P^*)$ is pure, and suppose that dim $\mathcal{D}_{P}< \infty$. Let $F$ be the fundamental operator of $(S,P)$ and let
		$
		\Lambda=\{(z_1, z_2) \in \Gamma: det(F^*+z_2F-z_1I)=0\}$. Then for every matrix-valued polynomial $f$,
		\[
		\|f(S,P)\| \leq \max\{|f(z_1, z_2)|: (z_1, z_2) \in \Lambda \cap b\Gamma\}.
		\]
		Moreover, if $\omega(F)<1$, then $\Lambda \cap \mathbb{G}_2$ is a distinguished variety in $\mathbb{G}_2$.
	\end{thm}
	
	\begin{lem}\label{5.3} 
		Let $(S,P)$ be a commuting pair of operators acting on a Hilbert space $\mathcal{H}$. Then $(S,P)$ is distinguished or $\Gamma$-distinguished if and only if $(S^*,P^*)$ is distinguished or $\Gamma$-distinguished respectively.
	\end{lem}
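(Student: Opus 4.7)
The plan is to exhibit an explicit correspondence between annihilating polynomials of $(S,P)$ and of $(S^*,P^*)$ that preserves both the distinguished and the $\Gamma$-distinguished properties. Given any $q(z_1,z_2)=\sum a_{ij}z_1^iz_2^j\in \C[z_1,z_2]$, the natural candidate is the conjugate polynomial
\[
\tilde q(z_1,z_2):=\sum \overline{a_{ij}}\,z_1^iz_2^j=\overline{q(\overline{z_1},\overline{z_2})}.
\]
Since $S$ and $P$ commute, so do $S^*$ and $P^*$, and a direct computation using $(S^iP^j)^*=P^{*j}S^{*i}=S^{*i}P^{*j}$ gives $q(S,P)^*=\tilde q(S^*,P^*)$. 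Hence $q(S,P)=0$ if and only if $\tilde q(S^*,P^*)=0$.

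Next, from the explicit formula for $\tilde q$ one reads off that its zero set is the coordinatewise complex conjugate of $Z(q)$; that is,
\[
Z(\tilde q)=\{(\overline{z_1},\overline{z_2}) \, : \, (z_1,z_2)\in Z(q)\}.
\]
The key observation is that each of the sets $\G$, $b\Gamma$, $\partial\Gamma$, and $\pi(\mathbb{E}^2)$ is invariant under coordinatewise complex conjugation. This follows immediately from the descriptions $\G=\pi(\D^2)$, $b\Gamma=\pi(\T^2)$, $\pi(\mathbb{E}^2)=\pi(\mathbb{E}^2)$, and $\partial\Gamma=\Gamma\setminus\G$, combined with the fact that $\D$, $\T$, and $\mathbb{E}$ are each invariant under complex conjugation and that the symmetrization map $\pi(z_1,z_2)=(z_1+z_2,z_1z_2)$ has real coefficients and therefore commutes with conjugation.

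Combining these two observations yields both implications. If $q$ annihilates $(S,P)$ and is distinguished, then $\tilde q$ annihilates $(S^*,P^*)$ and
\[
Z(\tilde q)\subseteq \overline{\G\cup b\Gamma\cup\pi(\mathbb{E}^2)}=\G\cup b\Gamma\cup\pi(\mathbb{E}^2),
\]
so $\tilde q$ is distinguished. Likewise, if $q$ is $\Gamma$-distinguished, then $Z(\tilde q)\cap\G=\overline{Z(q)\cap\G}\neq\emptyset$ and
\[
Z(\tilde q)\cap\partial\Gamma=\overline{Z(q)\cap\partial\Gamma}=\overline{Z(q)\cap b\Gamma}=Z(\tilde q)\cap b\Gamma,
\]
so $\tilde q$ is $\Gamma$-distinguished. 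The reverse implications are obtained by applying the same reasoning with $(S^*,P^*)$ in place of $(S,P)$, using $\tilde{\tilde q}=q$. There is no real obstacle here: the proof is the observation that the defining conditions for both classes of polynomials are preserved by complex conjugation, which corresponds operator-theoretically to passing from a commuting pair to its adjoint pair.
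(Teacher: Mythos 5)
Your proposal is correct and follows essentially the same route as the paper: both pass to the conjugate polynomial $\widetilde q(z_1,z_2)=\sum \overline{a_{ij}}z_1^iz_2^j$, note that $q(S,P)=0$ iff $\widetilde q(S^*,P^*)=0$, and observe that $Z(\widetilde q)$ is the coordinatewise conjugate of $Z(q)$ while the sets $\mathbb{G}_2$, $b\Gamma$, $\partial\Gamma$ and $\pi(\mathbb{E}^2)$ are conjugation-invariant. You simply spell out the verifications that the paper leaves implicit.
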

	\begin{proof}
		Let $q(z_1, z_2)=\overset{n}{\underset{i=0}{\sum}}\overset{m}{\underset{j=0}{\sum}}a_{ij}z_1^iz_2^j$. Consider the polynomial $\widetilde{q}(z_1, z_2)=\overset{n}{\underset{i=0}{\sum}}\overset{m}{\underset{j=0}{\sum}}\overline{a_{ij}}z_1^iz_2^j$. Clearly, $Z(\widetilde{q})=\{(z_1, z_2) : \ (\overline{z}_1, \overline{z}_2) \in Z(q)\}$. Hence, $\widetilde{q}$ is distinguished or $\Gamma$-distinguished if and only if $q$ is distinguished or $\Gamma$-distinguished, respectively. Also, $q(S,P)=0$ if and only if $\widetilde{q}(S^*, P^*)=0$.
	\end{proof}
	
		\begin{lem}\label{lem8.7}
		Let $(S, P)$ be a $\Gamma$-contraction on a Hilbert space $\mathcal{H}$. Then $(S, P)$ dilates to $\Gamma$-distinguished $\Gamma$-unitary if and only if $(S^*, P^*)$ dilates to a $\Gamma$-distinguished $\Gamma$-unitary.
	\end{lem}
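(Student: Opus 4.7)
The plan is to show that the passage to adjoints maps $\Gamma$-distinguished $\Gamma$-unitary dilations of $(S,P)$ bijectively to those of $(S^{*},P^{*})$. Since the statement is symmetric in $(S,P)$ and $(S^{*},P^{*})$, it suffices to prove only one direction. I will use three ingredients: the invariance of $\Gamma$ and $b\Gamma$ under complex conjugation, the fact that $\Gamma$-distinguished polynomials are closed under conjugation of coefficients (Lemma \ref{5.3}), and the standard observation that taking adjoints of the compression formula preserves dilation.

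First I would assume $(S,P)$ has a $\Gamma$-distinguished $\Gamma$-unitary dilation $(N_{1},N_{2})$ on some $\mathcal{K}\supseteq\mathcal{H}$, with $p(N_{1},N_{2})=0$ for some $\Gamma$-distinguished polynomial $p$. Because $N_{1},N_{2}$ are commuting normals, $(N_{1}^{*},N_{2}^{*})$ is again a commuting pair of normal operators, and $\sigma_{T}(N_{1}^{*},N_{2}^{*})=\{(\overline{z}_{1},\overline{z}_{2})\,:\,(z_{1},z_{2})\in\sigma_{T}(N_{1},N_{2})\}$. Since $b\Gamma=\{(z_{1}+z_{2},z_{1}z_{2}):|z_{1}|=|z_{2}|=1\}$ is manifestly invariant under complex conjugation, we conclude $\sigma_{T}(N_{1}^{*},N_{2}^{*})\subseteq b\Gamma$, so $(N_{1}^{*},N_{2}^{*})$ is a $\Gamma$-unitary.

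Next, for a polynomial $q(z_{1},z_{2})=\sum a_{ij}z_{1}^{i}z_{2}^{j}$, write $\widetilde{q}(z_{1},z_{2})=\sum\overline{a_{ij}}z_{1}^{i}z_{2}^{j}$. By the proof of Lemma \ref{5.3}, $\widetilde{p}$ is $\Gamma$-distinguished whenever $p$ is, and the identity $\widetilde{p}(N_{1}^{*},N_{2}^{*})=p(N_{1},N_{2})^{*}=0$ shows that $(N_{1}^{*},N_{2}^{*})$ is annihilated by the $\Gamma$-distinguished polynomial $\widetilde{p}$. Hence $(N_{1}^{*},N_{2}^{*})$ is a $\Gamma$-distinguished $\Gamma$-unitary.

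Finally, I verify the dilation relation. For every polynomial $q\in\mathbb{C}[z_{1},z_{2}]$, the hypothesis $q(S,P)=P_{\mathcal{H}}q(N_{1},N_{2})|_{\mathcal{H}}$ yields, on taking adjoints of both sides and restricting to $\mathcal{H}$, the equality
\[
\widetilde{q}(S^{*},P^{*})=q(S,P)^{*}=\bigl(P_{\mathcal{H}}q(N_{1},N_{2})|_{\mathcal{H}}\bigr)^{*}=P_{\mathcal{H}}\,\widetilde{q}(N_{1}^{*},N_{2}^{*})\,|_{\mathcal{H}}.
\]
As $q$ ranges over all polynomials, so does $\widetilde{q}$; therefore the compression formula holds for every polynomial applied to $(N_{1}^{*},N_{2}^{*})$, and by Proposition \ref{basicprop:01} applied to the polynomially convex set $\Gamma$, $(N_{1}^{*},N_{2}^{*})$ is a $\Gamma$-unitary dilation of $(S^{*},P^{*})$. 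Combining this with the preceding paragraph proves one direction; the reverse direction is obtained by replacing $(S,P)$ with $(S^{*},P^{*})$, since $(S^{**},P^{**})=(S,P)$. I do not anticipate a serious obstacle: the only mild point is invoking Proposition \ref{basicprop:01} to pass from polynomial compression to rational compression, but this is immediate from polynomial convexity of $\Gamma$.
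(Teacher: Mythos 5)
Your proposal is correct and follows essentially the same route as the paper: pass to adjoints, note that $(U_1^*,U_2^*)$ dilates $(S^*,P^*)$, and use the conjugate-coefficient polynomial $\widetilde{p}$ (the content of Lemma \ref{5.3}) to see that the adjoint pair is still a $\Gamma$-distinguished $\Gamma$-unitary. The paper's proof simply cites Lemma \ref{5.3} and states the adjoint dilation relation without the explicit verification you supply, so yours is just a more detailed version of the same argument.
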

	
	\begin{proof}
		It suffices to prove the necessary part. Assume that $(S, P)$ has a dilation to a $\Gamma$-distinguished $\Gamma$-unitary $(U_1, U_2)$ on $\mathcal{K}$ containing $\mathcal{H}$. Then the $\Gamma$-unitary $(U_1^*, U_2^*)$ dilates $(S^*, P^*)$ and  it follows from Lemma \ref{5.3} that $(U_1^*, U_2^*)$ is $\Gamma$-distinguished. The proof is complete.
	\end{proof}
	\begin{thm}\label{thm8.8}
		Let $(S,P)$ be a $\Gamma$-contraction on a Hilbert space $\mathcal{H}$ such that $(S^*, P^*)$ is pure, and suppose that dim $\mathcal{D}_{P}< \infty$. Let $F$ be the fundamental operator of $(S,P)$ such that $\omega(F)<1$. Then $(S, P)$ dilates to a $\Gamma$-distinguished $\Gamma$-unitary.
	\end{thm}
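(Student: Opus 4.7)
The plan is to construct a $\Gamma$-distinguished $\Gamma$-unitary dilation by first descending to the adjoint pair $(S^*, P^*)$, exploiting its purity to realize a canonical $\Gamma$-isometric dilation, and then extending that to a $\Gamma$-unitary. Since $(S^*, P^*)$ is a pure $\Gamma$-contraction whose adjoint $(S,P)$ has fundamental operator $F$, Theorem \ref{thm:2.12} applied to $(S^*, P^*)$ exhibits the pair $(T_{F^*+Fz}, T_z)$ on $H^2(\mathcal{D}_P)$ as a minimal $\Gamma$-isometric dilation of $(S^*, P^*)$; in particular $\HS$ embeds isometrically in $H^2(\mathcal{D}_P)$ with $S = T_{F^*+Fz}^*|_\HS$ and $P = T_z^*|_\HS$.

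The next step is to verify that this concrete $\Gamma$-isometry is $\Gamma$-distinguished. It is pure (since $T_z^{*n} \to 0$ strongly on $H^2(\mathcal{D}_P)$) and satisfies $\dim \mathcal{D}_{T_z^*} = \dim \mathcal{D}_P < \infty$. Comparing its symbol $\phi(z) = F^* + Fz$ with the canonical model $\phi(z) = F_*^* + F_*z$ from Theorem \ref{thm:modelpure} identifies $F$ as the fundamental operator of the adjoint $(T_{F^*+Fz}^*, T_z^*)$. The hypothesis $\omega(F) < 1$ forces $r(F) \leq \omega(F) < 1$, so condition (3) of Theorem \ref{r(A)<1_(S,P)_distt.} is met and $(T_{F^*+Fz}, T_z)$ is a $\Gamma$-distinguished pure $\Gamma$-isometry.

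To finish, Proposition \ref{prop8.1} supplies a $\Gamma$-distinguished $\Gamma$-unitary extension $(U_1, U_2)$ of $(T_{F^*+Fz}, T_z)$ on some Hilbert space $\mathcal{K} \supseteq H^2(\mathcal{D}_P) \supseteq \HS$. Because an extension is in particular a dilation, and the composition of a dilation with an extension is again a dilation, $(U_1, U_2)$ dilates $(S^*, P^*)$. Lemma \ref{lem8.7} then transfers this to a $\Gamma$-distinguished $\Gamma$-unitary dilation of $(S, P)$, completing the argument. The only point requiring genuine care is the identification in the second step of $F$ as the fundamental operator of $(T_{F^*+Fz}^*, T_z^*)$; this, however, is a direct reading of the canonical model and presents no real obstacle, so the main work of the proof is the assembly of the above pieces rather than any new estimate.
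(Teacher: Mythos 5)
Your proposal is correct and follows essentially the same route as the paper: realize the canonical Toeplitz pair $(T_{F^*+Fz}, T_z)$ on $H^2(\mathcal{D}_P)$ as a $\Gamma$-isometric dilation of $(S^*,P^*)$ via Theorem \ref{thm:2.12}, show it is $\Gamma$-distinguished, extend to a $\Gamma$-distinguished $\Gamma$-unitary by Proposition \ref{prop8.1}, and pass back to $(S,P)$ by Lemma \ref{lem8.7}. The only (harmless) variation is in the middle step, where you identify $F$ as the fundamental operator of $(T_{F^*+Fz}^*, T_z^*)$ and invoke Theorem \ref{r(A)<1_(S,P)_distt.} with $r(F)\leq\omega(F)<1$, whereas the paper applies the von Neumann inequality of Theorem \ref{PalShalit4.5} to kill the determinantal polynomial directly.
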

	\begin{proof}
		Applying Theorem \ref{thm:2.12} to $(S^*, P^*)$, we have that $\mathcal{H} \subseteq H^2(\mathcal{D}_P)$ and that
		$
		S=T_\phi^*|_\mathcal{H}$ and $P=T_z^*|_\mathcal{H}$, where $\phi(z)=F^*+Fz$ and $F$ is the fundamental operator of $(S, P)$. Moreover, $F$ is a matrix since $\mathcal{D}_P$ is finite-dimensional. Note that the pair $(T_\phi, T_z)$ acting on $H^2(\mathcal{D}_P)$ is a $\Gamma$-isometric dilation of $(S^*, P^*)$. We show that $(T_\phi, T_z)$ is $\Gamma$-distinguished. Define 
		\[
		\Lambda :=\{(z_1, z_2) \in \Gamma: det(F^*+z_2F-z_1I)=0\}.
		\]	
		Let $f$ be any matrix-valued polynomial. Then it follows from the proof of Theorem 4.5 in \cite{PalShalit1} that 
		\begin{equation}\label{eqn8.1}
			\|f(T_\phi^*, T_z^*)\| \leq \max\{|f(z_1, z_2)|: (z_1, z_2) \in \Lambda \cap b\Gamma\}.
		\end{equation}
		As $\omega(F)<1$, Theorem \ref{PalShalit4.5} implies that  
		$
		q(z_1, z_2)=det(F^*+z_2F-z_1I)
		$
		is a $\Gamma$-distinguished polynomial.	By (\ref{eqn8.1}), we have 
		$
		\|q(T_\phi^*, T_z^*)\| \leq \max\left\{|q(z_1, z_2)|: (z_1, z_2) \in \Lambda \cap b\Gamma\right\}=0$.	Hence, $(T_\phi^*, T_z^*)$ is $\Gamma$-distinguished. By Lemma \ref{5.3}, $(T_\phi, T_z)$ is $\Gamma$-distinguished too. Therefore, $(S^*, P^*)$ dilates to the $\Gamma$-distinguished $\Gamma$-isometry $(T_\phi, T_z)$. By Proposition \ref{prop8.1}, we have that $(T_\phi, T_z)$ has an extension to a $\Gamma$-distinguished $\Gamma$-unitary. Putting everything together, we have that $(S^*, P^*)$ has a $\Gamma$-distinguished $\Gamma$-unitary dilation. The desired conclusion follows from Lemma \ref{lem8.7}.
	\end{proof}
	
	The following two corollaries are consequences of Lemma \ref{lem8.7} and Theorem \ref{thm8.8}.
	
		\begin{cor}
		Let $(S,P)$ be a $\Gamma$-contraction on a Hilbert space $\mathcal{H}$ such that $(S, P)$ is pure, and suppose that dim $\mathcal{D}_{P^*}< \infty$. Let $F_*$ be the fundamental operator of $(S^*,P^*)$ such that $\omega(F_*)<1$. Then $(S, P)$ dilates to a $\Gamma$-distinguished $\Gamma$-unitary.
	\end{cor}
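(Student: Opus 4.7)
The plan is to apply Theorem \ref{thm8.8} not to $(S, P)$ directly but to its adjoint pair $(S^*, P^*)$, and then transfer the conclusion back to $(S, P)$ using Lemma \ref{lem8.7}. The key observation is that the hypotheses of the corollary have been deliberately set up so that $(S^*, P^*)$ satisfies the hypotheses of Theorem \ref{thm8.8}.

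First I would verify that $(S^*, P^*)$ is itself a $\Gamma$-contraction. This follows from the characterization of $\Gamma$-contractions: since $\Gamma$ is closed under the involution $(s, p) \mapsto (\bar{s}, \bar{p})$, for every polynomial $q \in \C[z_1, z_2]$ and its conjugate $\widetilde q$ (as in Lemma \ref{5.3}), the von Neumann inequality for $(S^*, P^*)$ is equivalent to that for $(S, P)$. Alternatively, one can observe that if $(S, P)$ admits a normal $b\Gamma$-dilation $(U_1, U_2)$, then $(U_1^*, U_2^*)$ is a normal $b\Gamma$-dilation of $(S^*, P^*)$.

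Next I would check the remaining hypotheses of Theorem \ref{thm8.8} applied to $(S^*, P^*)$. Writing Theorem \ref{thm8.8} with $(S^*, P^*)$ in the role of ``$(S,P)$'', one requires that $((S^*)^*, (P^*)^*) = (S, P)$ is pure, which holds by hypothesis; that $\dim \mathcal{D}_{P^*} < \infty$, which holds by hypothesis (note that here $P^*$ plays the role of ``$P$'' in Theorem \ref{thm8.8}, so the relevant defect space is $\mathcal D_{P^*}$); and that the fundamental operator of $(S^*, P^*)$ has numerical radius strictly less than $1$. By definition this fundamental operator is exactly $F_*$, and $\omega(F_*)<1$ holds by hypothesis.

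Therefore Theorem \ref{thm8.8} applies to $(S^*, P^*)$ and produces a $\Gamma$-distinguished $\Gamma$-unitary $(U_1, U_2)$ that dilates $(S^*, P^*)$. Invoking Lemma \ref{lem8.7}, which states that $(S, P)$ dilates to a $\Gamma$-distinguished $\Gamma$-unitary if and only if $(S^*, P^*)$ does, we conclude that $(S, P)$ admits a $\Gamma$-distinguished $\Gamma$-unitary dilation. There is no real obstacle here; the corollary is essentially a bookkeeping exercise to rephrase Theorem \ref{thm8.8} under the ``dual'' hypotheses (pureness of $(S,P)$ rather than $(S^*,P^*)$, finite-dimensionality of $\mathcal D_{P^*}$ rather than $\mathcal D_P$, and the fundamental operator of $(S^*,P^*)$ rather than that of $(S,P)$).
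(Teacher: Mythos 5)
Your proposal is correct and follows exactly the route the paper intends: the paper states the corollary is ``a direct consequence of Lemma \ref{lem8.7} and Theorem \ref{thm8.8},'' i.e.\ apply Theorem \ref{thm8.8} to $(S^*,P^*)$ and transfer back via Lemma \ref{lem8.7}. Your additional verification that $(S^*,P^*)$ is a $\Gamma$-contraction and that the hypotheses match up under the roles swap is accurate bookkeeping that the paper leaves implicit.
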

		
	\begin{cor}
		Every strict $\Gamma$-contraction on a finite-dimensional Hilbert space admits a $\Gamma$-distinguished $\Gamma$-unitary dilation.
	\end{cor}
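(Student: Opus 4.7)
The plan is to deduce this corollary as a direct application of Theorem \ref{thm8.8}, so the work reduces to verifying that all of its hypotheses are satisfied by a strict $\Gamma$-contraction $(S, P)$ acting on a finite-dimensional Hilbert space $\mathcal{H}$.

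First I would recall the two structural properties of strict $\Gamma$-contractions mentioned just before Proposition \ref{strict}: namely, (a) $P$ is a strict contraction, so $\|P\| < 1$, and (b) the fundamental operator $F$ of $(S,P)$ satisfies $\omega(F) < 1$. Property (b) immediately gives one of the hypotheses of Theorem \ref{thm8.8}. The finite-dimensionality of $\mathcal{H}$ trivially yields $\dim \mathcal{D}_P < \infty$, since $\mathcal{D}_P \subseteq \mathcal{H}$.

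Next I would check that $(S^*, P^*)$ is a pure $\Gamma$-contraction, i.e. that $(P^*)^{*n} = P^n \to 0$ strongly as $n \to \infty$. This follows at once from $\|P\| < 1$, since $\|P^n\| \leq \|P\|^n \to 0$, so convergence holds even in operator norm. Thus all three hypotheses of Theorem \ref{thm8.8} $(i)$ $(S, P)$ is a $\Gamma$-contraction, $(ii)$ $(S^*, P^*)$ is pure, $(iii)$ $\dim \mathcal{D}_P < \infty$ and $\omega(F) < 1$ are in place, and Theorem \ref{thm8.8} yields a $\Gamma$-distinguished $\Gamma$-unitary dilation of $(S, P)$.

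There is no real obstacle here; the argument is essentially a direct invocation of Theorem \ref{thm8.8} once the properties of strict $\Gamma$-contractions recorded in Section \ref{G_distt._pure_G_contractions} are combined with the finite dimension of $\mathcal{H}$. The only mild subtlety to flag is that purity is required of the \emph{adjoint} pair $(S^*, P^*)$ in the hypothesis of Theorem \ref{thm8.8}, which is precisely the condition $P^n \to 0$ strongly rather than $P^{*n} \to 0$; this is what makes the strict-contraction condition on $P$ (not on $P^*$) sufficient.
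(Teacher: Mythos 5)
Your proposal is correct and follows essentially the same route as the paper: both reduce the corollary to Theorem \ref{thm8.8} by checking that a strict $\Gamma$-contraction on a finite-dimensional space satisfies $\dim\mathcal{D}_P<\infty$, $\omega(F)<1$, and purity of $(S^*,P^*)$. The only cosmetic difference is that you derive the purity of $(S^*,P^*)$ directly from $\|P\|<1$, whereas the paper cites Lemma 4.2 and Proposition 4.3 of \cite{PalShalit1} for that fact; your handling of the adjoint in the purity condition is exactly right.
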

	
	\begin{proof}
		Let $(S, P)$ be a strict $\Gamma$-contraction on a finite-dimensional space $\mathcal{H}$ and let $F$ be the fundamental operator of $(S, P)$. By Lemma 4.2 in \cite{PalShalit1} and Proposition 4.3 \cite{PalShalit1}, we have that $(S^*, P^*)$ is a pure $\Gamma$-contraction and $\omega(F)<1$. It follows from Theorem \ref{thm8.8} that $(S, P)$ dilates to a $\Gamma$-distinguished $\Gamma$-unitary.		
	\end{proof}
	We conclude this article here. A sequel of this article dealing more with minimal dilation of toral contractions and $\Gamma$-distinguished $\Gamma$-contractions will appear soon.

	\end{document}